\numberwithin{equation}{section} 
\newtheorem{theorem}{Theorem}[section]
\newtheorem{remark}[theorem]{Remark}
\newtheorem{lemma}[theorem]{Lemma}
\newtheorem{definition}[theorem]{Definition}
\newcommand\1{{\ensuremath {\mathds 1} }} 
\newcommand\CC{{\mathbb C}}
\newcommand\N{{\mathbb N}}
\newcommand\R{{\mathbb R}}
\newcommand\Z{{\mathbb Z}}
\newcommand\bk{{\bf k}}
\newcommand\bK{{\bf K}}
\newcommand\br{{\bf r}}
\newcommand\by{{\bf y}}
\newcommand\bz{{\bf z}}
\newcommand\bnull{{\bf 0}}
\newcommand\rd{{\mathrm{d}}}
\newcommand\re{{\mathrm{e}}}
\newcommand\ri{{\mathrm{i}}}
\newcommand\cB{{\mathcal B}}
\newcommand\cC{{\mathcal C}}
\newcommand\cD{{\mathcal D}}
\newcommand\cE{{\mathcal E}}
\newcommand\cM{{\mathcal M}}
\newcommand\cN{{\mathcal N}}
\newcommand\cR{{\mathcal R}}
\newcommand\cS{{\mathcal S}}
\newcommand\cT{{\mathcal T}}
\newcommand\cV{{\mathcal V}}
\newcommand\cZ{{\mathcal Z}}
\newcommand\fS{{\mathfrak S}}
\newcommand{\sB}{\mathscr{B}}
\newcommand{\sC}{\mathscr{C}}
\newcommand\Tr{{\rm Tr}}
\newcommand\VTr{ \underline{ \rm Tr \,}}
\newcommand{\bra}{\langle}
\newcommand{\ket}{\rangle}
\newcommand{\BZ}{{\Gamma^\ast}} 
\newcommand{\WS}{\Gamma} 
\newcommand{\RLat}{\mathcal{R}^*} 
\newcommand{\Lat}{\mathcal{R}} 
\renewcommand{\div}{\rm div}
\newcommand{\per}{\mathrm{per}}
\newcommand\sqw{\hbox{\rlap{\leavevmode\raise.3ex\hbox{$\sqcap$}}$%
\sqcup$}}
\newcommand\cqfd{\ifmmode\sqw\else{\ifhmode\unskip\fi\nobreak\hfil
\penalty50\hskip1em\null\nobreak\hfil\sqw
\parfillskip=0pt\finalhyphendemerits=0\endgraf}\fi}
\renewcommand{\eqref}[1]{(\ref{#1})}
\renewcommand{\Re}{{\rm Re}}
\renewcommand{\Im}{{\rm Im}}
\DeclareMathOperator{\dist}{dist}
\DeclareMathOperator{\Spec}{Spec}
\newcommand{\eps}{\varepsilon}
\renewcommand{\BZ}{{\mathcal B}}
\newcommand{\fermi}{{\eps_F}}
\def\<{\langle}
\def\>{\rangle}
\newcommand{\rme}{\mathrm{e}}
\renewcommand{\leq}{\leqslant}
\renewcommand{\le}{\leqslant}
\renewcommand{\geq}{\geqslant}
\renewcommand{\ge}{\geqslant}
\newcommand{\dps}{\displaystyle}
\newcommand{\ie}{{\sl i.\,e.\ }}   
\newcommand{\REV}[1]{{#1}} 
\newcommand{\old}[1]{}
\tikzset{cross/.style={cross out, draw=black, minimum size=2*(#1-\pgflinewidth), inner sep=0pt, outer sep=0pt},cross/.default={3pt}}
\title{Numerical quadrature in the Brillouin zone \\ for periodic Schr\"odinger operators}
\author{\'Eric Canc\`es, Virginie Ehrlacher, David Gontier, Antoine Levitt, Damiano Lombardi}
\date{\today}
\begin{document}

\begin{abstract}
As a consequence of Bloch's theorem, the numerical computation of the 
fermionic ground state density matrices and energies of periodic
Schr\"odinger operators involves integrals over the Brillouin zone. These integrals are difficult to compute
numerically in metals due to discontinuities in the integrand. We
perform an error analysis of several widely-used quadrature rules
and smearing methods for Brillouin zone integration. We precisely identify the assumptions implicit in these
methods and rigorously prove error bounds. Numerical results for two-dimensional periodic systems are also provided.
Our results shed light on
the properties of these numerical schemes, and provide guidance as to the
appropriate choice of numerical parameters.
\end{abstract}

\maketitle

\section{Introduction}
The computation of the electronic properties of a $d$-dimensional perfect crystal in a
mean-field setting (e.g. Kohn-Sham density functional theory) formally requires to solve a periodic problem with infinitely many electrons.
In practice, a truncation to a finite supercell composed of $L^d$
crystal unit cells with periodic boundary conditions is necessary for the actual
computation, and $L$ is increased until an acceptable accuracy is
achieved. Bloch's theorem allows for a tremendous reduction of
computational costs by an explicit block-diagonalization of the
Hamiltonian operator, transforming an electronic problem for $L^d N$
one-body wave functions, where $N$ is the number of electron pairs per unit cell, to
$L^d$ electronic problems for $N$ one-body wave functions. In the infinite-$L$
limit, the theorem states that properties of the perfect crystal can be
obtained as an integral over the Brillouin zone (a $d$-dimensional torus) 
of properties of a parametrized system of $N$ electron pairs. The
truncation to a supercell of $L^{d}$ unit cells can then be seen as a
numerical quadrature of this integral. This leads to the famous Monkhorst-Pack numerical scheme~\cite{monkhorst1976special}.

\medskip

Mathematically, the natural question is that of the speed of
convergence of a given electronic property as $L \to \infty$. There appears
a distinction between \textit{insulators}, characterized by a band
gap, and \textit{metals}, with no band gap. In a sense that will be
made precise later, electrons are localized in insulators, but
delocalized in metals. Accordingly, the convergence of electronic
properties is much faster for insulators than for metals. This
translates to quantities of interest being very smooth across the
Brillouin zone in insulators, so that the quadrature is very
efficient: see for instance \cite{Gontier2016_M2AN}, which
proves the exponential convergence with $L$ of a number of properties of
interest for insulators in the reduced Hartree-Fock model. In
this paper, we aim to extend these results to the case of metals,
under natural genericity assumptions on the band structure at the
Fermi level (see Assumptions 1 and 2 in Section \ref{sec:assumptions}). To
reduce the technical content of the paper, we limit ourselves to the
case of independent electrons modeled by a single-particle Hamiltonian
$H=-\frac 12 \Delta + V$ on $L^2(\R^d)$, where $V$ is a given (non
self-consistent) periodic potential.

\medskip

For metals, because of the absence of a band gap, quantities of
interest are discontinuous when the electronic bands $\eps_{n\bk}$
cross the Fermi level $\eps_{F}$, and specific quadrature rules have
to be used to locate this singular set (the \textit{Fermi surface})
and recover an acceptable convergence speed. 
In the simple case when the Fermi level intersects a single isolated band $\BZ \ni \bk \mapsto f(\bk):=\eps_{n_0,\bk} \in \R$ (which can be the case for a metal with $2n_0+1$ electrons per unit cell), the problem of computing the ground-state energy boils down to evaluating
$$
E:=E(\eps_{\rm F})
$$
where the function $\R \ni \eps \mapsto E(\eps) \in \R$ is defined by 
\begin{equation}\label{eq:single_band_1}
E(\eps) = \int_{\BZ}  f(\bk) \, \1(f(\bk) \le \eps) \, d\bk
\end{equation}
and the Fermi level $\eps_{\rm F}$ by the constraint
\begin{equation}\label{eq:single_band_2}
\frac{1}{|\BZ|} \int_{\BZ}   \1(f(\bk) \le \eps_{\rm F}) \, d\bk = \frac 12.
\end{equation}
The Fermi surface then is the level set $\{\bk \in \BZ \, | \, f(\bk) = \eps_{\rm F}\}$ of the function $f$, and the Fermi level is chosen such that the volume of the set $\Omega:=\{\bk \in \BZ \, | \, f(\bk) < \eps_{\rm F}\}$ is half the one of the Brillouin zone. Similar quadrature problems are encountered in the level set method introduced by Osher and Sethian~\cite{OsherSethian1988}. However, the Brillouin zone integration problem encountered in electronic structure calculation has some specificities. First, for one of the most important quantities of interest, namely the ground-state energy, the function to be integrated on $\Omega$ is precisely the level set function $f$ (see \eqref{eq:single_band_1}), which requires a specific analysis. Second, the shape of the Fermi surface can be very complicated for real materials, and the required accuracy is much higher than in standard applications of the level set methods, where linear approximations of the boundary of $\Omega$ from a fixed uniform grid are usually sufficient~\cite{Osher2001,Sethian1999}. Additional technical difficulties appear when the Fermi level intersects several bands, and when the quantity of interest is not the ground-state energy, but some observable involving the Bloch eigenfunctions of $H$ and non only the Bloch eigenvalues $\eps_{n\bk}$, such as e.g. the ground-state density.

\medskip

The most famous Brillouin zone integration method is
the linear tetrahedron method and its improved version by
Bl\"ochl~\cite{Blochl1994} (the Bl\"ochl scheme is not covered
by the results in this paper, and we plan to investigate it in a
forthcoming paper). Other numerical quadratures have been
proposed~\cite{morgan2018efficiency, pickard1999extrapolative} (see
also~\cite{henk2001integration} for an adaptive numerical scheme). In
this paper, we study these quadrature rules, and prove that an
interpolation of quantities of interest to order $p$ coupled to a
reconstruction of the Fermi surface with a method of order $q$ leads,
in general, to a total error of order $L^{-(\min(p+1,q+1))}$: this is
the content of Theorem \ref{thm:err_A}. On the other hand, the error
made on the ground state energy is, to leading order, proportional to
the error made on the number of electrons, which is kept fixed by
varying the Fermi level: therefore, the energy is less sensitive to
the location of the Fermi surface, and the leading order contribution
to the error vanishes, leaving a total error of order
$L^{-(\min(p+1,2q+2))}$.


\medskip

Another way to improve the convergence, and the most widely used method to
compute properties of metals, is the \textit{smearing} method~\cite{methfessel1989high}. This amounts
to regularizing the discontinuities of the occupation
numbers, restoring smoothness across the Brillouin zone. The smearing
parameter $\sigma > 0$, which has the dimension of an energy, should be chosen small enough so that it does
not change the properties of interest too much, but large enough so
that the quadrature is efficient. In numerical codes, this choice is left to
the users, who must use their expertise to select an appropriate value
for the parameter $\sigma$. This is a complex task, and rules of thumb
provide suboptimal choices of $\sigma$.

\medskip

We show in this paper that, up to sub-exponential factors, the total
error for a given smearing parameter $\sigma$ and supercell of size
$L$ is bounded by $C(\sigma^{p+1} + \re^{-\eta \sigma L})$ for some
$C \in \R_+$ and $\eta > 0$, where $p \geq 0$ is the order of the
smearing method used (Theorem \ref{esigmaL-e}). This leads to the
conclusion that $\sigma$ should in practice be varied at the same time
as $L$ to balance the two sources of error. We also investigate the
precise convergence with respect to $L$ at $\sigma > 0$ fixed, and
find the surprising result that, while the convergence is exponential
when the Fermi-Dirac smearing is used, it is super-exponential
(bounded by $C\re^{-\eta L^{4/3}}$) when Gaussian-based smearing
methods are used, due to the different complex-analytic properties of
these functions. Such a
phenomenon has already been observed in
\cite{suryanarayana2013spectral}, in the context of the locality of
the density matrix of metals with Gaussian smearing.

\medskip

The structure of the paper is as follows. We introduce our notation
and recall the basic properties of the periodic Schr\"odinger operator 
$H = - \frac 12 \Delta + V$ in
Section~\ref{sec:notations}. We carefully study the band structure
of this operator in the vicinity of the Fermi level in Section \ref{sec:geometry}. We analyze
interpolation methods in Section~\ref{sec:interp} and smearing methods in
Section~\ref{sec:smearing-methods}. Technical results on the complex-analytic
properties of the integrand in smearing methods are proved in
Appendix \ref{sec:appendixFanalytic}. 
Two-dimensional numerical results illustrating our theoretical results
are presented in Section~\ref{sec:resnum}. Some tests are also given
where our assumptions on the band structure are violated (in the
presence of a van Hove singularity or an eigenvalue crossing at the
Fermi level).

\section{Notation and model}
\label{sec:notations}
In this section, we set our notation, and define the different quantities of interest.

\medskip

Let $d \in \{1,2,3\}$ denote the dimension of the crystal, and $\Lat \subset \R^d$ the crystalline lattice. We denote
by $\RLat$ the dual (or reciprocal) lattice, by $\WS$ the fundamental unit cell of $\Lat$, and we let $\BZ$ be either the first Brillouin
zone, or the fundamental unit cell of $\RLat$. Our results being independent of this choice, we will call $\cB$ ``the Brillouin zone'' for
simplicity. The periodicities in $\Lat$ and $\RLat$ equip the sets $\WS$ and $\BZ$ with the topology of a $d$-dimensional torus.

\medskip

Throughout this paper, $\cC^k(E,F)$ denotes the usual class of $k$ times continuously differentiable functions from $E$ to $F$, and
$L^p(\R^d)$ (resp. $H^s(\R^d)$) denotes the usual Lebesgue (resp. Sobolev) space on $\R^d$, while $L^p_\per$ (resp. $H^s_\per$) denote
spaces of $\Lat$-periodic complex-valued functions on the torus $\WS$. In particular, the space $L^2_\per$ is a Hilbert space when endowed with its natural
inner product
\begin{equation*}
\forall f,g \in L^2_\per, \quad \bra f,g \ket := \int_\WS \overline{f}(\br) g(\br) \rd \br.
\end{equation*}

\medskip

We use the notation $\sB := \sB(L^2_\per)$ to denote the space of bounded operators on $L^2_\per$, and $\fS_p := \fS_p(L^2_\per)$ to denote the Schatten class of compact operators on $L^2_\per$ with finite norms $\| A \|_{\fS_p} := \left( \Tr_{L^2_\per} | A |^p \right)^{1/p}$. In particular, $\fS_1$ is the space of trace-class operators on $L^2_\per$, while $\fS_2$ is the space of Hilbert-Schmidt operators on $L^2_\per$. We finally introduce, for $s \ge 0$, the space
$$
\fS_{1,s} := \left\{ \gamma \in \fS_1 \; | \; \gamma^\ast=\gamma, \; (1-\Delta)^{s/2} \gamma (1-\Delta)^{s/2} \in \fS_1 \right\},
$$
which we endow with the norm 
$$
\|\gamma\|_{\fS_{1,s}} := \| (1-\Delta)^{s/2} \gamma (1-\Delta)^{s/2} \|_{\fS_1}.
$$

\medskip

Let $\mathcal O$ be an operator acting on $L^2(\R^d)$ which commutes with $\Lat$-translations. Thanks to the Bloch-Floquet transform $\cZ$
\cite[Chapter XIII]{ReedSimon4}, we have the decomposition
\begin{equation*}
\cZ^* \mathcal O \cZ = \fint_\BZ^\oplus \mathcal O_\bk \rd \bk,
\end{equation*}
where we denote by $\fint_\BZ := \frac{1}{|\BZ|} \int_\BZ$, and each
of the operators $\mathcal O_\bk$, the Bloch fibers of $\mathcal O$,
acts on $L^{2}_{\per}$. If $\mathcal O$ is locally trace-class, its
{\em trace per unit cell} is then given by
\begin{equation*}
\VTr(\mathcal O) := \fint_\BZ \Tr_{L^2_\per} \left( \mathcal O_\bk \right) \rd \bk.
\end{equation*}

\medskip

We consider the one-body electronic Hamiltonian
\begin{equation*}
H := -\frac12 \Delta + V \quad \textrm{acting on} \quad L^2(\R^d),
\end{equation*}
where $V \in L^\infty_\per(\R^d)$ 
is a real-valued $\Lat$-periodic potential. This hypothesis could be relaxed and a more
general class of potentials could be considered; this choice
simplifies some technical proofs in the Appendix
by ensuring that $V$ is bounded as an
operator on $L^{2}_{\per}$.
It is well-known that $H$ is a bounded from below self-adjoint operator on $L^2(\R^d)$ with domain $H^2(\R^d)$, whose
spectrum is purely absolutely continuous (see e.g. \cite[Theorem~XIII.100]{ReedSimon4}). Since the
operator $H$ commutes with $\Lat$-translations, we can consider its
Bloch-Floquet transform. For $\bk \in \BZ$, the fiber $H_{\bk}$ is
given by
\begin{equation} \label{eq:def:Hk}
H_\bk = \frac12 \left(-\ri \nabla + \bk \right)^2 + V = - \frac12 \Delta - \ri \bk
\cdot \nabla + \frac{\bk^2}{2} + V ,
\end{equation}
which is a self-adjoint operator on $L^2_\per$ with domain $H^2_\per$.  It is bounded from
below, and with compact resolvent. We denote by
$\varepsilon_{1 \bk} \leq \varepsilon_{2 \bk} \leq \cdots$ its eigenvalues ranked in
increasing order, counting multiplicities, and by
$\left( u_{n \bk}\right)_{n \in \N^*} \in \left( H^2_\per \right)^{\N^*}$ a corresponding
$L^2_\per$-orthonormal basis of eigenvectors, so that
\begin{equation*}
H_\bk u_{n \bk} = \varepsilon_{n \bk} u_{n \bk}, \quad \bra u_{n \bk}, u_{m \bk} \ket = \delta_{nm}.
\end{equation*}
Seeing $H_\bk$ as a bounded perturbation of the operator $\frac12 \left(-\ri \nabla + \bk \right)^2$, standard
min-max arguments show that there exist
$\underline{C_{1}}, \overline{C_{1}} \in \R, \underline{C_{2}},
\overline{C_{2}} > 0$ such that
\begin{align}
\label{eq:comparison_eigenvalues}
\underline{C_{1}} + \underline{C_{2}} n^{2/d} \leq \eps_{n\bk} \leq \overline{C_{1}} + \overline{C_{2}} n^{2/d}.
\end{align}

Let us now introduce several physical observables. A fundamental quantity in our
study is the {\em one-body density matrix} at level $\eps \in \R$,
which is the bounded non-negative self-adjoint operator acting on $L^2(\R^{d})$ and defined by
\begin{equation*}
\gamma(\varepsilon) := \1 ( H \leq \varepsilon).
\end{equation*}
Its Bloch-Floquet decomposition is simply
\begin{equation*}
\gamma_\bk(\varepsilon) :=  \1 ( H_\bk \leq \varepsilon) =  \sum_{n \in \N^*} \1(\varepsilon_{n\bk} \leq \varepsilon) |u_{n\bk} \ket \bra u_{n \bk} |.
\end{equation*}
The {\em integrated density of states}  is the function $\cN$ from $\R$ to $\R_+$ defined by
\begin{equation} \label{eq:IDoS}
\forall \varepsilon \in \R, \quad \cN(\varepsilon) := \VTr ( \gamma(\varepsilon) ) = \sum_{n \in \N^*}
\fint_\BZ \1 (\varepsilon_{n \bk} \leq \varepsilon) \rd \bk.
\end{equation}
The function $\cN$ is a non-decreasing continuous function, with $\cN(-\infty) = 0$ and $\cN(+\infty) = +\infty$. In particular, if $N$
denotes the number of electron pairs per unit cell, then $\cN^{-1}(\{N\})$ is a non-empty interval, of the form $[\varepsilon_-,
\varepsilon_+]$. If $\varepsilon_- < \varepsilon_+$, the system is an \textit{insulator}. In this case, supercell methods are very efficient
to compute numerically the properties of the crystals (see for instance~\cite{monkhorst1976special,Gontier2016_M2AN}). In this article, we focus on the
\textit{metallic} case $\varepsilon_- = \varepsilon_+$. In this case, the {\em Fermi level} of the system is the unique number
$\varepsilon_F := \varepsilon_- = \varepsilon_+$.

\medskip

We then introduce the {\em integrated density of energy} $E:\R \to \R$ defined by
\begin{equation}
\label{eq:IDoE}
E(\varepsilon) := \VTr(H \gamma(\varepsilon)) = \sum_{n \in \N^*}  \fint_{\BZ} \varepsilon_{n\bk} \1(\varepsilon_{n\bk} \le \varepsilon) \rd \bk,
\end{equation}
and the (zero-temperature) {\em ground state energy} (per unit cell) $E := E(\varepsilon_F)$.
Finally, the {\em electronic density} up to level $\varepsilon$ is defined as the density of the locally trace-class $\cR$-periodic operator $\gamma(\epsilon)$, that is the real-valued function $\rho_\varepsilon \in L^1_\per$ characterized by
\begin{equation*}
\forall v \in L^\infty_\per, \quad \int_{\WS} \rho_\varepsilon(\br) v(\br) \rd \br = \VTr \left( v \gamma(\varepsilon) \right) =
\sum_{n \in \N^*}  \fint_{\BZ}
\bra u_{n\bk} |  v | u_{n\bk} \ket \1(\varepsilon_{n\bk} \le \varepsilon) \rd \bk.
\end{equation*}
We therefore have
\begin{equation}
\label{eq:totalDensity}
\forall \br \in \WS, \quad \rho_\varepsilon(\br) := \sum_{n \in \N^*} \fint_{\BZ} | u_{n\bk} |^2 (\br) \1 (\varepsilon_{n \bk} \le \varepsilon)  \rd \bk.
\end{equation}
The (zero-temperature) {\em ground state electronic density} then is $\rho := \rho_{\varepsilon_F}$.

\medskip

\begin{remark}[Observables]
In this article, we focus on the numerical calculation of the
integrated density of states $\cN$, the Fermi level $\varepsilon_F$,
the ground state energy per unit cell $E$ and the ground state
electronic density $\rho$ of the system. It is possible to extend
our results to a broader class of observables, but precising the
complete set of assumptions needed to formulate our results is
cumbersome and we will not proceed further in this direction.
\end{remark}

\medskip

\begin{remark}[Discretization errors]
  \label{rem:discretization}
The goal of this paper is to study various numerical schemes to
compute Brillouin zone integrals of the form above. In particular,
we assume that the eigenvalues $\eps_{n\bk}$ and eigenvectors
$u_{n \bk}$ are perfectly known on some mesh of the Brillouin zone
$\cB$, and we study the numerical errors coming from the
discretization of the Brillouin zone in \eqref{eq:IDoS},
\eqref{eq:IDoE} and \eqref{eq:totalDensity}. We do not study the
effects of numerical errors in the computation of the $\eps_{n\bk}$
and $u_{n\bk}$ themselves. We also do not study more complicated
nonlinear models such as the periodic Kohn-Sham model.

It is however interesting to note that the discretization of the
eigenvalue problem $H_{\bk} u_{n\bk} = \varepsilon_{n\bk} u_{n\bk}$
in a $\bk$-consistent manner is not trivial: since $H_{\bk}$ is not
equal but unitarily equivalent to $H_{\bk+\bK}$ for $\bK \in \RLat$,
a fixed Galerkin space will yield eigenvalues $\eps_{n\bk}$ that are
not $\RLat$-periodic. Conversely, popular choices such as a $\bk$-dependent
Galerkin space $V_{\bk}$ consisting of all the plane waves
$\re^{\ri \bK \cdot \br}$ such that
$\frac 1 2 |\bk+\bK|^{2} \leq E_{\rm cut}$ will yield eigenvalues
that are periodic, but not continuous as a function of $\bk$. It is
possible to restore continuity by using a smooth cutoff; we plan to
explore this possibility from a numerical analysis viewpoint in a forthcoming paper.
\end{remark}

\section{Properties of the band structure at the Fermi level}
\label{sec:geometry}
We first partition the Brillouin zone $\BZ$ into several sets, whose
definitions are gathered here for the sake of clarity. For a given energy level
$\varepsilon \in \R$, we introduce, for $n \in \N$ (and with
the convention that  $\varepsilon_{0 \bk} = -\infty$), the sets
\begin{align*}
\cB_{n}(\varepsilon) & := \{\bk \in \BZ, \ \varepsilon_{n \bk} < \varepsilon <
\varepsilon_{n+1,\bk}\} & \text{$n$-th component of the Brillouin zone,} \\
\cS_{n}(\varepsilon) & := \{\bk \in \BZ, \ \varepsilon_{n \bk} = \varepsilon\}
&  \text{$n$-th sheet of the \REV{\old{Fermi surface}level set}},
\\
\cS(\eps) & := \bigcup_{n \in \N} \cS_{n}(\eps) = \{\bk \in \BZ, \ \exists
n \in \N,\, \varepsilon_{n \bk} = \varepsilon\} & \text{\REV{\old{Fermi surface}}level set}.
\end{align*}
\REV{The set $\cS(\varepsilon_F)$ is called the Fermi surface.} For all $\varepsilon \in \R$, it holds that
\begin{equation} \label{eq:decomp_B}
\cB = \cS(\varepsilon) \cup \left( \bigcup_{n \in \N} \cB_n(\varepsilon) \right).
\end{equation}
From~\eqref{eq:comparison_eigenvalues}, the unions in the above definition of $\cS(\eps)$ and in~\eqref{eq:decomp_B} are finite. The sets $\cS_1(\eps), \, \cS_2(\eps), \cdots $ are 
pairwise disjoint outside of \textit{band crossings} where $\eps_{n \bk} = \eps_{n+1,\bk} = \eps$ for some $n$. The boundary of $\cB_{n}(\eps)$ is
$\partial\cB_{n}(\eps) = \cS_{n}(\eps) \cup \cS_{n+1}(\eps)$. A typical example of the sets $\cB_n(\eps)$ and $\cS_n(\eps)$ is
represented in Figure~\ref{fig:BZ}.
\begin{figure}[h!]
\centering
\includegraphics[width=.3\textwidth]{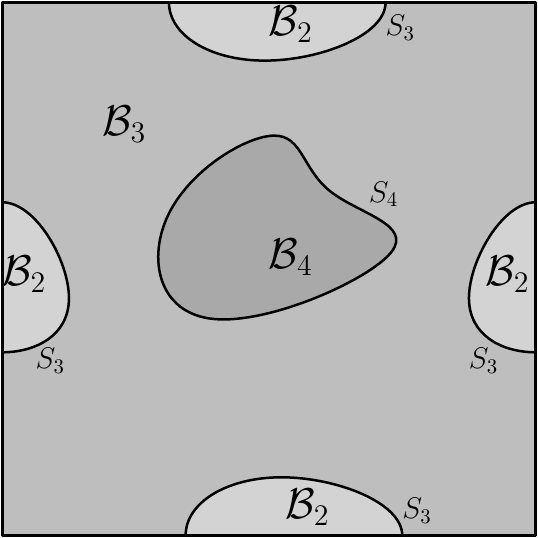}
\caption{Partitioning of the Brillouin zone into the sets
$\BZ_{n}(\varepsilon)$ and $\cS_{n}(\varepsilon)$, for a given
value of $\varepsilon$.}
\label{fig:BZ}
\end{figure}

As we shall see in Lemma~\ref{lem:coverBZ}, various spectral quantities are
smooth in $\BZ_{n}(\eps)$ and on $\cS_{n}(\eps)$. It will be useful in our analysis
to extend this smoothness to the following neighborhoods of these
sets (see Figure~\ref{fig:sets_in_1d}): for $\delta > 0$, we set
\begin{align*}
\widetilde{\cB}_n(\eps, \delta) & := \bigcup_{\varepsilon' \in (\varepsilon
- \delta, \varepsilon + \delta)} \cB_n(\varepsilon')  = \left\{ \bk \in \cB, \ \exists \varepsilon' \in
(\varepsilon - \delta, \varepsilon + \delta), \ \varepsilon_{n \bk} < \varepsilon' <
\varepsilon_{n+1,\bk} \right\}, \\
\widetilde{\cS}_n(\varepsilon, \delta) & := \bigcup_{\varepsilon' \in (\varepsilon -
\delta, \varepsilon + \delta)}
\cS_n(\varepsilon') = \left\{ \bk \in \cB, \ \varepsilon_{n \bk} \in (\varepsilon - \delta,
\varepsilon + \delta) \right\}.
\end{align*}
\REV{Here and thereafter, smooth means infinitely differentiable.}

\begin{figure}
\centering

\begin{tikzpicture}
\draw[->] (-6, -3) -> (-6, 2) node[right] {$\varepsilon$};
\draw[->] (-6.5, -2.7) -> (6.5, -2.7) node[right] {$\bk$};

\draw[line width =1, red] (-6.2,-0.5) node[left] {$\varepsilon_F$}-- (6, -0.5) ;
\draw (-6.2,-0.7) -- (6, -0.7) ;  \draw  (-6.2,-0.3) -- (6, -0.3); \draw[<->] (-6.1,
-0.5) -- (-6.1, -0.7) node[left] {$\delta$};


    \draw[domain=-6.2:-5.24,smooth,variable=\t, purple] plot ({\t},{-\t*\t/7-\t/3+2});
    \draw[domain=-5.24:4.2,smooth,variable=\t, purple] plot ({\t},{\t*\t*\t*\t/200-\t*\t/6-\t/8});
    \draw[domain=4.2:4.5,smooth,variable=\t, purple] plot ({\t},{-\t*\t/7-\t/3+2})
    node[right] {$\varepsilon_{n\bk}$};

    \draw[domain=-6.2:-5.24,smooth,variable=\t, blue] plot ({\t},{\t*\t*\t*\t/200-\t*\t/6-\t/8});
    \draw[domain=-5.24:4.2,smooth,variable=\t, blue] plot ({\t}, {-\t*\t/7-\t/3+2});
    \draw[domain=4.2:5,smooth,variable=\t, blue] plot
    ({\t},{\t*\t*\t*\t/200-\t*\t/6-\t/8}) node[right] {$\varepsilon_{n+1, \bk}$};

    \draw[line width=0.3, dashed] (-5.65, -2.7) -- (-5.65, -0.7);
    \draw[line width=0.3, dashed] (-5.35, -2.7) -- (-5.35, -0.3);
    \draw[line width=0.3, dashed] (-5.1, -2.7) -- (-5.1, -0.3);
    \draw[line width=0.3, dashed] (-4.65, -2.7) -- (-4.65, -0.7);
    \draw[line width=0.3, dashed] (-2.9, -2.7) -- (-2.9, -0.7);
    \draw[line width=0.3, dashed] (-1.9, -2.7) -- (-1.9, -0.3);
    \draw[line width=0.3, dashed] (1.05, -2.7) -- (1.05, -0.3);
    \draw[line width=0.3, dashed] (1.8, -2.7) -- (1.8, -0.7);

    \draw[line width=2, purple] (-5.65, -2.6) -- (-5.35, -2.6);
    \draw[line width=2, purple] (-5.1, -2.6) -- (-4.65, -2.6);
    \draw[line width=2, purple] (-2.9, -2.6) -- (-1.9, -2.6);
    \draw[line width=2, purple] (1.05, -2.6) -- (1.8, -2.6);

    \draw[line width=0.3, dashed] (3, -2.7) -- (3, -0.3);
    \draw[line width=0.3, dashed] (3.35, -2.7) -- (3.35, -0.7);

    \draw[line width=2, blue] (3, -2.6) -- (3.35, -2.6);

    \draw[line width=2, densely dotted, orange] (-5.65, -3) -- (-4.65, -3);
    \draw[line width=2, densely dotted, orange] (-2.9, -3) -- (1.8, -3);

    \draw[line width=2, densely dotted, purple] (-6.2, -2.9) -- (-5.35, -2.9);
    \draw[line width=2, densely dotted, purple] (-5.1, -2.9) -- (-1.9, -2.9);
    \draw[line width=2, densely dotted, purple] (1.05, -2.9) -- (3.35, -2.9);
    \draw[line width=2, densely dotted, blue] (3, -2.8) -- (4.5, -2.8);

    \draw[line width=2, purple] (3.5, 2.3) -- (4, 2.3) node[right]
    {$\widetilde{\cS}_{n}(\varepsilon_F, \delta)$};
    \draw[line width=2, blue] (3.5, 1.8) -- (4, 1.8) node[right]
    {$\widetilde{\cS}_{n+1}(\varepsilon_F, \delta)$};
    \draw[line width=2, densely dotted, orange] (3.5, 1.3) -- (4, 1.3) node[right]
    {$\widetilde{\cB}_{n-1}(\varepsilon_F, \delta)$};
    \draw[line width=2, densely dotted, purple] (3.5, 0.8) -- (4, 0.8) node[right]
    {$\widetilde{\cB}_{n}(\varepsilon_F, \delta)$};
    \draw[line width=2, densely dotted, blue] (3.5, 0.3) -- (4, 0.3) node[right]
    {$\widetilde{\cB}_{n+1}(\varepsilon_F, \delta)$};

    \end{tikzpicture}

    \caption{A schematic view of the sets $\widetilde{\cB}_n$ and $\widetilde{\cS}_n$.}
    \label{fig:sets_in_1d}
    \end{figure}


\subsection{Assumptions on the Fermi level}
\label{sec:assumptions}
Recall that we are studying metallic systems, so that the Fermi level
$\varepsilon_F$ is uniquely defined. In particular, the Fermi surface
$\cS(\varepsilon_F)$ is non empty. We make the following two
assumptions to ensure a good mathematical structure of the Fermi surface:
\begin{align*}
&\textbf{Assumption 1 (no band crossings at $\varepsilon_F$): } & \quad \forall n \neq m, \quad
\cS_n(\varepsilon_F) \cap \cS_m(\eps_F) = \emptyset; \\
&\textbf{Assumption 2 (no van Hove singularities at $\varepsilon_F$): } & \quad \forall n \in \N^*, \ \forall \bk \in \cS_n(\varepsilon_F),
\quad  \nabla_\bk \varepsilon_{n \bk} \neq \bnull.
\end{align*}
From Assumption 2, we see that, for all $n \in \N^*$, the map
$\varepsilon_n : \bk \mapsto \varepsilon_{n \bk}$ is a submersion
near the Fermi surface, so that
$\cS_{n}(\varepsilon_F) = \eps_{n}^{-1}(\eps_{F})$ is either empty or a smooth compact co-dimension~1
submanifold of the torus $\BZ$. From
Assumption 1, $\cS(\varepsilon_F)$ is itself a smooth compact
manifold, as the finite disjoint union of the
$\cS_{n}(\varepsilon_F)$.
\begin{remark}[Genericity of hypotheses]
It is an interesting question to know whether such assumptions hold
generically, i.e. for almost every potential $V$. For a generic
smooth family $H_{\bk}$ of self-adjoint operators on $L^2_\per$ with
compact resolvents, eigenvalue crossings happen on a set of
codimension $3$ \cite{vonNeumann1993}, and band degeneracies on
isolated points. \REV{\old{There is therefore no reason to expect neither band
crossings nor degeneracies at the Fermi level in the physical cases
$d \leq 3$, and we would naturally expect that both these
assumptions are generically true.} Band crossings or degeneracies thus do not appear in general at the Fermi level in the physical cases $d \leq 3$, and we would naturally expect that both these assumptions are generically true. }There are however two important
caveats: first, many natural conjectures on the genericity of
properties of the band structure still remain open in general (see
\cite{kuchment2016overview} for an overview), and second, symmetries may force crossings or degeneracies. For instance,
Assumption 1 is violated in the case of the free electron gas, or in
graphene \cite{Fefferman2012}, due to the high symmetries of these
systems. \REV{We will treat the case of the graphene in future work. In the sequel, the quality of Assumption 1 and Assumption 2 are measured by quantities $\delta_0 > 0$ and $C_\nabla > 0$ respectively (see Lemma~\ref{lem:coverBZ} below). For instance, for systems with Fermi level close to van-Hove singularity, $C_\nabla$ will be small.}
\end{remark}
\medskip

Let us define by 
$$
\underline{M}:= \min\{ n\in \N^*, \quad \cS_n(\varepsilon_F)\neq \emptyset \} \quad \mbox{ and } \overline{M}:= \max\{ n\in \N^*, \quad \cS_n(\varepsilon_F) \neq \emptyset \}.
$$
The existence of ${\underline{M}}$ and ${\overline{M}}$ comes from~\eqref{eq:comparison_eigenvalues}, and it naturally holds that $\underline{M} \leq \overline{M}$.

\medskip

In the next lemma, we collect a number of properties of the Fermi
surface and of spectral quantities on the sets
$\widetilde \BZ_{n}(\eps_{F},\delta)$ and
$\widetilde \cS_{n}(\eps_{F},\delta)$. In order to state these results,
we introduce the density matrices
\begin{equation} \label{eq:gammak}
\gamma_{n \bk} := \sum_{m=1}^{n}|u_{m \bk} \ket \bra u_{m \bk} | \quad \text{acting on} \quad L^2_\per,
\end{equation}
which are well-defined operators whenever $\varepsilon_{n \bk} < \varepsilon_{n+1,\bk}$, and the associated densities
\begin{equation} \label{eq:def:rhonk}
\rho_{n \bk}=\sum_{m=1}^n |u_{m\bk}|^2 \in L^1_\per.
\end{equation}
We recall that a smooth map $F : \R^{d} \to E$ where $E$ is a Banach space is
real-analytic if it is locally equal to its Taylor series.

\medskip

\begin{lemma}
\label{lem:coverBZ}
Under Assumption 1 and 2, there exists $\delta_0 > 0$ and $C_\nabla > 0$ such that
\begin{enumerate}[(i)]
\item For any $n \in \N^{*}$ and for all $0<\delta \leq \delta_0$, $\widetilde{\cS}_n(\eps_F, \delta) \neq
\emptyset$ if and only if $\underline M \leq n \leq \overline M$; \label{lem:coverBZ_0}
\item for all ${\underline{M}} \le m < n \le {\overline{M}}, \; \widetilde{\cS}_m(\varepsilon_F, \delta_0) \cap \widetilde{\cS}_n(\varepsilon_F, \delta_0) = \emptyset$ ;   \label{lem:coverBZ_1}
\item for all ${\underline{M}} \le n \le {\overline{M}}$ and all $\bk \in \widetilde{\cS}_n(\varepsilon_F, \delta_0), \; | \nabla_\bk \varepsilon_{n \bk} | \geq C_{\nabla}$ ; \label{lem:coverBZ_2}
\item for all ${\underline{M}} \le n \le {\overline{M}}$ and all
$\varepsilon \in (\eps_F - \delta_0, \eps_F + \delta_0)$,
$\cS_n(\varepsilon)$  is a non-empty smooth compact manifold
of co-dimension 1, with non-zero Haussdorf measure
$\left| \cS_n(\eps) \right|_{\rm Hauss} >
0$. The same properties hold for
$\cS(\varepsilon)$; \label{lem:coverBZ_3}
\item assume in addition that $V \in H^s_\per$ for some $s \ge 0$. Then, for all ${\underline{M}} \le n \le {\overline{M}}$,
\begin{itemize}
	\item the map $\bk \mapsto \gamma_{n \bk}$ is real-analytic from $\widetilde{\cB}_n(\eps_F, \delta_{0})$ to $\fS_{1,s+2}$;
	\item the map $\bk \mapsto \rho_{n \bk}$ is real-analytic from $\widetilde{\cB}_n(\eps_F, \delta_{0})$ to $H^{s+2}_\per$;
	\item the map $\bk \mapsto \varepsilon_{n \bk}$ is real-analytic from $ \widetilde{\cS}_n(\eps_F, \delta_{0})$ to $\R$.
\end{itemize}
    \label{lem:coverBZ_4}
\end{enumerate}
\end{lemma}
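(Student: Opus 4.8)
The plan is to prove everything with a single radius $\delta_0$, fixed once and for all by a finite list of compactness requirements, and to isolate the analyticity statement \eqref{lem:coverBZ_4}, which alone needs perturbation theory rather than soft arguments.

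First I would set up the local picture near the Fermi surface. Each $\cS_n(\varepsilon_F)=\{\bk:\varepsilon_{n\bk}=\varepsilon_F\}$ is closed, hence compact in $\BZ$, and Assumption 1 makes the family $(\cS_n(\varepsilon_F))_n$ pairwise disjoint. At a point of $\cS_n(\varepsilon_F)$ the ordering of eigenvalues together with Assumption 1 forces $\varepsilon_{n-1,\bk}<\varepsilon_F=\varepsilon_{n\bk}<\varepsilon_{n+1,\bk}$, so $\varepsilon_{n\bk}$ is a simple isolated eigenvalue there; by continuity and compactness I obtain a neighbourhood $U_n\supset\cS_n(\varepsilon_F)$ and constants $g,C_\nabla>0$ such that on $U_n$ the eigenvalue $\varepsilon_{n\bk}$ is simple with spectral gap $\geq g$, is real-analytic (holomorphic family of type (A), see below), and satisfies $|\nabla_\bk\varepsilon_{n\bk}|\geq C_\nabla$ (here Assumption 2 and continuity of $\nabla_\bk\varepsilon_{n\bk}$ on the ``simple'' region enter). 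Two elementary facts are also needed: (a) for any open $U\supset\cS_n(\varepsilon_F)$ there is $\delta>0$ with $\widetilde\cS_n(\varepsilon_F,\delta)\subset U$ (a non-convergent sequence would subconverge to a point of $\cS_n(\varepsilon_F)\setminus U$); (b) if $\widetilde\cS_m(\varepsilon_F,\delta)$ and $\widetilde\cS_n(\varepsilon_F,\delta)$ met for all $\delta>0$ with $m\neq n$, a limit point would lie in $\cS_m(\varepsilon_F)\cap\cS_n(\varepsilon_F)$, contradicting Assumption 1. Finally, I claim $\cS_n(\varepsilon_F)\neq\emptyset$ for \emph{every} $\underline M\leq n\leq\overline M$: picking $\bk_\pm\in\cS_{\underline M}(\varepsilon_F),\cS_{\overline M}(\varepsilon_F)$, the ordering plus Assumption 1 give $\varepsilon_{n\bk_-}>\varepsilon_F>\varepsilon_{n\bk_+}$ for $\underline M<n<\overline M$, so the intermediate value theorem on the connected torus $\BZ$ finishes it; for $n\notin[\underline M,\overline M]$ the same IVT argument shows $\varepsilon_{n\bk}$ has constant sign relative to $\varepsilon_F$, whence by compactness and \eqref{eq:comparison_eigenvalues} (only finitely many such $n$ matter) there is $\delta_1>0$ with $\widetilde\cS_n(\varepsilon_F,\delta_1)=\emptyset$. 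I then fix $\delta_0\in(0,\delta_1]$ small enough that $\widetilde\cS_n(\varepsilon_F,\delta_0)\subset U_n$ for $\underline M\leq n\leq\overline M$ and $\widetilde\cS_m(\varepsilon_F,\delta_0)\cap\widetilde\cS_n(\varepsilon_F,\delta_0)=\emptyset$ for $m\neq n$ in that range (possible by (a), (b)).

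With $\delta_0$ in hand, \eqref{lem:coverBZ_0}--\eqref{lem:coverBZ_2} are immediate: \eqref{lem:coverBZ_0} since $\widetilde\cS_n(\varepsilon_F,\delta)\supset\cS_n(\varepsilon_F)$ is non-empty exactly when $\underline M\leq n\leq\overline M$ and empty otherwise for $\delta\leq\delta_0$; \eqref{lem:coverBZ_1} by construction; \eqref{lem:coverBZ_2} since $\widetilde\cS_n(\varepsilon_F,\delta_0)\subset U_n$. For \eqref{lem:coverBZ_3}, given $\varepsilon\in(\varepsilon_F-\delta_0,\varepsilon_F+\delta_0)$ the set $\cS_n(\varepsilon)=\varepsilon_n^{-1}(\{\varepsilon\})$ lies in $\widetilde\cS_n(\varepsilon_F,\delta_0)\subset U_n$, where $\varepsilon_n$ is real-analytic (simple isolated eigenvalue) and submersive (by \eqref{lem:coverBZ_2}); hence $\varepsilon$ is a regular value and $\cS_n(\varepsilon)$ is a smooth compact codimension-$1$ submanifold of $\BZ$. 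Non-emptiness follows by running the (unit energy speed) flow of $\nabla_\bk\varepsilon_{n\bk}/|\nabla_\bk\varepsilon_{n\bk}|^2$ from a point of $\cS_n(\varepsilon_F)$: it stays in $\widetilde\cS_n(\varepsilon_F,\delta_0)$ and satisfies $\varepsilon_{n\bk(t)}=\varepsilon_F+t$ for $|t|<\delta_0$, so $\varepsilon_n$ attains every value in the interval. Positivity of the $(d-1)$-Hausdorff measure is then automatic for a non-empty $(d-1)$-manifold, and the statements for $\cS(\varepsilon)=\bigsqcup_{n=\underline M}^{\overline M}\cS_n(\varepsilon)$ follow since this is a finite disjoint union (by \eqref{lem:coverBZ_0} and \eqref{lem:coverBZ_1}).

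It remains to prove \eqref{lem:coverBZ_4}. After complexifying $\bk$, the map $\bk\mapsto H_\bk=-\tfrac12\Delta-\ri\bk\cdot\nabla+\tfrac{\bk^2}{2}+V$ is a holomorphic family of type (A), being polynomial in $\bk$ on the fixed domain $H^2_\per$; on $\widetilde\cB_n(\varepsilon_F,\delta_0)$ one has $\varepsilon_{n\bk}<\varepsilon_{n+1,\bk}$, so locally around any point a fixed positively oriented contour $\Gamma$ separates $\{\varepsilon_{1\bk},\dots,\varepsilon_{n\bk}\}$ from the rest of the spectrum and $\gamma_{n\bk}=\frac{1}{2\pi\ri}\oint_\Gamma(z-H_\bk)^{-1}\,\rd z$ is real-analytic into $\sB(L^2_\per)$ by holomorphy of the resolvent. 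A bootstrap on $H_\bk u_{m\bk}=\varepsilon_{m\bk}u_{m\bk}$ (equivalently, iterating $1-\Delta=(1-\bk^2)+2H_\bk+2\ri\bk\cdot\nabla-2V$ applied to $\gamma_{n\bk}$, using that multiplication by $V\in H^s_\per$ maps $H^{s+2}_\per$ into $H^s_\per$ since $d\leq 3$) upgrades this to real-analyticity of $\bk\mapsto\gamma_{n\bk}$ into $\fS_{1,s+2}$; composition with the bounded linear density map $\fS_{1,s+2}\to H^{s+2}_\per$ gives the statement for $\rho_{n\bk}$. On $\widetilde\cS_n(\varepsilon_F,\delta_0)\subset U_n$ the eigenvalue $\varepsilon_{n\bk}$ is simple and isolated, so $\varepsilon_{n\bk}=\Tr(H_\bk P_{n\bk})$ with $P_{n\bk}$ the corresponding rank-one Riesz projection, which is real-analytic by the same argument. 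I expect the only genuinely technical point to be this last upgrade — from $\sB(L^2_\per)$-analyticity of $\gamma_{n\bk}$ to $\fS_{1,s+2}$-analyticity — since it requires carefully tracking the $\bk$-dependence through the elliptic-regularity bootstrap and the Sobolev product estimates; everything else is soft continuity and compactness.
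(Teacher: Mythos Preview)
Your approach is essentially the same as the paper's: compactness and continuity for \eqref{lem:coverBZ_0}--\eqref{lem:coverBZ_3}, and the Riesz/Cauchy integral plus an elliptic bootstrap for \eqref{lem:coverBZ_4}. In fact you are more careful than the paper on the soft parts: the paper dismisses \eqref{lem:coverBZ_0}--\eqref{lem:coverBZ_1} in one line and derives \eqref{lem:coverBZ_2}--\eqref{lem:coverBZ_3} from \eqref{lem:coverBZ_4}, whereas you supply the intermediate-value argument showing $\cS_n(\varepsilon_F)\neq\emptyset$ for every $\underline M\le n\le\overline M$ (which the paper uses but does not spell out) and the gradient-flow argument for non-emptiness of $\cS_n(\varepsilon)$.

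The one point you flag as genuinely technical---the upgrade from $\sB(L^2_\per)$-analyticity of $\gamma_{n\bk}$ to $\fS_{1,s+2}$-analyticity---is exactly where the paper does something specific that your sketch leaves implicit. After obtaining analyticity of $\bk\mapsto\gamma_{n\bk}$ into $\sB(L^2_\per,H^{s+2}_\per)$ by the bootstrap you describe, the paper constructs an analytic $L^2_\per$-orthonormal basis of $\mathrm{Ran}(\gamma_{n\bk})$ near each $\bk_0$ by L\"owdin-orthogonalizing the vectors $\widetilde v_{j\bk}:=\gamma_{n\bk}v_{j\bk_0}$, i.e.\ $v_{i\bk}=\sum_j\widetilde v_{j\bk}[S_\bk^{-1/2}]_{ji}$ with $S_\bk$ the overlap matrix. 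Then $(1-\Delta)^{(s+2)/2}\gamma_{n\bk}(1-\Delta)^{(s+2)/2}=\sum_{j=1}^n|w_{j\bk}\rangle\langle w_{j\bk}|$ with $w_{j\bk}=(1-\Delta)^{(s+2)/2}v_{j\bk}$ analytic in $L^2_\per$, which gives $\fS_{1,s+2}$-analyticity directly; the $H^{s+2}_\per$-analyticity of $\rho_{n\bk}=\sum_j|v_{j\bk}|^2$ then follows from the algebra property of $H^{s+2}_\per$ rather than by invoking an abstract density map $\fS_{1,s+2}\to H^{s+2}_\per$. Your formula $\varepsilon_{n\bk}=\Tr(H_\bk P_{n\bk})$ is exactly the paper's $\Tr[H_\bk(\gamma_{n\bk}-\gamma_{n-1,\bk})]$.
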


\begin{proof}
Assertions {\em(\ref{lem:coverBZ_0})} and {\em(\ref{lem:coverBZ_1})} come from Assumption 1 and the continuity
of the maps $\bk \mapsto \varepsilon_{n \bk}$. We now prove {\em
(\ref{lem:coverBZ_4})}. Using {\em (\ref{lem:coverBZ_4})} and
Assumption {\em (\ref{lem:coverBZ_1})} we will deduce {\em
(\ref{lem:coverBZ_2})}, which implies {\em (\ref{lem:coverBZ_3})}.

Let ${\underline{M}} \le n \le {\overline{M}}$ and $\delta_{0} > 0$ small enough so that {\em(\ref{lem:coverBZ_0})} and {\em (\ref{lem:coverBZ_1})} hold true.  

The map $\bk \mapsto \varepsilon_{n \bk}$ is continuous on $\widetilde{\cB}_n(\varepsilon_F, \delta_0)$ and for all $\bk \in \widetilde{\cB}_n(\varepsilon_F, \delta_0)$, we have $\varepsilon_{n+1, \bk} - \varepsilon_{n \bk} > 0$. Therefore, there exists  $0 < \delta_1 < \delta_0$ and $g > 0$ such that
\[
\forall \bk \in \widetilde{\cB}_n(\varepsilon_F, \delta_1), \quad \varepsilon_{n+1, \bk} - \varepsilon_{n \bk} \ge g.
\]
We first prove the analyticity of $\bk \mapsto \gamma_{n \bk}$ on $\widetilde{\cB}_n(\varepsilon_F, \delta_1)$. Let $\bk_0 \in \widetilde{\cB}_n(\varepsilon_F, \delta_1)$.
We then set $\varepsilon := \varepsilon_{n \bk_{0}} + g/2$, and consider the positively oriented loop (we denote by
$\Sigma := \min \sigma(H)$)
\begin{equation*}
\sC := [\Sigma - 1 - \ri, \varepsilon - \ri] \cup [\varepsilon - \ri, \varepsilon + \ri] \cup [\varepsilon + \ri, \Sigma - 1 +\ri] \cup [\Sigma - 1 + \ri, \Sigma - 1 -\ri].
\end{equation*}
\begin{figure}[!ht]
\centering
\begin{tikzpicture}
\draw[->] (-5, 0) -> (5,0); \node at (5.1, 0.2) {$\R$};
\draw[line width = 2] (-3,-0.2) -- (-3, 0.2); \node at (-3, 0.5) {$\Sigma$};
\foreach \x in {-2.5, -2, -1.7, -1.5, -1, 0, 1.2, 3} {
\draw[red, line width = 1.5] (\x, -0.15) -- (\x, 0.15);
}
\node[red] at (-2.5, -0.35) {$\varepsilon_{1 \bk}$};
\node[red] at (-1.5, -0.35) {$\cdots$};
\node[red] at (0, -0.35) {$\varepsilon_{n \bk}$};
\node[red] at (1.3, -0.35) {$\varepsilon_{n+1, \bk}$};

\draw[blue, line width = 1] (-3.5, -0.7) -- (-3.5, 0.7) -- (0.6, 0.7) -- (0.6,
-0.7) -- (-3.5, -0.7);
\draw[blue] (-1, 0.5) -- (-1.2, 0.7) -- (-1, 0.9);
\node[blue] at (0.9, 0.7) {$\sC$};

\node[blue] at (0.75, 0.12) {$\varepsilon$};
\end{tikzpicture}
\caption{The contour $\sC$.}
\label{fig:contourC}
\end{figure}

From the definition of $\sC$, we see that there exists $0 < \delta_2 < \delta_1$ such that
\[
\forall \bk \in \BZ \ \text{s.t.} \ | \bk - \bk_0 | \le \delta_2, \quad \forall \lambda \in \sC, \quad | \lambda - H_\bk | \ge g/4.
\]
In particular, we see that Cauchy's residual formula
\begin{equation} \label{eq:gammaAnalytic}
{\gamma_{n \bk}} = \dfrac{1}{2 \pi \ri} \oint_{\sC} \dfrac{\rd \lambda}{\lambda - H_{\bk}},
\end{equation}
holds for $\bk$ in a neighborhood of $\bk_0$. In addition, 
$$
H_{\bk} = H_{\bk_0} + (\bk - \bk_{0}) \cdot (- \ri
\nabla+\bk_0) + \frac{|\bk - \bk_{0}|^2}2,
$$
so that, for $\bk - \bk_{0}$ small enough,
$$
(\lambda - H_{\bk})^{-1} = (\lambda-H_{\bk_0})^{-1} \left( 1 - \left[
\left( (\bk - \bk_{0}) \cdot (- \ri \nabla+\bk_0) + \frac{|\bk - \bk_{0}|^2}2 \right) (\lambda-H_{\bk_0})^{-1} \right] \right)^{-1}.
$$
For all $0 \le s' \le s$, the linear operator
$(\lambda - H_{\bk_0})^{-1}$ is continuous from $H^{s'}_\per$ to
$H^{s'+2}_\per$ by classical elliptic regularity results, and the operator in brackets is bounded on $H^{s'}$. Therefore
we obtain, by expanding in Neumann series, that the map
$\bk \mapsto \gamma_{n \bk}$ is real-analytic from a neighborhood of
$\bk_0$ to $\sB(H^{s'}_\per,H^{s'+2}_\per)$. Using the fact that
$\gamma_{n\bk} = \gamma_{n\bk}^{2}$ and a bootstrap argument, this
implies that the map $\bk \mapsto \gamma_{n \bk}$ is real-analytic
from a neighborhood of $\bk_0$ to $\sB(L^2_\per,H^{s+2}_\per)$.

\medskip

Let $(v_{1\bk_0},\cdots, v_{n\bk_0})$ be an $L^2_\per$-orthonormal basis of $\mbox{Ran}(\gamma_{n\bk_0})$, $\widetilde v_{j\bk} = \gamma_{n\bk}v_{j\bk_0}$ and 
$$
v_{i\bk}= \sum_{j=1}^n \widetilde v_{j\bk} [S_\bk^{-1/2}]_{ji},
$$
where $S_\bk$ is the overlap matrix defined by $[S_\bk]_{ji} = \langle
\widetilde v_{j\bk}, \widetilde v_{i\bk} \rangle$, so that
$(v_{1\bk},\dots,v_{n\bk})$ is an $L^2_\per$-orthonormal basis of ${\rm Ran}\, \gamma_{n\bk}$. Of course, it holds that $\widetilde{v}_{j \bk_0} = v_{j \bk_0}$ and $S_{\bk_0} = {\rm Id}_n$. It is easily
checked that the map $\bk \mapsto (v_{1 \bk},\cdots, v_{n \bk}) \in
(H^{s+2}_\per)^n$ is well-defined and real-analytic  in a neighborhood
of $\bk_0$. In particular, we have, in a neighborhood of $\bk_0$, that
$$
(1-\Delta)^{(s+2)/2} \gamma_{n\bk} (1-\Delta)^{(s+2)/2} = \sum_{j=1}^n | w_{j\bk} \rangle \langle w_{j\bk}|,
$$
where the maps $\bk \mapsto w_{j\bk}:= (1-\Delta)^{(s+2)/2}v_{j\bk} \in L^2_\per$ are real-analytic in a neighborhood of $\bk_0$. It follows that the map 
$\bk \mapsto \gamma_{n\bk}$ is analytic from a neighborhood of $\bk_0$ to $\fS_{1,s+2}$, hence from $\widetilde{\cB}_n(\varepsilon_F, \delta_1)$ to $\fS_{1,s+2}$.

\medskip

On the other hand, whenever $s+2 > \frac{d}{2}$ (which is the case whenever $s \ge 0$ and $d \le 3$), it holds that $H^{s+2}_\per$ is an algebra. 
As a result, from the analyticity of the map  $\bk \mapsto (v_{1 \bk},\cdots, v_{n \bk}) \in  (H^{s+2}_\per)^n$, we deduce the analyticity of the maps 
$\bk \mapsto \rho_{n\bk} :=  \sum_{i=1}^{n}\left| u_{i \bk} \right|^2  = \sum_{i=1}^{n}\left| v_{i \bk} \right|^2  \in H^{s+2}_\per$.

\medskip

To prove the analyticity of $\bk \mapsto \varepsilon_{n \bk}$
on $\widetilde{\cS}_n(\varepsilon_F, \delta_1)$, we notice that
$\widetilde{\cS}_n(\varepsilon_F, \delta_1) = \widetilde{\cB}_{n-1}(\varepsilon_F,
\delta_1) \cap \widetilde{\cB}_n(\varepsilon_F, \delta_1)$. As a result,
both $\bk \mapsto \gamma_{n-1,\bk}$ and $\bk \mapsto \gamma_{n \bk}$ are analytic from $\widetilde{\cS}_n(\varepsilon_F, \delta_1)$ to $\fS_{1,s+2}$. 
Therefore, on $\widetilde{\cS}_n(\varepsilon_F, \delta_1)$, it holds that
\begin{align*}
\varepsilon_{n \bk} &= \Tr_{L^2_\per} \left[ H_\bk ( \gamma_{n \bk} - \gamma_{n-1,\bk} ) \right] \\ &= \Tr_{L^2_\per} \left[ (1-\Delta)^{-1/2} H_\bk (1-\Delta)^{-1/2} (1-\Delta)^{1/2} ( \gamma_{n \bk} - \gamma_{n-1,\bk} ) (1-\Delta)^{1/2} \right] .
\end{align*}
Since the maps $\widetilde{\cS}_n(\varepsilon_F, \delta_1) \ni \bk \mapsto (1-\Delta)^{-1/2} H_\bk (1-\Delta)^{-1/2}  \in \sB$ and $\widetilde{\cS}_n(\varepsilon_F, \delta_1) \ni \bk  \mapsto (1-\Delta)^{1/2} 
( \gamma_{n \bk} - \gamma_{n-1,\bk} ) (1-\Delta)^{1/2}  \in \fS_1$  are real-analytic, this proves the real-analyticity of the map $\bk \mapsto \varepsilon_{n \bk}$ on $\widetilde{\cS}_n(\varepsilon_F, \delta_1)$.
\end{proof}


\subsection{Density of states}

We are now concerned with the properties of the density of states $\cD(\eps)$, defined as
the derivative of the integrated density of states $\cN(\eps)$ defined in~\eqref{eq:IDoS}. Since $\cN$ is a 
non-decreasing continuous function which is increasing on $\sigma(H)$ and
constant outside $\sigma(H)$, $\cD$ is a positive measure on $\R$ whose support is exactly
$\sigma(H)$.  Our goal in this section is to establish that, under Assumptions 1 and 2, both
$\cN$ and $\cD$ are smooth around $\eps_{F}$.

\medskip

We recall some tools of differential geometry. In the sequel, if $S$ is a
(smooth) hypersurface of $\BZ$, we denote by $\rd \sigma_S$ the Haussdorf measure on $S$. We write
$\rd \sigma$ instead of $\rd \sigma_S$ if there is no risk of confusion. We first recall the co-area formula, which allows the integration of a function
$g : \BZ \to \R$ along the level sets of another function $\cE : \BZ \to \R$.

\begin{lemma}[co-area formula \cite{Federer1959curvature}]
Let $\cE : \BZ \to \R$ be a Lipschitz function and $g \in L^1(\BZ)$, then
\begin{equation*}
\int_{\BZ} g(\bk) | \nabla \cE (\bk)| \rd \bk = \int_{\R} \left( \int_{\cE^{-1} \{ \varepsilon \}} g(\bk) \rd \sigma(\bk) \right) \rd \varepsilon.
\end{equation*}
\end{lemma}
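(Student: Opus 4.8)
The plan is to recover this statement along the classical route to Federer's co-area formula \cite{Federer1959curvature}; I would only need to assemble the standard ingredients. I would establish the identity successively for affine $\cE$, then for $\cC^1$ functions, then for Lipschitz functions, and finally pass from continuous $g$ to general $g\in L^1(\BZ)$. For an affine map $\ell(\bk)=\ba\cdot\bk+c$ the formula is Fubini's theorem in disguise: if $\ba=\bnull$ both sides vanish, and if $\ba\neq\bnull$ an orthogonal change of variables sending $\ba/|\ba|$ onto $\be_d$ makes $\ell$ depend on $k_d$ only, the level sets $\ell^{-1}\{\eps\}$ become hyperplanes on which $\rd\sigma$ is $(d-1)$-dimensional Lebesgue measure, $|\nabla\ell|\equiv|\ba|$ is constant, and the substitution $\eps=|\ba|\,k_d+c$ concludes. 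Throughout the rest I would repeatedly invoke the elementary \emph{co-area inequality}
\[
\int_{\R}\mathcal H^{d-1}\!\big(A\cap\cE^{-1}\{\eps\}\big)\,\rd\eps\;\le\;C_d\,\mathrm{Lip}(\cE)\,|A|,\qquad A\subset\BZ\ \text{measurable},
\]
which follows from a Vitali covering argument and is the only genuinely non-trivial ingredient.

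For $\cC^1$ functions, a partition of unity first reduces matters to $g$ supported in a small ball. Near a point $\bk_0$ with $\nabla\cE(\bk_0)\neq\bnull$ I would localize to a ball on which $|\nabla\cE|$ is bounded away from zero, cover it by a fine grid of cubes $Q_i$ with centres $\bk_i$, apply the affine case to the tangent maps $\ell_i(\bk)=\cE(\bk_i)+\nabla\cE(\bk_i)\cdot(\bk-\bk_i)$, and control the errors — both the variation of $|\nabla\cE|$ and the mismatch between the level sets of $\cE$ and those of $\ell_i$, the latter via the co-area inequality — using the uniform continuity of $\nabla\cE$ on the compact torus $\BZ$; refining the grid gives the identity on $\{\nabla\cE\neq\bnull\}$. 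On the critical set $Z:=\{\nabla\cE=\bnull\}$ the left-hand integrand vanishes, and I would check that its right-hand contribution vanishes too: covering $Z$ by balls $B_j$ on which $|\nabla\cE|<\theta$, so that $\cE|_{B_j}$ is $\theta$-Lipschitz, the co-area inequality gives $\int_\R\mathcal H^{d-1}(B_j\cap\cE^{-1}\{\eps\})\,\rd\eps\le C_d\theta|B_j|$; a Vitali covering keeps $\sum_j|B_j|$ bounded, and since $\int_\R\mathcal H^{d-1}(Z\cap\cE^{-1}\{\eps\})\,\rd\eps$ is independent of $\theta$ yet bounded by $C_d\theta|\BZ|$ for every $\theta>0$, it must be zero, i.e. $\mathcal H^{d-1}(Z\cap\cE^{-1}\{\eps\})=0$ for a.e.\ $\eps$.

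To pass from $\cC^1$ to Lipschitz $\cE$, I would mollify into $\cC^\infty$ functions $\cE_\eta\to\cE$ uniformly with $\nabla\cE_\eta\to\nabla\cE$ in $L^1(\BZ)$ and a.e.\ along a subsequence, and take limits in the formula already known for $\cE_\eta$, the co-area inequality again supplying the domination on the right-hand side. Finally, since the identity is linear and monotone in $g$ and holds for continuous $g$, it extends by monotone convergence to indicators of open sets, then to Borel sets, to simple functions, and to all of $g\in L^1(\BZ)$. The step I expect to be the main obstacle is the treatment of the critical set $Z$ together with the smooth-to-Lipschitz passage: both rest on the co-area inequality, whose proof — a covering argument estimating $(d-1)$-dimensional slices of balls — is the technical heart of the matter, everything else being bookkeeping around Fubini's theorem and mollification. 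I note that in the applications made below near the Fermi level, where $\cE$ will be $\cC^\infty$ (indeed real-analytic) with non-vanishing gradient on a neighbourhood of the level sets involved by Lemma~\ref{lem:coverBZ}, only the simplest $\cC^1$-submersion case is actually needed.
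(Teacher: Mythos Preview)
The paper does not prove this lemma at all: it is stated as a classical result with a citation to Federer \cite{Federer1959curvature}, and used as a black box in what follows. Your outline is a reasonable sketch of the standard proof, and your closing remark is apt --- for the paper's actual applications (Lemmas~\ref{lemma:diff_integral_param}, \ref{lemma:DOS}, \ref{lem:vol_Sndelta}) only the $\cC^1$-submersion case on a neighbourhood of the Fermi level is ever invoked, so the delicate parts of your plan (the critical set and the Lipschitz passage) are not needed here.
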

Setting $f = g | \nabla \cE|$, we deduce that for all $f: \BZ \to \R$ such that
$\bk \mapsto \frac{f(\bk)}{|\nabla \cE(\bk)|} \in L^{1}(\BZ)$, then
\begin{equation} \label{eq:coarea2}
\int_{\BZ} f(\bk) \rd \bk = \int_{\R} \left( \int_{\cE^{-1} \{
\varepsilon \}} \frac{f(\bk)}{|\nabla \cE(\bk)|} \rd
\sigma(\bk) \right) \rd \varepsilon.
\end{equation}
This allows us to differentiate functions defined as integrals on
the sets $\cS_n(\eps)$ and $\BZ_n(\eps)$ with respect to the energy
level $\eps$.
\begin{lemma}
\label{lemma:diff_integral_param}
Let $f \in \cC^p(\BZ, \R)$. Under Assumptions 1 and 2 and with the notation of Lemma \ref{lem:coverBZ}, the maps $F_n: (\varepsilon_F - \delta_0, \varepsilon_F + \delta_0) \to \R$, ${\underline{M}} \le n \le {\overline{M}}$, defined by
\begin{equation*}
\forall \varepsilon \in (\varepsilon_F - \delta_0, \varepsilon_F + \delta_0), \quad	F_n(\varepsilon) := \int_{\cS_n(\varepsilon)} f(\bk) \rd \sigma (\bk),
\end{equation*}
are of class $\cC^p$, and we have
\begin{equation}
\label{eq:F'_eps}
\forall \varepsilon \in (\varepsilon_F - \delta_0, \varepsilon_F + \delta_0), \quad	F_n'(\varepsilon) = \int_{\cS_n(\varepsilon)} \dfrac{\div \left( f(\bk) \frac{\nabla \varepsilon_{n \bk}}{|\nabla \varepsilon_{n \bk} |} \right) }{| \nabla \varepsilon_{n \bk} |} \rd \sigma (\bk).
\end{equation}
\end{lemma}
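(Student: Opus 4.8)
The plan is to reduce $F_n$ to an integral over a \emph{fixed}, $\varepsilon$-independent compact domain by flattening the moving Fermi sheet $\cS_n(\varepsilon)$ --- which yields the $\cC^p$ regularity --- and then to extract the derivative formula \eqref{eq:F'_eps} from the divergence theorem together with the co-area consequence \eqref{eq:coarea2}. Fix $\underline{M} \le n \le \overline{M}$ and abbreviate $\cE_n : \bk \mapsto \varepsilon_{n\bk}$. By Lemma~\ref{lem:coverBZ}~(iii) and (v) (which is where Assumptions 1 and 2 enter), $\cE_n$ is real-analytic on the open set $\widetilde{\cS}_n(\varepsilon_F,\delta_0)$ with $|\nabla\cE_n| \ge C_\nabla > 0$ there; hence $\cE_n$ is a submersion on this set, and for every $\varepsilon$ in $I := (\varepsilon_F - \delta_0, \varepsilon_F + \delta_0)$ the level set $\cS_n(\varepsilon) = \cE_n^{-1}(\{\varepsilon\})$ is a non-empty smooth compact hypersurface contained in $\widetilde{\cS}_n(\varepsilon_F,\delta_0)$ (Lemma~\ref{lem:coverBZ}~(iv)).

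\emph{Regularity of $F_n$.} Fix $\varepsilon_* \in I$. Consider on $\widetilde{\cS}_n(\varepsilon_F,\delta_0)$ the smooth vector field $X := \nabla\cE_n / |\nabla\cE_n|^2$ and its flow $\phi_t$; since $\tfrac{d}{dt}\,\cE_n(\phi_t(\bk)) = \nabla\cE_n \cdot X = 1$, one has $\cE_n\circ\phi_t = \cE_n + t$, so that, for $|t|$ small enough (uniformly over $\cS_n(\varepsilon_*)$, a compact subset of the open set $\widetilde{\cS}_n(\varepsilon_F,\delta_0)$), $\phi_t$ restricts to a diffeomorphism $\Psi_t : \cS_n(\varepsilon_*) \to \cS_n(\varepsilon_* + t)$ (surjectivity by flowing backwards), jointly $\cC^\infty$ in $(t,\cdot)$ by smooth dependence of ODE flows on time and initial data. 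Pulling back the Hausdorff measure, $F_n(\varepsilon_* + t) = \int_{\cS_n(\varepsilon_*)} (f\circ\Psi_t)\, J_t \,\rd\sigma$, where the surface Jacobian $J_t > 0$ is $\cC^\infty$ jointly in $(t,\cdot)$. The domain $\cS_n(\varepsilon_*)$ is now fixed and compact and the integrand is $\cC^p$ jointly in $(t,\cdot)$ (as $f \in \cC^p$), so one may differentiate $p$ times under the integral sign; since $\varepsilon_* \in I$ was arbitrary, $F_n \in \cC^p(I,\R)$. The same argument shows that $\varepsilon \mapsto \int_{\cS_n(\varepsilon)} g\,\rd\sigma$ belongs to $\cC^q(I,\R)$ for any $g \in \cC^q(\widetilde{\cS}_n(\varepsilon_F,\delta_0),\R)$; this will be used with $q = p-1$.

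\emph{The derivative formula.} Assume $p \ge 1$ (for $p = 0$ only the continuity just proved is claimed). Fix $a \in I$, pick a closed sub-interval $[a,b] \subset I$, and choose $\chi \in \cC^\infty(\BZ,[0,1])$ with $\chi \equiv 1$ on a neighborhood of $\bigcup_{t \in [a,b]} \cS_n(t)$ and $\supp\chi \subset \widetilde{\cS}_n(\varepsilon_F,\delta_0)$; set $W := \chi\, f\, \nabla\cE_n/|\nabla\cE_n|$, a compactly supported $\cC^p$ vector field on $\BZ$. For $\varepsilon \in (a,b]$, apply the divergence theorem on the slab $\Omega_\varepsilon := \{\bk \in \widetilde{\cS}_n(\varepsilon_F,\delta_0) : a < \cE_n(\bk) < \varepsilon\}$: on the two faces $\cS_n(\varepsilon)$, $\cS_n(a)$ the outward unit normal of $\Omega_\varepsilon$ equals $\pm\nabla\cE_n/|\nabla\cE_n|$ and $\chi \equiv 1$, so $W\cdot\nu = \pm f$ there, while $W$ vanishes near the lateral part of $\partial\Omega_\varepsilon$; hence $\int_{\Omega_\varepsilon}\div W\,\rd\bk = F_n(\varepsilon) - F_n(a)$. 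On the other hand, the co-area consequence \eqref{eq:coarea2} (applied with $\cE := \cE_n$, extended arbitrarily to a Lipschitz function off $\widetilde{\cS}_n(\varepsilon_F,\delta_0)$, which is immaterial since $\div W$ is supported where $\chi \ne 0$) gives $\int_{\Omega_\varepsilon}\div W\,\rd\bk = \int_a^\varepsilon \big( \int_{\cS_n(t)} \tfrac{\div W}{|\nabla\cE_n|}\,\rd\sigma \big)\,\rd t = \int_a^\varepsilon \big( \int_{\cS_n(t)} \tfrac{\div(f\,\nabla\cE_n/|\nabla\cE_n|)}{|\nabla\cE_n|}\,\rd\sigma \big)\,\rd t$, the last equality because $\chi \equiv 1$ near each $\cS_n(t)$. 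The inner integral is a continuous function of $t$ by the end of the previous step (with $q = p-1$ and $g := \div(f\,\nabla\cE_n/|\nabla\cE_n|)/|\nabla\cE_n| \in \cC^{p-1}$), so the fundamental theorem of calculus yields \eqref{eq:F'_eps}.

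\emph{Main difficulty.} The substance is in the second step: flattening the whole compact hypersurface $\cS_n(\varepsilon)$ --- possibly with several connected components --- in a way that depends on $\varepsilon$ smoothly enough to allow $p$-fold differentiation under the integral. This rests squarely on Assumption 2 (non-vanishing band gradient, Lemma~\ref{lem:coverBZ}~(iii)) and on Assumption 1, which through Lemma~\ref{lem:coverBZ}~(v) guarantees that $\cE_n$ is genuinely smooth near the sheet rather than merely continuous. The third step is routine bookkeeping; the only points needing care are the role of the cutoff $\chi$, which both discards the lateral boundary of the slab in the divergence theorem and makes the globally-stated co-area formula \eqref{eq:coarea2} applicable even though $\cE_n$ is smooth only near $\cS_n(\varepsilon_F)$.
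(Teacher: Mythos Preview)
Your proof is correct. The derivative-formula step is essentially the paper's own argument: convert the surface integral to a volume integral via the divergence theorem, slice with the co-area formula~\eqref{eq:coarea2}, and read off $F_n'$. The paper absorbs the cutoff into $f$ (assuming $f$ compactly supported in $\widetilde\cS_n(\varepsilon_F,\delta_0)$) and integrates over the full sub-level set $\{\varepsilon_{n\bk}<\varepsilon\}$ rather than a slab; your $\chi$ plays the same role.

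Where you diverge from the paper is the regularity step. The paper does not introduce the gradient flow $\phi_t$ at all; it simply writes
\[
F_n(\varepsilon)=\int_{-\infty}^{\varepsilon}\Big(\int_{\cS_n(\varepsilon')} g\,\rd\sigma\Big)\rd\varepsilon',\qquad g:=\frac{\div(f\,\nabla\varepsilon_{n}/|\nabla\varepsilon_{n}|)}{|\nabla\varepsilon_{n}|}\in\cC^{p-1},
\]
and then \emph{iterates} this representation $p$ times to conclude $F_n\in\cC^p$. This is shorter but leaves the base case (continuity of $\varepsilon\mapsto\int_{\cS_n(\varepsilon)}g\,\rd\sigma$ for merely continuous $g$) implicit. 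Your flow/pullback argument handles all regularity levels uniformly and makes that base case explicit, at the cost of invoking smooth dependence of ODE flows and the pullback of the Hausdorff measure. One minor remark: since $[a,\varepsilon]\subset I$, your slab $\Omega_\varepsilon=\{a<\varepsilon_{n\bk}<\varepsilon\}$ has boundary exactly $\cS_n(a)\cup\cS_n(\varepsilon)$ in the torus $\BZ$ --- there is no lateral piece --- so the cutoff $\chi$ is needed only to make $W$ globally defined and $\cC^p$, not to kill a lateral boundary term.
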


\begin{proof}
Using suitable cut-off functions, it is sufficient to prove the result for $f$ compactly
supported in $\widetilde \cS_{n}(\eps_{F},\delta_{0})$. The outgoing normal unit vector of
$\cS_n(\varepsilon)$ at $\bk$ (oriented so that its interior is $\{\bk \in \BZ, \eps_{n
\bk} < \eps\}$) is $\nu_\bk := \nabla \varepsilon_{n \bk} / | \nabla \varepsilon_{n
\bk} |$. Using the divergence theorem, we get
\begin{align*}
F_n(\varepsilon) = \int_{\cS_n(\varepsilon)} f(\bk) \rd \sigma (\bk) & = \int_{\cS_n(\varepsilon)} f(\bk) \frac{\nabla \varepsilon_{n \bk}}{| \nabla \varepsilon_{n \bk} | } \cdot \nu_\bk \rd \sigma(\bk)
= \int_{ \{ \bk \in \BZ, \ \varepsilon_{n \bk} < \varepsilon \} } \div \left(  f(\bk) \frac{\nabla \varepsilon_{n \bk}}{| \nabla \varepsilon_{n \bk} | } \right) \rd \bk.
\end{align*}
We now use the co-area formula~\eqref{eq:coarea2} with $\widetilde{f}(\bk) := \1
(\varepsilon_{n \bk} \le \varepsilon) \,\div \left(  f(\bk)
\frac{\nabla \varepsilon_{n \bk}}{| \nabla \varepsilon_{n
\bk} | } \right) $ and $\cE(\bk) = \eps_{n \bk}$, so that
\begin{align*}
F_n(\varepsilon)  & = \int_{-\infty}^{\varepsilon} \left( \int_{\cS_n(\varepsilon')} \dfrac{\div \left( f(\bk) \frac{\nabla \varepsilon_{n \bk}}{| \nabla \varepsilon_{n \bk} |} \right) }{| \nabla \varepsilon_{n \bk} |} \rd \sigma (\bk) \right) \rd \varepsilon'.
\end{align*}
Differentiating this expression leads to \eqref{eq:F'_eps}, and
iterating $p$ times leads to the result.
\end{proof}

Formally, using the co-area formula on the integrated density of
states $\cN(\eps)$ would yield
\begin{align*}
\cN(\eps) = \frac{1}{|\BZ|}\sum_{n \in \N^*} \int_{-\infty}^{\eps} \left(\int_{\cS_{n}(\eps')}\frac 1 {|\nabla\eps_{n\bk}|} \rd \sigma(\bk)\right) \rd \eps',
\end{align*}
but the integrand may not be well-defined for all $\eps' < \eps$. This argument is however
valid close to the Fermi surface, and we therefore have the following result.
\begin{lemma}
\label{lemma:DOS} Under Assumptions 1 and 2 and with the notation of Lemma \ref{lem:coverBZ},
the integrated density of states $\mathcal N$ is smooth on
$(\eps_{F} - \delta_{0}, \eps_{F} + \delta_{0})$. Moreover, we have
\begin{align*}
\forall \varepsilon \in (\varepsilon_F - \delta_0, \varepsilon_F + \delta_0), \quad  \mathcal D(\eps) := \mathcal N'(\eps) = \frac {1} {|\BZ|}\sum_{n={\underline{M}}}^{{\overline{M}}}\int_{\cS_{n}(\eps)}
\frac{1}{|\nabla \eps_{n \bk}|} \rd\sigma(\bk) \quad > 0.
\end{align*}
  \end{lemma}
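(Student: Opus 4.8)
The plan is to make rigorous, in a neighborhood of $\eps_F$, the formal co-area computation displayed just above the statement: the key observation is that for $\eps$ close to $\eps_F$ only the finitely many bands $n\in\{\underline M,\dots,\overline M\}$ contribute to the $\eps$-dependence of $\cN$. First I would carry out this reduction. Fix $\eps\in(\eps_F-\delta_0,\eps_F+\delta_0)$. For $n\notin\{\underline M,\dots,\overline M\}$, item~\ref{lem:coverBZ_0} of Lemma~\ref{lem:coverBZ} gives $\widetilde\cS_n(\eps_F,\delta_0)=\emptyset$, so the continuous map $\bk\mapsto\eps_{n\bk}$ on the connected torus $\BZ$ never takes a value in $(\eps_F-\delta_0,\eps_F+\delta_0)$; its range therefore lies entirely in $(-\infty,\eps_F-\delta_0]$ or entirely in $[\eps_F+\delta_0,+\infty)$. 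Using the ordering $\eps_{n\bk}\le\eps_{m\bk}$ for $n\le m$, the growth estimate~\eqref{eq:comparison_eigenvalues}, and the non-emptiness of $\cS_{\underline M}(\eps_F)$ and $\cS_{\overline M}(\eps_F)$, one checks that the first case holds exactly for $n<\underline M$ and the second exactly for $n>\overline M$. Hence $\fint_\BZ\1(\eps_{n\bk}\le\eps)\,\rd\bk$ equals $1$ for $n<\underline M$ and $0$ for $n>\overline M$, independently of $\eps$ in the interval; and since each $\cS_n(\eps)$, $\underline M\le n\le\overline M$, is Lebesgue-null (item~\ref{lem:coverBZ_3}), we may replace $\le$ by $<$ on those bands to get, for $\eps\in(\eps_F-\delta_0,\eps_F+\delta_0)$,
\begin{equation*}
\cN(\eps)=(\underline M-1)+\sum_{n=\underline M}^{\overline M}\cN_n(\eps),\qquad \cN_n(\eps):=\fint_\BZ\1(\eps_{n\bk}<\eps)\,\rd\bk .
\end{equation*}
It then suffices to prove that each $\cN_n$, $\underline M\le n\le\overline M$, is smooth on the interval with $\cN_n'(\eps)=\tfrac1{|\BZ|}\int_{\cS_n(\eps)}|\nabla\eps_{n\bk}|^{-1}\,\rd\sigma(\bk)$.

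For the differentiation I would fix $\eps_0$ in the interval, pick $c_1\in(\eps_F-\delta_0,\eps_0)$, and for $\eps$ near $\eps_0$ (staying in $(c_1,\eps_F+\delta_0)$) split $\1(\eps_{n\bk}<\eps)=\1(\eps_{n\bk}<c_1)+\1(c_1\le\eps_{n\bk}<\eps)$. The first term is $\eps$-independent. The second is supported in $U_n:=\widetilde\cS_n(\eps_F,\delta_0)$, on which $\bk\mapsto\eps_{n\bk}$ is smooth (item~\ref{lem:coverBZ_4}) and $|\nabla\eps_{n\bk}|\ge C_\nabla>0$ (item~\ref{lem:coverBZ_2}); applying the co-area formula~\eqref{eq:coarea2} on the open set $U_n$ with $\cE(\bk)=\eps_{n\bk}$ and the bounded function $f=\1(c_1\le\eps_{n\bk}<\eps)$ (integrable against $|\nabla\eps_{n\bk}|^{-1}$ since $|U_n|<\infty$), and noting that $\eps_n^{-1}\{\eps'\}\cap U_n$ equals $\cS_n(\eps')$ for $\eps'\in(\eps_F-\delta_0,\eps_F+\delta_0)$ and is empty otherwise, gives
\begin{equation*}
\int_{U_n}\1(c_1\le\eps_{n\bk}<\eps)\,\rd\bk=\int_{c_1}^{\eps}F_n(\eps')\,\rd\eps',\qquad F_n(\eps'):=\int_{\cS_n(\eps')}\frac{\rd\sigma(\bk)}{|\nabla\eps_{n\bk}|},
\end{equation*}
whence $\cN_n(\eps)=\cN_n(c_1)+\tfrac1{|\BZ|}\int_{c_1}^{\eps}F_n(\eps')\,\rd\eps'$. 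Since $|\nabla\eps_{n\bk}|^{-1}$ is smooth on $U_n$, it extends to an element of $\cC^\infty(\BZ,\R)$ agreeing with it on a neighborhood of $\bigcup_{\eps'\text{ near }\eps_0}\cS_n(\eps')$ (multiply by a cutoff supported in $U_n$), and Lemma~\ref{lemma:diff_integral_param} applied to this extension shows $F_n\in\cC^p$ near $\eps_0$ for every $p$; hence $F_n$, and therefore $\cN_n$, is $\cC^\infty$ near $\eps_0$ with $\cN_n'(\eps)=\tfrac1{|\BZ|}F_n(\eps)$. As $\eps_0$ was arbitrary, summing over $\underline M\le n\le\overline M$ yields the smoothness of $\cN$ on $(\eps_F-\delta_0,\eps_F+\delta_0)$ and the announced formula for $\cD=\cN'$. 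Positivity is then immediate: for each $n$ in the range (non-empty since $\underline M\le\overline M$), $\cS_n(\eps)$ is a non-empty compact manifold with $|\cS_n(\eps)|_{\rm Hauss}>0$ (item~\ref{lem:coverBZ_3}) and $|\nabla\eps_{n\bk}|$ is continuous in $\bk$ hence bounded on the compact set $\cS_n(\eps)$, so $|\nabla\eps_{n\bk}|^{-1}$ is bounded below there by a positive constant, $F_n(\eps)>0$, and $\cD(\eps)>0$.

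The step I expect to demand the most care is the first one: cleanly peeling off the $\eps$-independent bulk contribution of the bands that avoid $\eps_F$, and justifying that the formal co-area manipulation preceding the statement becomes rigorous once one works only on the neighborhood $\widetilde\cS_n(\eps_F,\delta_0)$, where the band is a genuine submersion with gradient bounded away from zero. The remaining ingredients — the co-area formula, Lemma~\ref{lemma:diff_integral_param} for the smoothness of $F_n$, and items~\ref{lem:coverBZ_2}--\ref{lem:coverBZ_4} of Lemma~\ref{lem:coverBZ} for the submersion property, compactness and positivity — are then applied in a routine way.
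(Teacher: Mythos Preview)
Your proof is correct and follows essentially the same route as the paper's: reduce to the finitely many bands $\underline M\le n\le\overline M$, express $\cN(\eps)-\cN(c_1)$ via the co-area formula on the region where $\eps_{n\bk}$ is a submersion, and then invoke Lemma~\ref{lemma:diff_integral_param} and Lemma~\ref{lem:coverBZ} for smoothness and positivity. Your write-up is considerably more detailed than the paper's terse argument (in particular, you spell out why the irrelevant bands contribute constants, why the co-area hypothesis $f/|\nabla\cE|\in L^1$ holds, and why a cutoff extension is needed before applying Lemma~\ref{lemma:diff_integral_param}), but the underlying strategy is the same.
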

  \begin{proof}
  Applying the co-area formula with $f(\bk) = \1(\eps_{n \bk} \le \eps_F)$, we have
  \begin{align*}
  \mathcal N(\eps) := \sum_{n={\underline{M}}}^{{\overline{M}}} \fint_{\BZ} \1(\eps_{n \bk}
  \leq \eps) \rd \bk
  &= \mathcal N(\eps_{F} - \delta_{0}) + \frac 1 {|\BZ|}
  \sum_{n={\underline{M}}}^{{\overline{M}}} \int_{\eps_{F} - \delta_{0}}^{\eps} \rd \eps'
  \int_{\cS_{n}(\eps')} \frac 1 {|\nabla \eps_{n \bk}|} \rd \sigma(\bk),
  \end{align*}
  from which we get
  \begin{equation} \label{eq:def:Deps}
  \mathcal D(\eps) := \mathcal N'(\eps) = \frac 1 {|\BZ|}
  \sum_{n={\underline{M}}}^{{\overline{M}}} \int_{\cS_{n}(\eps)} \frac 1 {|\nabla \eps_{n \bk}|} \rd \sigma(\bk).
  \end{equation}
  By Lemmas \ref{lem:coverBZ} and \ref{lemma:diff_integral_param}, this function is
  smooth and positive.
  \end{proof}

  The function $\cD$ appearing in~\eqref{eq:def:Deps} is called the {\em density of states}.
  This lemma justifies our Assumptions 1 and 2 as natural assumptions to ensure a
  smooth density of states at the Fermi level. The presence of crossings at the Fermi level
  may indeed yield singularities of the density of states, as is well-known for instance in
  graphene. Similarly, a zero of the band gradient (leading to so-called ``flat bands'') produces \textit{van Hove singularities} in the density of
  states.

    \medskip
    
  Following the same steps as in Lemma~\ref{lemma:DOS}, we obtain that the integrated density of energy defined in~\eqref{eq:IDoE} is also smooth on $(\eps_{F} - \delta_{0}, \eps_{F} + \delta_{0})$, and that its derivative (the {\em density of energy}) satisfies
  \begin{equation} \label{eq:DoE}
      \forall \varepsilon \in (\varepsilon_F - \delta_0, \varepsilon_F + \delta_0), \quad  E'(\varepsilon) = \varepsilon \cD(\varepsilon).
  \end{equation}

  We also record here the following technical lemma on the volume of the
  sets $\widetilde{\cS_n}(\varepsilon, \delta)$, which will be used in
  our analysis. It is an easy consequence of the co-area formula.
  \begin{lemma}
  \label{lem:vol_Sndelta} Under Assumptions 1 and 2 and with the notation of Lemma \ref{lem:coverBZ},
  there exists $C \in \R_+$ such that, for all $\varepsilon \in (\varepsilon_F -
  \delta_0, \varepsilon_F + \delta_{0})$ and all $0 \le \delta <
  \delta_0$ such that $(\varepsilon-\delta, \varepsilon+\delta) \subset
  (\varepsilon_F - \delta_0, \varepsilon_F + \delta_0)$, it holds that $|
  \widetilde{\cS_n}(\varepsilon, \delta) | \le C
  \delta$ for all ${\underline{M}} \le n \le {\overline{M}}$.
  \end{lemma}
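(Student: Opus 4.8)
The plan is to obtain this estimate as a direct application of the co-area formula~\eqref{eq:coarea2}, combined with the smoothness of the density of states proved in Lemma~\ref{lemma:DOS}. As a preliminary reduction, note that the constant $\delta_0$ produced by Lemma~\ref{lem:coverBZ} may be replaced by any smaller positive number without affecting its conclusions; I would therefore assume at the outset that the conclusions of Lemmas~\ref{lem:coverBZ} and~\ref{lemma:DOS} hold with $\delta_0$ replaced by $2\delta_0$. In particular, $\bk \mapsto \varepsilon_{n\bk}$ is then smooth on the open set $U := \widetilde{\cS}_n(\varepsilon_F, 2\delta_0)$ (assertion~\eqref{lem:coverBZ_4} of Lemma~\ref{lem:coverBZ}), with $|\nabla_\bk \varepsilon_{n\bk}| \geq C_\nabla > 0$ there (assertion~\eqref{lem:coverBZ_2}), and $\mathcal D$ is smooth on $(\varepsilon_F - 2\delta_0, \varepsilon_F + 2\delta_0)$.

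Now fix $\underline{M} \leq n \leq \overline{M}$ and $\varepsilon,\delta$ as in the statement, so that $(\varepsilon - \delta, \varepsilon + \delta) \subset (\varepsilon_F - \delta_0, \varepsilon_F + \delta_0)$. Since $\widetilde{\cS}_n(\varepsilon,\delta) = \{\bk \in \cB : \varepsilon_{n\bk} \in (\varepsilon - \delta, \varepsilon + \delta)\}$, its closure is contained in $\{\bk \in \cB : \varepsilon_{n\bk} \in [\varepsilon_F - \delta_0, \varepsilon_F + \delta_0]\}$, which is a compact subset of $U$. I would then apply the co-area formula~\eqref{eq:coarea2} with $\cE = \varepsilon_{n\bk}$ and $f = \1_{\widetilde{\cS}_n(\varepsilon,\delta)}$ (a function compactly supported in $U$, on which $\cE$ is smooth, so that $f / |\nabla_\bk\varepsilon_{n\bk}| \leq C_\nabla^{-1}\1_U$ is integrable). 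Because $\cS_n(\varepsilon') \cap \widetilde{\cS}_n(\varepsilon,\delta)$ is empty when $\varepsilon' \notin (\varepsilon - \delta, \varepsilon + \delta)$ and equals $\cS_n(\varepsilon')$ when $\varepsilon' \in (\varepsilon - \delta, \varepsilon + \delta)$, this gives
\begin{equation*}
| \widetilde{\cS}_n(\varepsilon, \delta) | \;=\; \int_{\varepsilon - \delta}^{\varepsilon + \delta} \left( \int_{\cS_n(\varepsilon')} \frac{\rd\sigma(\bk)}{|\nabla_\bk \varepsilon_{n\bk}|} \right) \rd\varepsilon' .
\end{equation*}

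To finish, I would bound the inner integral by the full density of states: by Lemma~\ref{lemma:DOS}, $\int_{\cS_n(\varepsilon')} |\nabla_\bk \varepsilon_{n\bk}|^{-1}\, \rd\sigma(\bk)$ is one of the non-negative summands making up $|\cB|\,\mathcal D(\varepsilon')$, hence it is $\leq |\cB|\,\mathcal D(\varepsilon')$ for every $\varepsilon' \in (\varepsilon - \delta, \varepsilon + \delta)$. Therefore
\begin{equation*}
| \widetilde{\cS}_n(\varepsilon, \delta) | \;\leq\; |\cB| \int_{\varepsilon - \delta}^{\varepsilon + \delta} \mathcal D(\varepsilon')\, \rd\varepsilon' \;\leq\; 2\,|\cB|\,\delta\; \Big(\,\sup_{[\varepsilon_F - \delta_0,\, \varepsilon_F + \delta_0]} \mathcal D\,\Big) ,
\end{equation*}
and the supremum is finite since $\mathcal D$ is continuous on the compact interval $[\varepsilon_F - \delta_0, \varepsilon_F + \delta_0]$; this proves the claim with $C := 2\,|\cB|\,\sup_{[\varepsilon_F - \delta_0, \varepsilon_F + \delta_0]} \mathcal D$, which depends neither on $n$ nor on $\varepsilon$ nor on $\delta$. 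I do not anticipate any genuine difficulty: the argument is an immediate consequence of the co-area formula and the regularity of $\mathcal D$ already established, the only mild point being the uniformity of $C$ in $\varepsilon$, $\delta$ and $n$, which is secured by the preliminary shrinking of $\delta_0$ and by bounding each surface integral by the total density of states.
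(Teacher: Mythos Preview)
Your proof is correct and follows essentially the same approach as the paper: apply the co-area formula with $\cE = \varepsilon_{n\bk}$ and $f$ the indicator of $\widetilde{\cS}_n(\varepsilon,\delta)$, then bound the resulting inner integral uniformly in $\varepsilon'$. The only cosmetic difference is that the paper bounds $\int_{\cS_n(\varepsilon')} |\nabla \varepsilon_{n\bk}|^{-1}\,\rd\sigma$ directly via the continuity established in Lemma~\ref{lem:coverBZ}, whereas you dominate it by the full density of states $|\cB|\,\mathcal D(\varepsilon')$ and then use the continuity of $\mathcal D$; your preliminary shrinking of $\delta_0$ is a harmless extra precaution that the paper omits.
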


  \begin{proof}
  We apply the co-area formula~\eqref{eq:coarea2} with $f(\bk) := \1(\varepsilon -
  \delta \le \varepsilon_{n \bk} \le \varepsilon + \delta)$ and $\cE(\bk) = \varepsilon_{n\bk}$, and get that
  \begin{equation*}
  | \widetilde{\cS}_n(\varepsilon, \delta) | = \int_{\varepsilon - \delta}^{\varepsilon + \delta} \left( \int_{\cS_n(\varepsilon')} \dfrac{1}{| \nabla \varepsilon_{n \bk} |} \rd \sigma (\bk) \right) \rd \varepsilon'.
  \end{equation*}
  From Lemma \ref{lem:coverBZ}, the map
  $\varepsilon' \mapsto \int_{\cS_n(\varepsilon')} | \nabla \varepsilon_{n \bk}
  |^{-1} \rd \sigma (\bk)$ is continuous and bounded on
  $(\eps_{F}-\delta_{0},\eps_{F}+\delta_{0})$. The proof follows.
  \end{proof}

\section{Interpolation methods}
\label{sec:interp}
In this section, we investigate methods based on the local interpolation of the functions
$\bk \mapsto \eps_{n \bk}$.  We consider families of linear
interpolation operators $\Pi^L : \cC^0(\BZ,\R) \to \cC^0(\BZ,\R)$ indexed by $L \in \N^*$, which strongly converge to the identity operator when $L$ goes to infinity, i.e.
$$
\forall f \in \cC^0(\BZ,\R), \quad \Pi^L f \mathop{\longrightarrow}_{L \to \infty} f \mbox{ in } \cC^0(\BZ,\R).
$$
We
say that $\left( \Pi^L \right)_{L \in \N^*}$ is of order $(p+1) \in \N$ if, for all
$\eta > 0$, there exists $C_\Pi^\eta \in \R_+$ such that, for all $p' \in \N$, all open sets
$\Omega \subset \cB$, and all $f \in \cC^{p'+1}(\Omega_\eta, \R)$, where
$\Omega_\eta := \left\{\bk \in \BZ, \, d(\bk, \Omega) \leq \eta\right\}$ is the
$\eta$-neighborhood of $\Omega$, it holds that
\begin{equation} \label{eq:f-Pif}
\sup_{\bk \in \Omega} | f(\bk) - \Pi^L f(\bk) | \le \dfrac{C_\Pi^\eta}{L^{\min(p,p')+1}}  \sup_{\bk \in \Omega_\eta} | f^{(p'+1)}(\bk) |.
\end{equation}
One of the most used interpolation operator is the \textit{linear
tetrahedron method} (and its {\em improved} version~\cite{Blochl1994} -see also~\cite{kawamura2014improved}- that will be studied in a future work). In this case, we choose a
sequence of uniform tetrahedral meshes $(\cT^L)_{L \in \N}$, and we define
$\Pi^L f$ as the piecewise linear function (linear on each tetrahedron
$T \in \cT^L$) interpolating $f$ at the vertices of $\cT^L$. In three
dimensions, a linear tetrahedron method constructed from a regular
$L \times L \times L$ mesh of the torus $\cB$ is of order $2$. Similarly, the quadratic method described in~\cite{methfessel1983analytic,boon1986singular}
and the cubic tetrahedron method described
in~\cite{Zaharioudakis2005quadratic} are of order $3$ and $4$
respectively. We chose this convention so that $p$ denotes the degree
of the polynomial in a usual polynomial interpolation (of order
$p+1$).

\begin{remark}[Local interpolation]
The approximation property above is local, in the sense that the
quality of the approximation at a point depends only on the
smoothness of the function near this point. This is necessary to
interpolate efficiently the functions $\eps_{n \bk}$, which are not smooth
across the whole Brillouin zone. By contrast, a Fourier
interpolation on the whole Brillouin zone would not satisfy this
condition: discontinuities in the interpolated function produce
Gibbs oscillations, which slow down the convergence of the Fourier
series even far from the point of discontinuity.
\end{remark}

\subsection{Error on the Integrated density of states and on the Fermi level}

Recall that the integrated density of states $\cN(\varepsilon)$ is defined in~\eqref{eq:IDoS}. In practice, we cannot compute $\cN(\varepsilon)$, but only an approximation of it. We therefore introduce
\begin{equation} \label{eq:IDoS^L}
\forall \varepsilon \in \R, \quad \cN^{L,q}(\varepsilon) := \sum_{n \in \N^*} \fint_\BZ \1 ( \varepsilon_{n \bk}^{L,q}\leq \varepsilon) \rd \bk \quad \textrm{with} \quad \varepsilon_{n \bk}^{L,q} := \Pi^{L,q} \left[ \varepsilon_{n \bk} \right],
\end{equation}
where $\left( \Pi^{L,q} \right)_{L \in \N^*}$ is a family of interpolation operators of order $q+1$. In practice, the
integral in~\eqref{eq:IDoS^L} is performed analytically (hence at low computational cost). Because of the smoothness of $\eps_{n\bk}$ near $\cS_{n}(\eps_{F})$, we are able to control the error on this function.
\begin{lemma}[Error on the integrated density of states] \label{lem:errorN}
Under Assumptions 1 and 2, there exist $C \in \R_+$ and $\delta > 0$ such that
\begin{equation*}
\forall L \in \N^*, \ \max_{\varepsilon \in [\varepsilon_F - \delta, \varepsilon_F +
\delta]} \left| \cN(\varepsilon) - {\cN}^{L,q}(\varepsilon) \right| \le \frac{C}{L^{q+1}}
\end{equation*}
\end{lemma}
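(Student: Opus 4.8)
The plan is to reduce the error to the finitely many bands whose Fermi sheet passes through a neighborhood of $\varepsilon_F$, and, on those bands, to combine the interpolation accuracy of $\Pi^{L,q}$ with the fact (Lemma~\ref{lem:vol_Sndelta}) that tubular neighborhoods of the Fermi sheets have volume proportional to their width.

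First I would fix $\delta_0$ as in Lemma~\ref{lem:coverBZ} and choose $\delta\in(0,\delta_0)$ small (to be shrunk finitely often). Using assertion~\ref{lem:coverBZ_0} of Lemma~\ref{lem:coverBZ}, the continuity of $\bk\mapsto\varepsilon_{n\bk}$, the connectedness of $\BZ$ and the ordering $\varepsilon_{1\bk}\le\varepsilon_{2\bk}\le\cdots$, one checks that $\varepsilon_{n\bk}\le\varepsilon_F-\delta_0$ on all of $\BZ$ when $n<\underline{M}$, and $\varepsilon_{n\bk}\ge\varepsilon_F+\delta_0$ on all of $\BZ$ when $n>\overline{M}$. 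Hence, for $\varepsilon\in[\varepsilon_F-\delta,\varepsilon_F+\delta]$, the bands $n\notin\{\underline{M},\dots,\overline{M}\}$ contribute the constants $1$ (for $n<\underline M$) and $0$ (for $n>\overline M$) to $\cN(\varepsilon)$. For the same bands and $L$ large, the matching terms of $\cN^{L,q}(\varepsilon)$ equal the same constants: for the finitely many bands whose range meets $[\varepsilon_F-2\delta_0,\varepsilon_F+2\delta_0]$ this follows from the strong convergence $\Pi^{L,q}\to\mathrm{Id}$ in $\cC^0(\BZ)$, while for the remaining, uniformly higher, bands it follows from the uniform bound $\sup_L\|\Pi^{L,q}\|_{\cC^0\to\cC^0}<\infty$ (Banach--Steinhaus) together with \eqref{eq:comparison_eigenvalues}, which keeps the interpolants above $\varepsilon_F+\delta$ (and makes $\cN^{L,q}(\varepsilon)$ finite in the first place). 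Thus $\cN(\varepsilon)-\cN^{L,q}(\varepsilon)=\sum_{n=\underline M}^{\overline M}\fint_\BZ\bigl[\1(\varepsilon_{n\bk}\le\varepsilon)-\1(\varepsilon_{n\bk}^{L,q}\le\varepsilon)\bigr]\,\rd\bk$ for $L\ge L_0$; the finitely many $L<L_0$ are absorbed into $C$ since $\cN$ and $\cN^{L,q}$ are uniformly bounded on $[\varepsilon_F-\delta,\varepsilon_F+\delta]$.

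Next I would estimate each summand. Fix $n\in\{\underline M,\dots,\overline M\}$. By assertion~\ref{lem:coverBZ_4} of Lemma~\ref{lem:coverBZ} (applied with $s=0$, as $V\in L^\infty_\per\subset L^2_\per$), the map $\varepsilon_n:\bk\mapsto\varepsilon_{n\bk}$ is real-analytic, in particular $\cC^\infty$, on the open set $\widetilde{\cS}_n(\varepsilon_F,\delta_0)$. I would then pick an open $\Omega_n$ with $\{\bk\in\BZ:|\varepsilon_{n\bk}-\varepsilon_F|\le\delta\}\subset\Omega_n$ whose $\eta$-neighborhood $(\Omega_n)_\eta$ has compact closure inside $\widetilde{\cS}_n(\varepsilon_F,\delta_0)$ for a suitable $\eta>0$ (possible for $\delta,\eta$ small by uniform continuity of $\varepsilon_n$ on the compact $\BZ$). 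Applying \eqref{eq:f-Pif} with $p'=q$ to $f=\varepsilon_n$ and $\Omega=\Omega_n$ gives $\sup_{\Omega_n}|\varepsilon_n-\varepsilon_n^{L,q}|\le C_nL^{-(q+1)}$, the constant being finite because the derivatives of $\varepsilon_n$ are bounded on the compact $\overline{(\Omega_n)_\eta}$. The integrand $|\1(\varepsilon_{n\bk}\le\varepsilon)-\1(\varepsilon_{n\bk}^{L,q}\le\varepsilon)|$ is nonzero only when $\varepsilon$ lies between $\varepsilon_{n\bk}$ and $\varepsilon_{n\bk}^{L,q}$, which forces $|\varepsilon_{n\bk}-\varepsilon|\le|\varepsilon_{n\bk}-\varepsilon_{n\bk}^{L,q}|$; for $L$ large this cannot occur outside $\Omega_n$ (by the global strong convergence, $\varepsilon$ being then bounded away from $\varepsilon_{n\bk}$), and inside $\Omega_n$ it forces $\bk\in\widetilde{\cS}_n(\varepsilon,C_nL^{-(q+1)})$, whose volume is at most $C_n'L^{-(q+1)}$ by Lemma~\ref{lem:vol_Sndelta} (once $C_nL^{-(q+1)}<\delta_0-\delta$). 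Summing $|\BZ|^{-1}C_n'L^{-(q+1)}$ over the finitely many $n\in\{\underline M,\dots,\overline M\}$ yields the bound for $L\ge L_0$, hence for all $L$ after enlarging $C$.

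I expect the main obstacle to be the bookkeeping in the first step, and specifically the claim that the infinitely many interpolated bands lying above $\varepsilon_F+\delta_0$ never dip below $\varepsilon_F+\delta$. This combines the uniform operator bound on $(\Pi^{L,q})_L$ (from Banach--Steinhaus, using that the order assumption forces $\Pi^{L,q}$ to reproduce constants, so that $\varepsilon_n^{L,q}=\min_\bk\varepsilon_{n\bk}+\Pi^{L,q}[\varepsilon_n-\min_\bk\varepsilon_{n\bk}]$) with the growth of the band minima in \eqref{eq:comparison_eigenvalues}, together with the mild refinement that the band widths are of lower order than $n^{2/d}$, which follows from min--max since $V$ is bounded and the free bands have width $O(n^{1/d})$. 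Everything else — ordering the small parameters, handling the open/closed and strict/non-strict inequalities around the Fermi sheets, and the two invocations of \eqref{eq:f-Pif} and Lemma~\ref{lem:vol_Sndelta} — is routine.
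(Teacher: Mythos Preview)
Your argument is correct and follows the same core strategy as the paper: show that the discrepancy $\1(\varepsilon_{n\bk}\le\varepsilon)-\1(\varepsilon_{n\bk}^{L,q}\le\varepsilon)$ is supported in a tube $\widetilde{\cS}_n(\varepsilon,O(L^{-(q+1)}))$ by combining the analyticity of $\varepsilon_n$ on $\widetilde{\cS}_n(\varepsilon_F,\delta_0)$ with the interpolation bound \eqref{eq:f-Pif}, and then invoke Lemma~\ref{lem:vol_Sndelta}. The executions differ in two minor respects. First, the paper avoids your choice of the sets $\Omega_n$ and the separate treatment of their complements: it uses the global Lipschitz regularity of $\bk\mapsto\varepsilon_{n\bk}$ (applying \eqref{eq:f-Pif} with $p'=0$ on all of $\BZ$) to obtain a coarse bound $\delta_\BZ^{L,q}:=\max_{n\le\overline M}\|\varepsilon_n-\varepsilon_n^{L,q}\|_\infty\to 0$, which already confines the discrepancy to $\widetilde{\cS}_n(\varepsilon,\delta_\BZ^{L,q})\subset\widetilde{\cS}_n(\varepsilon_F,\delta_0)$ for $L$ large, and only then bootstraps with $p'=q$ to reach the sharp width $\delta_\cS^{L,q}=O(L^{-(q+1)})$. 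This two–step localization is a bit slicker than carving out $\Omega_n$ by hand. Second, you are more careful than the paper about the infinitely many bands $n>\overline M$: the paper simply restricts $\delta_\BZ^{L,q}$ to $n\le\overline M$ and tacitly drops the higher bands from \eqref{eq:errorN1}, whereas your Banach--Steinhaus/bandwidth argument actually justifies this (the $O(n^{1/d})$ bandwidth claim is correct, since $r_n(\bk):=n$-th smallest $|\bk+\bK|$ is $1$-Lipschitz in $\bk$). One small point to tidy in your write-up: to get a uniform positive gap between $\varepsilon$ and $\varepsilon_{n\bk}$ outside $\Omega_n$ for all $\varepsilon\in[\varepsilon_F-\delta,\varepsilon_F+\delta]$, take $\Omega_n\supset\{|\varepsilon_n-\varepsilon_F|\le 2\delta\}$ rather than $\{|\varepsilon_n-\varepsilon_F|\le\delta\}$.
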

\begin{proof}
Let $\delta^{L,q}_\BZ$ be the maximum error between $\eps_{n \bk}$ and $\eps_{n \bk}^{L,q}$ on
the whole Brillouin zone, i.e.
\begin{equation*}
\delta^{L,q}_{\BZ} := \max_{n \le {\overline{M}}} \max_{\bk \in \BZ} |\varepsilon_{n \bk}^{L,q} - \varepsilon_{n \bk} |.
\end{equation*}
From the fact that $\bk \mapsto \varepsilon_{n \bk}$ is Lipschitz and~\eqref{eq:f-Pif} in the case $p' = 0$, we deduce that
$\lim_{L \to \infty} \delta^{L,q}_{\BZ} = 0$. For $L$ large enough, $\delta_{\BZ}^{L,q} < \delta_{0}/2$, where $\delta_0$ was defined in
Lemma \ref{lem:coverBZ}. Let $\eps \in [\eps_{F} - \delta_{0}/2, \eps_{F} + \delta_{0}/2]$. We have
\begin{equation} \label{eq:errorN1} \cN(\varepsilon) - \cN^{L,q}(\varepsilon) = \sum_{n \in \N^*} \fint_{\BZ} (\1 (\varepsilon_{n \bk} \leq
\varepsilon) - \1 ( \varepsilon_{n \bk}^{L,q} \leq \varepsilon)) \rd \bk.
\end{equation} The integrand in~\eqref{eq:errorN1} can only be nonzero in the \textit{discrepancy regions} where $\varepsilon_{n \bk} \le
\varepsilon < \varepsilon_{n \bk}^{L,q}$ or $\varepsilon^{L,q}_{n \bk} \le \varepsilon < \varepsilon_{n \bk}$. In these regions, it holds
that $| \varepsilon_{n \bk} - \varepsilon | \le | \varepsilon_{n \bk} - \varepsilon^{L,q}_{n \bk} |$, so that they are included in
$\widetilde \cS_{n}(\eps,\delta^{L,q}_{\BZ})$. We can therefore rewrite~\eqref{eq:errorN1} as
\begin{equation} \label{eq:errorcNs} \cN(\varepsilon) - \cN^{L,q}(\varepsilon) = \frac 1 {|\BZ|}\sum_{n \in \N^*} \int_{\widetilde
\cS_{n}(\eps,\delta^{L,q}_{\BZ})} (\1 (\varepsilon_{n \bk} \leq \varepsilon) - \1 ( \varepsilon_{n \bk}^{L,q} \leq \varepsilon))\rd \bk.
\end{equation} From Lemma \ref{lem:vol_Sndelta}, we easily deduce that $|\cN(\varepsilon) - \cN^{L,q}(\varepsilon)| \le C
\delta^{L,q}_{\BZ}$. This is however a very crude approximation, since $\delta^{L,q}_{\BZ}$ only decays as $L^{-1}$ (and not as $L^{-(q+1)}$ as
wanted). This comes from the fact that $\varepsilon_{n \bk}$ is Lipschitz
but not $\cC^1$ on $\cB$.  However, according to Lemma~\ref{lem:coverBZ},
$\varepsilon_{n \bk}$ is analytic on $\widetilde \cS_n(\varepsilon, \delta^{L,q}_\BZ)$. Hence,
by setting
\begin{equation*}
\delta^{L,q}_{\cS} := \max_{n \le {\overline{M}}} \max_{\bk \in \widetilde{\cS}_n(\varepsilon_F, \delta_0)} |\varepsilon_{n \bk}^{L,q} - \varepsilon_{n \bk} |,
\end{equation*}
we first deduce from~\eqref{eq:f-Pif} that $\delta^{L,q}_\cS = O(L^{-(q+1)})$, then that the integrand in~\eqref{eq:errorcNs} is non-zero
only on $\widetilde{\cS}_n(\varepsilon, \delta^{L,q}_\cS)$, which again from Lemma~\ref{lem:vol_Sndelta} is of Lebesgue measure
$O(L^{-(q+1)})$. 
\end{proof}

For all $L \in \N$, the function $\cN^{L,q}$ is continuous and non-decreasing from $\cN^{L,q}(-\infty) = 0$ to $\cN^{L,q}(+\infty) = +\infty$. However, it is not necessarily increasing, and the non-empty set $(\cN^{L,q})^{-1}(\{N\})$ may
contain more than one point. Our results however will be independent of the choice of the
approximated Fermi level $\varepsilon_F^{L,q} \in (\cN^{L,q})^{-1}(\{N\})$. We state the next lemma in a very general setting.

\begin{lemma}[Error on the Fermi level]~\label{lem:error_epsF} Under Assumptions 1 and 2, there is $\delta_{1} > 0$
    such that, for all $0 < \delta \leq \delta_{1}$ and all continuous function $\widetilde{\cN}_\delta: \R \to \R$ satisfying
    \begin{equation}\label{eq:deltaprop}
    \max_{\eps \in [\varepsilon_F - \delta, \varepsilon_F +
        \delta]}\left|
    \cN(\varepsilon) - \widetilde{\cN}_\delta(\varepsilon) \right| \le
    \dfrac{\cD(\varepsilon_F) }{2} \delta,
    \end{equation}
    the equation $\widetilde{\cN}_\delta(\varepsilon) = N$ has at least one
    solution $\widetilde{\eps_{F}}$ in the range
    $[\varepsilon_F - \delta, \varepsilon_F + \delta]$, and any such
    solution satisfies
    \begin{equation} \label{eq:controlEps}
    \left| \varepsilon_F - \widetilde{\varepsilon_F} \right| \le
    \dfrac{2}{\cD(\varepsilon_F)} \max_{\varepsilon \in [\varepsilon_F - \delta,
        \varepsilon_F + \delta]} \left| {\cN}(\varepsilon) - \tilde{\cN}_\delta(\varepsilon) \right|
    \quad \left(  \le \delta \right).
    \end{equation}
    Together with Lemma~\ref{lem:errorN}, we deduce that there exists $C \in \R^+$ such that
    \[
    \forall L \in \N^*, \quad \left| \varepsilon_F - \varepsilon_F^{L,q}\right| \le \dfrac{C}{L^{q+1}}.
    \]
\end{lemma}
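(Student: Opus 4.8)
The plan is to proceed in two stages: first establish the abstract stability statement (existence of $\widetilde{\eps_F}$ and the bound \eqref{eq:controlEps}) using only the smoothness of $\cN$ near $\eps_F$, and then combine it with Lemma~\ref{lem:errorN} to get the polynomial rate. For the first stage, I would invoke Lemma~\ref{lemma:DOS}: the integrated density of states $\cN$ is smooth on $(\eps_F - \delta_0, \eps_F + \delta_0)$ with $\cN'(\eps_F) = \cD(\eps_F) > 0$. By continuity of $\cD$, I would choose $\delta_1 \in (0, \delta_0]$ small enough so that $\cD(\eps) \geq \tfrac{1}{2}\cD(\eps_F)$ for all $\eps \in [\eps_F - \delta_1, \eps_F + \delta_1]$; hence on this interval $\cN$ is strictly increasing with derivative bounded below by $\tfrac12 \cD(\eps_F)$, so $\cN(\eps_F \pm \delta) \gtrless N \pm \tfrac12 \cD(\eps_F)\delta$ for $0 < \delta \leq \delta_1$ (using $\cN(\eps_F) = N$). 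The hypothesis \eqref{eq:deltaprop} then forces $\widetilde{\cN}_\delta(\eps_F - \delta) \leq N \leq \widetilde{\cN}_\delta(\eps_F + \delta)$, and the intermediate value theorem (applied to the continuous function $\widetilde{\cN}_\delta$) yields a solution $\widetilde{\eps_F}$ of $\widetilde{\cN}_\delta(\eps) = N$ in $[\eps_F - \delta, \eps_F + \delta]$.

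For the quantitative bound, let $\widetilde{\eps_F}$ be any such solution. Writing $\eta := \max_{\eps \in [\eps_F - \delta, \eps_F + \delta]} |\cN(\eps) - \widetilde{\cN}_\delta(\eps)|$, we have
\[
|\cN(\widetilde{\eps_F}) - N| = |\cN(\widetilde{\eps_F}) - \cN(\eps_F)| = |\cN(\widetilde{\eps_F}) - \widetilde{\cN}_\delta(\widetilde{\eps_F})| \leq \eta,
\]
since $\widetilde{\cN}_\delta(\widetilde{\eps_F}) = N = \cN(\eps_F)$. On the other hand, the lower bound $\cN' \geq \tfrac12 \cD(\eps_F)$ on $[\eps_F - \delta, \eps_F + \delta]$ and the mean value theorem give $|\cN(\widetilde{\eps_F}) - \cN(\eps_F)| \geq \tfrac12 \cD(\eps_F) |\widetilde{\eps_F} - \eps_F|$. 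Combining, $|\eps_F - \widetilde{\eps_F}| \leq \tfrac{2}{\cD(\eps_F)} \eta$, which is \eqref{eq:controlEps}; the parenthetical bound $\leq \delta$ follows from \eqref{eq:deltaprop}, which states precisely $\eta \leq \tfrac12 \cD(\eps_F)\delta$.

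Finally, to conclude the explicit $L^{-(q+1)}$ rate, I would apply this with $\widetilde{\cN}_\delta = \cN^{L,q}$ and $\delta$ equal to the $\delta$ of Lemma~\ref{lem:errorN} (shrunk if necessary to be $\leq \delta_1$). By Lemma~\ref{lem:errorN}, $\max_{\eps \in [\eps_F - \delta, \eps_F + \delta]} |\cN(\eps) - \cN^{L,q}(\eps)| \leq C L^{-(q+1)}$; for $L$ large enough this is $\leq \tfrac12 \cD(\eps_F)\delta$, so the hypothesis \eqref{eq:deltaprop} is met and \eqref{eq:controlEps} gives $|\eps_F - \eps_F^{L,q}| \leq \tfrac{2}{\cD(\eps_F)} C L^{-(q+1)}$. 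For the finitely many small $L$ not covered, the bound holds trivially after enlarging the constant, since $\eps_F^{L,q}$ is well-defined (being any element of the non-empty set $(\cN^{L,q})^{-1}(\{N\})$) and $|\eps_F - \eps_F^{L,q}|$ is then bounded by the diameter of that set or simply by a fixed constant. I do not anticipate a serious obstacle here: the only mild subtlety is matching the various smallness thresholds for $\delta$ (the $\delta_1$ from the $\cD$-comparison, the $\delta$ from Lemma~\ref{lem:errorN}, and the requirement that the whole interval stay inside $(\eps_F - \delta_0, \eps_F + \delta_0)$), and keeping track that $\eps_F^{L,q}$ is an arbitrary selection so all estimates must be uniform over the solution set.
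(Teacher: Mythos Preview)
Your proof is correct and follows essentially the same approach as the paper: choose $\delta_1$ so that $\cD \geq \tfrac12 \cD(\eps_F)$ on $[\eps_F-\delta_1,\eps_F+\delta_1]$, use this to show $\widetilde{\cN}_\delta(\eps_F-\delta)\le N \le \widetilde{\cN}_\delta(\eps_F+\delta)$ and apply the intermediate value theorem, then deduce the quantitative bound from the lower bound on $\cN'$. The only cosmetic difference is that for \eqref{eq:controlEps} the paper sandwiches $\cN(\eps_F)$ between $\cN(\widetilde{\eps_F}\pm \tfrac{2}{\cD(\eps_F)}\kappa)$ and invokes monotonicity, whereas you invoke the mean value theorem directly; your version is slightly cleaner since both endpoints already lie in $[\eps_F-\delta,\eps_F+\delta]$, avoiding the need to check that the shifted point $\widetilde{\eps_F}+\tfrac{2}{\cD(\eps_F)}\kappa$ stays inside the region where the increment inequality is valid.
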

\begin{remark}[Failure of hypotheses]
    Lemma~\ref{lem:error_epsF} fails when $\cD(\varepsilon_F) = 0$, i.e.
    when the density of states is zero at the Fermi level (i.e. in
    semimetals such as graphene). In that case, the bound depends on the local behavior
    of $\cD$ around~$\fermi$.
\end{remark}

\begin{proof}[Proof of Lemma~\ref{lem:error_epsF}]
    Thanks to Lemma \ref{lemma:DOS}, there exists $\delta_{1} > 0$ such that
    $\inf_{\eps \in [\fermi-\delta_{1},\fermi+\delta_{1}]} \cD(\eps) > \cD(\fermi)/2$. In
    particular, for all   $\eps \in (\fermi-\delta_{1},\fermi+\delta_{1})$  and $0 \le
    \delta < \delta_1$ such that $[\eps - \delta , \eps + \delta] \subset [ \varepsilon_F -
    \delta_1, \varepsilon_F + \delta_1]$, we have
    \begin{equation}
    \label{eq:cNeps+delta}
    \cN(\varepsilon + \delta) = \cN(\varepsilon) + \int_{\varepsilon}^{\varepsilon +
        \delta} \cD(\varepsilon) \rd \varepsilon \ge \cN(\varepsilon) +
    \dfrac{\cD(\varepsilon_F)}{2} \delta,
    \end{equation}
    and similarly, $\cN(\varepsilon - \delta) \le \cN(\varepsilon) - \dfrac{\cD(\varepsilon_F)}{2} \delta$. Let now $\widetilde{\cN}_\delta: \R \to \R$ be a continuous 
    function satisfying~(\ref{eq:deltaprop}). Then, it holds that
    \begin{equation*}
    \widetilde \cN_\delta(\eps_{F}-\delta) \leq N \leq \widetilde \cN_\delta(\eps_{F}+\delta).
    \end{equation*}
    Hence, by continuity of $\widetilde{\cN}_\delta$, the equation $\widetilde{\cN}_\delta(\varepsilon) = N$ has at least one solution in
    $[ \varepsilon_F - \delta, \varepsilon_F + \delta]$.  Let $\widetilde{\varepsilon_F}$ be such a solution. We denote by
    $\kappa_{\cN,\widetilde{\cN}_\delta} := \max_{\varepsilon \in [\varepsilon_F - \delta, \varepsilon_F + \delta]} \left| \cN(\varepsilon) -
    \widetilde{\cN}_\delta(\varepsilon) \right|$. Since $\kappa_{\cN,\widetilde{\cN}_\delta} \le \frac{\cD(\varepsilon_F)}{2} \delta$, we can again
    use~\eqref{eq:cNeps+delta} and get
    \begin{equation*}
    N = \cN(\fermi) = \widetilde{\cN}_\delta(\widetilde{\varepsilon_F})  \le \cN(\widetilde{\varepsilon_F}) + \kappa_{\cN,\widetilde{\cN}_\delta} \le
    \cN \left( \widetilde{\varepsilon_F} +
    \dfrac{2}{{\cD}(\fermi)} \kappa_{\cN,\widetilde{\cN}_\delta} \right),
    \end{equation*}
    and, similarly,
    $N = \cN(\varepsilon_F) \ge \cN \left( \widetilde{\varepsilon_F} - \dfrac{2}{{\cD}(\fermi)} \kappa_{\cN,\widetilde{\cN}_\delta} \right)$.  From the fact that $\cN$ is non-decreasing, and the inequality
    \[
    \cN \left( \widetilde{\varepsilon_F} - \dfrac{2}{{\cD}(\fermi)} \kappa_{\cN,\widetilde{\cN}_\delta} \right) \le \cN(\varepsilon_F) \le \cN \left( \widetilde{\varepsilon_F} + \dfrac{2}{{\cD}(\fermi)} \kappa_{\cN,\widetilde{\cN}_\delta} \right),
    \]
    we obtain~\eqref{eq:controlEps}.
\end{proof}

\subsection{Error on the ground state energy and density}

We now focus on the calculations of the ground state energy~\eqref{eq:IDoE} and density~\eqref{eq:totalDensity}. Let $\Pi^{L,p}$ and
$\Pi^{L,q}$ be interpolation operators of order $(p+1)$ and $(q+1)$ respectively. For the total energy, we introduce
\begin{equation*}
\varepsilon_{n \bk}^{L,p} := \Pi^{L,p} \left( \varepsilon_{n \bk} \right) \quad \text{and} \quad \varepsilon^{L,q}_{n \bk} := \Pi^{L,q} \left( \varepsilon_{n \bk} \right).
\end{equation*}
Using two different interpolation operators allows us to identify the error coming from the inexact approximation of $\varepsilon_{n\bk}$ everywhere in the Brillouin zone (\textit{bulk error}) and the one coming
from the inexact calculation of the Fermi energy (\textit{surface error}). We assume that the Fermi level is approximated by $\varepsilon_F^{L,q}$ as in the previous section,
using the same interpolation operator $\Pi^{L,q}$.  Altogether, we  compare the ground state energy $E = E(\varepsilon_F)$ defined in~\eqref{eq:IDoE} with the approximate ground state energy
\begin{equation} \label{eq:Lpq}
E^{L,p,q} :=  \sum_{n \in \N^\ast} \fint_{\BZ} \varepsilon_{n \bk}^{L,p} \1(\varepsilon_{n \bk}^{L,q} \le \varepsilon_F^{L,q}) \rd \bk,
\end{equation}
and the ground state electronic density $\rho=\rho_{\varepsilon_F}$, where $\rho_\eps$ is defined in~\eqref{eq:totalDensity}, with the approximate ground state density
\begin{equation*}
\rho^{L,p,q}(\br):= \sum_{n \in \N^\ast} \fint_{\BZ}  \Pi^{L,p} \left( |u_{n \bk}(\br)|^2\right) \1(\varepsilon_{n \bk}^{L,q} \le \varepsilon_F^{L,q}) \rd \bk.
\end{equation*}

\medskip

The main theorem of this section is the following.

\begin{theorem}
\label{thm:err_A} Assume $V \in H^s_\per$ for some $s \ge 0$.
Under Assumptions 1 and 2, there exists $C \in \R_{+}$ such that, for all $L \in \N$,
\begin{align*}
\left\| \rho - \rho^{L,p,q} \right\|_{H^{s+2}_\per} &\le C \left( \dfrac{1}{L^{p+1}}  +
\dfrac{1}{L^{q+1}}\right),\\
\left| E - E^{L,p,q} \right| &\le C \left( \dfrac{1}{L^{p+1}}  +  \dfrac{1}{L^{2q+2}} \right).
\end{align*}
\end{theorem}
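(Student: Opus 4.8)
The plan is to decompose each error into a \emph{bulk} part (coming from interpolating the eigenvalues and the densities $|u_{n\bk}|^2$ everywhere) and a \emph{surface} part (coming from the misplacement of the Fermi surface, i.e. from the fact that $\1(\eps_{n\bk}^{L,q}\le\eps_F^{L,q})\ne\1(\eps_{n\bk}\le\eps_F)$). Since the bands $\eps_{n\bk}$ with $n>\overline M$ stay bounded away from $\eps_F$ and those with $n<\underline M$ are fully occupied with no discrepancy region (using Lemma~\ref{lem:coverBZ}\,(\ref{lem:coverBZ_0}) and that $\delta_\BZ^{L,q}\to 0$), only finitely many indices $\underline M\le n\le\overline M$ contribute a surface term; I will treat the others by noting their contribution is either exactly zero or controlled by the bulk interpolation estimate~\eqref{eq:f-Pif}.

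For the density, write
\[
\rho-\rho^{L,p,q}=\sum_n\fint_\BZ\Big(|u_{n\bk}|^2-\Pi^{L,p}(|u_{n\bk}|^2)\Big)\1(\eps_{n\bk}\le\eps_F)\,\rd\bk
+\sum_n\fint_\BZ\Pi^{L,p}(|u_{n\bk}|^2)\Big(\1(\eps_{n\bk}\le\eps_F)-\1(\eps_{n\bk}^{L,q}\le\eps_F^{L,q})\Big)\rd\bk.
\]
The first sum is estimated in $H^{s+2}_\per$ using the real-analyticity of $\bk\mapsto\rho_{n\bk}$ from Lemma~\ref{lem:coverBZ}\,(\ref{lem:coverBZ_4}) — hence uniform bounds on all derivatives on a neighbourhood of the relevant compact sets — together with~\eqref{eq:f-Pif} with $p'=p$, giving $O(L^{-(p+1)})$. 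For the second sum, the integrand vanishes outside the discrepancy region, which (exactly as in the proof of Lemma~\ref{lem:errorN}, now also using $|\eps_F-\eps_F^{L,q}|\le CL^{-(q+1)}$ from Lemma~\ref{lem:error_epsF}) is contained in $\widetilde\cS_n(\eps_F,\delta_\cS^{L,q}+|\eps_F-\eps_F^{L,q}|)$, a set of Lebesgue measure $O(L^{-(q+1)})$ by Lemma~\ref{lem:vol_Sndelta}; since $\Pi^{L,p}(|u_{n\bk}|^2)$ is bounded in $H^{s+2}_\per$ uniformly in $\bk$ and $L$ (again by analyticity and strong convergence of $\Pi^{L,p}$), this yields $O(L^{-(q+1)})$ in $H^{s+2}_\per$. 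Adding the two gives the stated bound on $\|\rho-\rho^{L,p,q}\|_{H^{s+2}_\per}$.

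For the energy, the same split gives a bulk term $\sum_n\fint_\BZ(\eps_{n\bk}^{L,p}-\eps_{n\bk})\1(\eps_{n\bk}\le\eps_F)\rd\bk=O(L^{-(p+1)})$ by~\eqref{eq:f-Pif} with $p'=p$ and analyticity of $\bk\mapsto\eps_{n\bk}$ near $\widetilde\cS_n(\eps_F,\delta_0)$. The surface term is
\[
\mathrm{Surf}:=\sum_{n=\underline M}^{\overline M}\fint_\BZ\eps_{n\bk}^{L,p}\Big(\1(\eps_{n\bk}\le\eps_F)-\1(\eps_{n\bk}^{L,q}\le\eps_F^{L,q})\Big)\rd\bk,
\]
and here the crucial point — the main obstacle — is that the naive estimate only gives $O(L^{-(q+1)})$, whereas we want $O(L^{-(2q+2)})$. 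The gain comes from the constraint that defines $\eps_F^{L,q}$: $\cN^{L,q}(\eps_F^{L,q})=N=\cN(\eps_F)$, i.e. $\sum_n\fint_\BZ\big(\1(\eps_{n\bk}\le\eps_F)-\1(\eps_{n\bk}^{L,q}\le\eps_F^{L,q})\big)\rd\bk=0$. So I will write $\eps_{n\bk}^{L,p}=\eps_F+(\eps_{n\bk}^{L,p}-\eps_F)$ inside $\mathrm{Surf}$; the $\eps_F$ part cancels exactly by the charge constraint, leaving
\[
\mathrm{Surf}=\sum_{n=\underline M}^{\overline M}\fint_\BZ(\eps_{n\bk}^{L,p}-\eps_F)\Big(\1(\eps_{n\bk}\le\eps_F)-\1(\eps_{n\bk}^{L,q}\le\eps_F^{L,q})\Big)\rd\bk.
\]
On the discrepancy region one has $|\eps_{n\bk}-\eps_F|\le|\eps_{n\bk}-\eps_{n\bk}^{L,q}|+|\eps_F-\eps_F^{L,q}|=O(L^{-(q+1)})$, hence (since $\eps_{n\bk}^{L,p}-\eps_F=(\eps_{n\bk}^{L,p}-\eps_{n\bk})+(\eps_{n\bk}-\eps_F)$ and the first summand is $O(L^{-(p+1)})$, smaller when $p\ge q$, and of the same type otherwise) the integrand is $O(L^{-(q+1)})$ pointwise; combined with the measure bound $O(L^{-(q+1)})$ for the discrepancy region from Lemma~\ref{lem:vol_Sndelta}, this gives $\mathrm{Surf}=O(L^{-(2q+2)})$ when $p\ge q$. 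When $p<q$ the bulk term already dominates at $O(L^{-(p+1)})$ and one checks $p+1\le\min(p+1,2q+2)$, so the stated bound $|E-E^{L,p,q}|\le C(L^{-(p+1)}+L^{-(2q+2)})$ holds in all cases. The remaining care is bookkeeping: ensuring the finitely many extra bands ($n<\underline M$, $n>\overline M$) contribute nothing to $\mathrm{Surf}$ for $L$ large, and that all the "uniform derivative bounds from analyticity" are applied on fixed compact neighbourhoods independent of $L$.
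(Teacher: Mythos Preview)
Your proposal is correct and follows essentially the same approach as the paper: the identical bulk/surface decomposition, the same use of the charge constraint $\cN^{L,q}(\eps_F^{L,q})=N=\cN(\eps_F)$ to cancel the leading $\eps_F$-contribution in the energy surface term and gain the extra factor $L^{-(q+1)}$, and the same measure bound on the discrepancy region via Lemma~\ref{lem:vol_Sndelta}. One small point worth tightening (which the paper makes explicit): for the bulk terms the individual $\eps_{n\bk}$ and $|u_{n\bk}|^2$ need not be smooth on all of $\{\eps_{n\bk}\le\eps_F\}$ (crossings below $\eps_F$ are allowed), so one must rewrite the sum via the partition $\cB=\bigcup_n\cB_n(\eps_F)$ and invoke analyticity of the partial sums $\rho_{n\bk}$ and $\sum_{m\le n}\eps_{m\bk}$ on $\widetilde\cB_n(\eps_F,\delta_0)$---and the paper reduces to the scalar-valued estimate~\eqref{eq:f-Pif} by testing against $W\in H^{-(s+2)}_\per$, which you leave implicit.
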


\begin{proof}
We start with the density. Let $W \in H^{-(s+2)}_\per$ and introduce
\begin{equation*}
W_{n\bk} := \left\langle W, |u_{n\bk}|^2\right\rangle_{H^{-(s+2)}_\per, H^{s+2}_\per}  \quad \text{and} \quad
W_{n\bk}^{L, p} := \Pi^{L,p} \left( W_{n\bk} \right),
\end{equation*}
so that the error is $ \displaystyle \left\| \rho - \rho^{L,p,q} \right\|_{H^{s+2}_\per} = \sup_{W \in H^{-(s+2)}_\per, \| W \|_{H^{-(s+2)}_\per} = 1} e^{L, p, q}(W)$, where we set
\begin{equation*}
e^{L, p, q}(W) := \left\langle W, 
\rho - \rho^{L, p, q} \right\rangle_{H^{-(s+2)}_\per, H^{s+2}_\per} = \sum_{n \in \N^*} \fint_{\BZ} \left( W_{n \bk}  \1(\varepsilon_{n \bk} \le
\varepsilon_F) - W_{n \bk}^{L, p}   \1(\varepsilon_{n \bk}^{L,q} \le
\varepsilon_F^{L,q}) \right) \rd \bk.
\end{equation*}

We decompose the error into two contributions: the bulk error and the surface error. We write
$e^{L,p,q}(W) = e^{L,p}_{\rm bulk}(W) + e^{L,p,q}_{\rm surf}(W)$ with
\begin{equation} \label{eq:bulkError}
e_{\rm bulk}^L(W) := \sum_{n \le {\overline{M}}} \fint_{\BZ}
\left( W_{n \bk} - W_{n \bk}^{L,p} \right) \1 (\varepsilon_{n \bk} \le
\varepsilon_F) \rd \bk
\end{equation}
and
\begin{equation} \label{eq:surfaceError}
e^{L,p,q}_{\rm surf}(W) := \sum_{n \le {\overline{M}}} \fint_{\BZ} W_{n \bk}^{L,p} \left[
\1(\varepsilon_{n \bk} \le \varepsilon_F) - \1(\varepsilon_{n \bk}^{L,q} \le
\varepsilon_F^{L,q}) \right] \rd \bk .
\end{equation}
The bulk error \eqref{eq:bulkError} is spread over the whole Brillouin zone, while the surface error \eqref{eq:surfaceError} is localized
near the Fermi surface $\cS$. In order to control these two terms, we use Lemma~\ref{lem:coverBZ} which shows that
$\bk \mapsto \sum_{m=1}^n W_{m \bk} = \left\langle W ,\rho_{n \bk}\right\rangle_{H^{-(s+2)}_\per, H^{s+2}_\per}$ is smooth on $\bk \mapsto
\widetilde{\cB}_n(\varepsilon_F, \delta_0)$, while the map
$\bk \mapsto \varepsilon_{n \bk}$ is smooth on $\widetilde{\cS_n}(\varepsilon_F, \delta_0)$.

\paragraph{{\bf Bulk error}}  We have
\begin{align*}
e^{L,p}_{\rm bulk}(W) & =
\frac{1}{|\BZ|}\sum_{n \le {\overline{M}}}\int_{\BZ_{n}(\varepsilon_F)}\sum_{m=1}^{n}(W_{m \bk} - W_{m \bk}^{L,p})
\rd \bk  \\
& =
\frac 1 {|\BZ|}\sum_{n \le {\overline{M}}} \int_{\BZ_{n}(\varepsilon_F)}  \left( \left\langle W ,\rho_{n \bk}\right\rangle_{H^{-(s+2)}_\per, H^{s+2}_\per} - \Pi^{L,p} \left[ \left\langle W ,\rho_{n \bk}\right\rangle_{H^{-(s+2)}_\per, H^{s+2}_\per} \right] \right) \rd \bk.
\end{align*}
\REV{Let us introduce the maps 
$$
F_{n,W}:\left\{
\begin{array}{ccc}
\BZ & \to & \mathbb{R}\\
\bk &\mapsto &\left\langle W ,\rho_{n \bk}\right\rangle_{H^{-(s+2)}_\per, H^{s+2}_\per}\\
\end{array}
\right. .
$$
}
According to Lemma~\ref{lem:coverBZ}, the maps \REV{\old{$F_{n,W}:\bk \mapsto \left\langle W ,\rho_{n \bk}\right\rangle_{H^{-(s+2)}_\per, H^{s+2}_\per}$} $F_{n,W}$} are analytic on $\widetilde{\cB}_n(\varepsilon_F, \delta_0/2)$, and it holds that
\[
\forall n \le \overline{M}, \ \forall \bk \in \widetilde{\cB}_n(\varepsilon_F, \delta_0/2), \quad
\left|  F_{n,W}^{(p+1)} \right| \le \| W \|_{H^{-(s+2)}_\per} \sup_{\bk \in \widetilde{\cB}_n(\varepsilon_F, \delta_0/2)} \left\| \partial_{\bk}^{(p+1)} \rho_{n \bk} \right\|_{H^{s+2}_\per}.
\] 
Together with~\eqref{eq:f-Pif}, we deduce that there exists $C \in \R_+$ such that
\begin{equation*}
\left| e^{L,p}_{\rm bulk}(W) \right| \le \dfrac{C}{L^{p+1}}  \| W \|_{H^{-(s+2)}_\per}.
\end{equation*}

\paragraph{\bf Surface error.}
For the integrand in \eqref{eq:surfaceError} to be non-zero, it must hold that
\begin{equation*}
\varepsilon_{n\bk} \leq
\varepsilon_F \textrm{ and } \varepsilon_{n
\bk}^{L,q} > \varepsilon_F^{L,q} \quad \text{or} \quad \varepsilon_{n\bk} >
\varepsilon_F \textrm{ and } \varepsilon_{n
\bk}^{L,q} \leq \varepsilon_F^{L,q}.
\end{equation*}
In the former case for instance, we have
\begin{equation*}
0 \le \varepsilon_F - \varepsilon_{n \bk} = \left( \varepsilon_F - \varepsilon_F^{L,q} \right)
+ \left( \varepsilon_F^{L,q} - \varepsilon_{n \bk}^{L,q} \right) + \left( \varepsilon_{n \bk}^{L,q}
- \varepsilon_F \right).
\end{equation*}
The middle term being negative, we deduce that
$\left| \varepsilon_F - \varepsilon_{n \bk} \right| \le \left| \varepsilon_F - \varepsilon_F^L \right| + \left| \varepsilon_{n \bk}^L -
\varepsilon_{n \bk} \right|$. The other case is similar. In particular, as in the proof of Lemma~\ref{lem:error_epsF}, for $L$ large
enough, we can first restrict the integral in~\eqref{eq:surfaceError} to $\widetilde \cS_{n}(\varepsilon_F, \delta_0/2)$, then to some
$\widetilde \cS_n (\varepsilon_F, C L^{-(q+1)})$. Finally, since the maps
$\bk \mapsto W_{n\bk} = \left\langle W ,\rho_{n+1, \bk} - \rho_{n\bk}  \right\rangle_{H^{-(s+2)}_\per, H^{s+2}_\per}$ are smooth on
$\widetilde \cS_{n}(\varepsilon_F, \delta_0)$, we deduce that there exists $C_q \in \R_+$ such that
\begin{equation*}
\left| e^{L,p,q}_{\rm surf}(W) \right| \le \dfrac{C_q}{L^{(q+1)}} \| W \|_{H^{-(s+2)}_\per},
\end{equation*}
and the result follows.

\REV{\begin{remark}
  As we see from the proof, the surface error behaves as $L^{-q-1}$, while the bulk error behaves as $L^{-p-1}$. For the case of insulators (no Fermi surface), the surface error vanishes, and the bulk error can be exponentially small with good choice of interpolants. However, for metallic systems, we believe that our estimates are optimal, hence much larger due to Gibbs oscillations. This is illustrated in our numerical simulations in Section~\ref{sec:resnum}.
\end{remark}}

\paragraph{\bf Case of the energy.} We now focus on the energy. We follow the same lines as above, and decompose the error into a bulk error and
a surface error. The bulk error is bounded as above. We focus on the surface error, which reads
\begin{align*}
e^{L,p,q}_{\rm surf} & := \sum_{n \le {\overline{M}}} \fint_{\BZ} \eps_{n \bk}^{L,p}
\left[	\1(\varepsilon_{n \bk} \le
\varepsilon_F) - \1(\varepsilon_{n \bk}^{L,q}
\le \varepsilon_F^{L,q}) \right] \rd \bk\\
&= \sum_{n \le {\overline{M}}} \fint_{\BZ} (\eps_{n \bk}^{L,p} - \eps_{F})
\left[	\1(\varepsilon_{n \bk} \le
\varepsilon_F) - \1(\varepsilon_{n \bk}^{L,q}
\le \varepsilon_F^{L,q}) \right] \rd \bk +
\underbrace{\eps_{F} (\mathcal N(\eps_{F}) - \mathcal
N^{L,q}(\eps_{F}^{L,q}))}_{= 0}.
\end{align*}
Again, the integrand of $e^{L,p,q}_{\rm surf}$ is supported on sets of the form $\widetilde{\cS}_{n}(\varepsilon_F, C L^{-(q+1)})$. On
these sets, it holds that
\begin{equation*}
\left| \varepsilon_{n \bk}^{L,p} - \varepsilon_F \right| \le |\eps_{n \bk}^{L,p} -
\eps_{n \bk}| + |\eps_{n \bk} - \eps_{F}| = O(L^{-(p+1)} + L^{-(q+1)}).
\end{equation*}
We easily deduce that
\begin{align*}
\left| e^{L,p,q}_{\rm surf} \right| & \leq \frac 1 {|\BZ|}\sum_{n \le {\overline{M}}}
\int_{\widetilde \cS_{n}(\varepsilon_F, C
L^{-(q+1)})} C  \left(
L^{-(p+1)} + L^{-(q+1)} \right) \rd\bk \leq C \left(\frac 1 {L^{p+q+2}} + \frac 1 {L^{2q+2}}\right),
\end{align*}
and the proof follows.
\end{proof}

\begin{remark}[Order gain on the energy] \label{rem:numericalIntegration} The interest of choosing two different interpolation operators $\Pi^{L,p}$ and $\Pi^{L,q}$ for the calculation of $E^{L,p,q}$ is
now clear: the integration zone $\1(\varepsilon_{n \bk}^{L,q} \le \varepsilon_F^{L,q})$ can be approximated with a
lower order term (i.e. $q = \lceil \frac{p-1}2 \rceil$) with no loss of order. For instance, when using a cubic method ($p = 3$), it is enough to evaluate the integral of
cubic functions on tetrahedra ($q = 1$), and not on
complicated intersections of cubic surfaces ($q=3$). 

This is surprising at first: since the computation of the approximate
energy $E^{L,p,q}$ involves the approximate Fermi level
$\varepsilon_{F}^{L,q}$, how can the energy be more accurate than the
Fermi level? The answer, as shown above, is that, to leading order,
the variations of the energy caused by an error in the determination
of the Fermi surface is proportional to $\varepsilon_{F}$ times the
error on the number of the particles. Since this number is kept fixed
to $N$ for all $L$, this leading order error vanishes. This means
that, even if the exact Fermi level is known, it is still numerically
advantageous to keep it determined implicity through the equation
$\cN^{L,q}(\varepsilon_{F}^{L,q}) = N$.
\end{remark}



\section{Smearing methods}
\label{sec:smearing-methods}

We now focus on smearing methods. Let $A$ denote either the integrated
density of states $\cN$, the ground state density $\rho$ or the ground
state energy $E$. We want to approximate $A$ by $A^L$, where $A^L$ is
obtained by replacing the integral
in~\eqref{eq:IDoS}-\eqref{eq:IDoE}-\eqref{eq:totalDensity} by a
corresponding Riemann sum on a regular grid with $L^d$ points.
However, since the step function $f(x) := \1(x \le 0)$ appearing in
the integrand is discontinuous, we expect the convergence to be slow.
The idea of smearing methods is to replace this step function by a
\textit{smeared} function $f^{\sigma}$ that is smooth: we define
\begin{align}
  \label{eq:def_Asigma}
  A^{\sigma}(\varepsilon) = \fint_{\BZ} \sum_{n \in \N^{*}} A_{n\bk} f^{\sigma}(\varepsilon_{n \bk} - \varepsilon) \rd \bk,
\end{align}
where $f^{\sigma}$ is a smooth approximation to $f$, as we will
discuss below. This approximate quantity $A^\sigma$ can then be efficiently
computed by a Riemann sum. We introduce, for $L \in \N^*$, the uniform
grid
\begin{equation*}
\BZ_L := \cB \cap L^{-1} \RLat,
\end{equation*}
where we see here $\cB$ as a torus, so that there are $L^d$
points in $\BZ_L$. We then define
\begin{align}
  A^{\sigma,L}(\varepsilon) & := \dfrac{1}{L^d} \sum_{\bk \in \BZ_L} \sum_{n \in \N^*} A_{n \bk} f^{\sigma}  (\varepsilon_{n\bk} - \varepsilon).
\end{align}
We define $\varepsilon_{F}^{\sigma}$ and $\varepsilon_{F}^{\sigma,L}$ to be the (a priori non-unique)
solutions of the equations
\begin{align}
  \label{eq:def_eps_F_sigma_L}
  \cN^{\sigma}(\varepsilon_{F}^{\sigma}) = N, \quad \cN^{\sigma,L}(\varepsilon_{F}^{\sigma,L}) = N,
\end{align}
and we finally set
\begin{align}
  A^{\sigma} := A^{\sigma}(\varepsilon_{F}^{\sigma}), \quad A^{\sigma,L} := A^{\sigma,L}(\varepsilon_{F}^{\sigma,L}).
\end{align}
The quantities $A^{\sigma,L}$ are the ones that we can compute numerically.
Our goal is to compute the error between $A^{\sigma,L}$ and $A$. We
first estimate the error between ${A}^\sigma$ and $ A$ in
Section~\ref{sec:error-between-exact}. Then, we provide in
Section~\ref{sec:error-betw-smear} error estimates for the
discretization error ${A}^{\sigma,L} - {A}^{\sigma}$. The combination
of the two provides the total error estimates for smearing methods.

\subsection{Smearing functions}
\label{sec:smearing-functions}

In this section, we explain how smearing functions are constructed. 

\REV{
\begin{definition}[Smearing mollifier]
\label{def:smearing}
 We say a function $\delta^1: \R \to \R$ is a smearing mollifier if it satisfies the two following properties:
\begin{enumerate}
	\item[(P1)] $\delta^1 \in \cS(\R)$, where $\cS(\R)$ denotes the Schwartz space of fast decaying functions;
	\item[(P2)] $\int_{\R} \delta^1 = 1$.
\end{enumerate}
We say that a smearing mollifier is of order at least $p \in \N$ if
\begin{equation*}
\int_{\R} P(x) \delta^1(x) \rd x = P(0) \quad \textrm{for all polynomials $P$ with $\deg(P) \le p$}.
\end{equation*}
that is $M_{0}(\delta^{1})=1$ and $M_n(\delta^1)=0$ for $1 \le n \le p$,
where the $M_n(\phi)$ is the $n$-th momentum of the function $\phi \in \cS(\R)$:
$$
\forall n \in \N, \quad M_n(\phi) = \int_\R x^n \phi(x) \, \rd x.
$$
The \textit{order} of a smearing method is the \REV{\old{smallest}largest} $p$ such that
the above property holds.
We say that $f^1: \R \to \R$  is a smearing function if there exists a smearing mollifier $\delta^1$ such that 
$$
f^1(x) = \int_{-\infty}^x \delta^1(y)\,dy.
$$
\end{definition}
}


\REV{For any smearing mollifier $\delta^1$, }we set $\delta^\sigma (x) := \sigma^{-1} \delta^1(\sigma^{-1} x)$
and
\begin{equation} \label{eq:def:fsigma}
f^\sigma (x) = 1 - \int_{-\infty}^{x} \delta^\sigma(y) \rd y, \quad \text{so that} \quad f^\sigma(x) = f^1(\sigma^{-1}x).
\end{equation}
Note that $\delta^\sigma = -(f^\sigma)'$, and that we have in a distributional sense $\delta^{\sigma} \to \delta$ and $f^{\sigma} \to f$ as $\sigma \to 0$.

\medskip


The true step function $f$ is non-increasing (which implies that the set of possible Fermi
levels $\mathcal N^{-1}(\{N\})$ is an interval), and has values equal to either~0 or~1. In particular, $f_{n\bk} = f(\varepsilon_{n\bk} - \fermi)$ is interpreted as the occupation number of the Bloch modes with energy $\varepsilon_{n\bk}$. By
contrast, this interpretation is not valid for a smearing method of order $p \ge 2$, since smearing functions of order $p \ge 2$ necessarily have values outside the
range $[0,1]$ (otherwise, $\int(f^{1}-f)x$ would be positive).

%
\medskip

Let us mention some possible choices encountered in the literature and used in practice:
\begin{itemize}
	\item the Fermi-Dirac smearing~\cite{Dirac1926, Fermi1926, Mermin1965}:
	\begin{equation}
	\label{smearing_function_1}
	f_{\rm FD}^1(x):= \frac{1}{1 + \re^{x}}, \quad  \delta_{\rm FD}^1(x) := \dfrac{1}{2 + \re^{x} + \re^{-x}}.
	\end{equation}
	This method is of order $1$ and $f^1_{\rm FD}$ is
	decreasing from $1$ to $0$;

	\item the Gaussian smearing~\cite{DeVita1991}:
	\begin{equation}
	\tag{\ref*{smearing_function_1}$'$}
	f^1_{\rm G}(x):= \frac{1}{2}\left( 1 - {\rm erf}(x)\right), \quad   \delta_{\rm G}^1(x) := \dfrac{1}{\sqrt{\pi}} \re^{-x^2}.
	\end{equation}
	This method is of order $1$ and $f^1_{\rm G}$ is decreasing from $1$ to $0$;

	\item the Methfessel-Paxton smearing~\cite{methfessel1989high}: this method is defined by the sequence of
	functions $(f_{\rm MP,N}^1)_{N \in \N}$ given by
	\begin{equation}
	\tag{\ref*{smearing_function_1}$''$}
	f_{{\rm MP},N}^1(x):=  f^1_{\rm G}(x) + \sum_{n=1}^N A_n H_{2n-1}(x)
	\re^{-x^2},
	\quad
	\delta_{{\rm MP},N}^1(x) := \sum_{n=0}^N A_n H_{2n}(x) \re^{-x^2}.
	\end{equation}
	Here, the functions $(H_n)_{n \in \N}$ are the Hermite
        polynomials (defined as $H_{0}(x)=1$,
        $H_{n+1}(x) = 2x H_{n}(x)-H_{n}'(x)$), and the coefficients
        $A_n := \dfrac{(-1)^n}{n! 4^n \sqrt{\pi}}$ are chosen such
        that the method is of order $2N+1$. For $N \ge 1$,
        $f_{{\rm MP}, N}^1$ is not monotone, and has negative
        occupation numbers;
	\item the Marzari-Vanderbilt cold smearing~\cite{Marzari1999}:
	\begin{equation}
	\tag{\ref*{smearing_function_1}$'''$}
	f_{\rm cs}^1(x) := f^1_{\rm G}(x) + \dfrac{1}{4 \sqrt{\pi}} ( - a H_2(x) +
	H_1(x) ) \re^{-x^2},
	\end{equation}
	corresponding to
	\begin{equation*}
	\delta^1_{\rm cs}(x) = \dfrac{1}{\sqrt{\pi}} \left( ax^3 - x^2 - \frac32 a x + \frac32 \right) \re^{-x^2},
	\end{equation*}
	where $a$ is a free parameter, usually chosen so that
        $f_{\rm cs}^{1}$ is always non-negative (avoiding negative
        occupation numbers). This method, like the $N=1$ case of the
        Methfessel-Paxton scheme above, is of order $3$.
\end{itemize}

\begin{figure}[h!]
\centering
\includegraphics[width=0.49\textwidth]{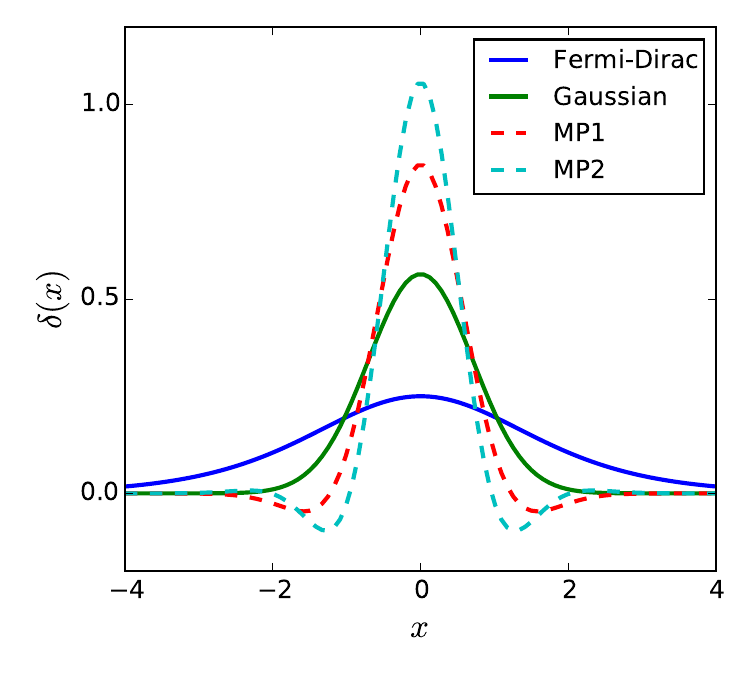}  \includegraphics[width=0.49\textwidth]{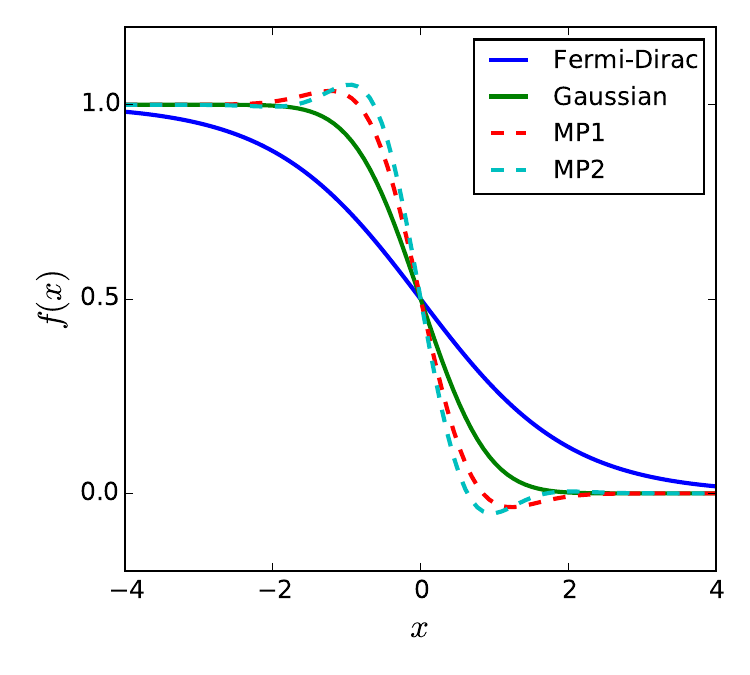}
\caption{Some smearing functions. Approximation to the Dirac
function $f^{1}$ (left), and occupation numbers $\delta^{1}$ (right).}
\label{fig:smearing_functions}
\end{figure}

\begin{remark}[Temperature]
The Fermi-Dirac distribution is used to model electronic systems at
a finite temperature. In this case $\sigma = k_{B} T$, where $k_{B}$
is the Boltzmann constant. The other smearing functions do not have
such a physical interpretation and are only chosen for their mathematical properties.
  \end{remark}

  The Fermi-Dirac function is \REV{\old{holomorphic}meromorphic}, but has poles at
  the imaginary energies $(2 \Z + 1)\ri \pi$ (called \textit{Matsubara
  frequencies} in the context of field theory). By contrast, the other
  smearing functions introduced above (called \textit{Gaussian-type} in
  the following) are entire. This will have an impact on our estimates.

\subsection{Error between exact and smeared quantities}
\label{sec:error-between-exact}

In the remaining of this section, we fix a smearing \REV{\old{method}mollifier $\delta^1$} of
order $p$, and we are interested in the behavior of $A^{\sigma} - A$
as $\sigma$ goes to $0$. 

\medskip

Consider a quantity of interest of the form
\begin{equation} \label{eq:def:Aeps}
    A(\varepsilon)= \sum_{n \in \N^{*}}\fint_{\BZ} A_{n\bk} \1(\varepsilon_{n\bk} \le \varepsilon) \, \rd \bk.
\end{equation}
Then, formally (we will justify this computation case by case for
various $A$ later)
\begin{align}
  \sum_{n \in \N^{*}}\fint_{\BZ} A_{n\bk} f^1\left(\frac{\varepsilon_{n\bk}-\varepsilon}\sigma\right) \, \rd \bk =& \sum_{n \in \N^{*}}\fint_{\BZ} A_{n\bk} \left( \int_{\frac{\varepsilon_{n\bk}-\varepsilon}\sigma}^{\infty}\delta^1(x) \, \rd x \right) \, \rd\bk \nonumber  \\ 
  =& \frac 1 \sigma \sum_{n \in \N^{*}}\fint_{\BZ} A_{n\bk} \left( \int_{\varepsilon_{n\bk}}^{\infty}\delta^1\left(\frac{\varepsilon'-\varepsilon}\sigma\right) \, \rd\varepsilon' \right) \,  \rd\bk \nonumber  \\
  =& \frac 1 \sigma \int_{\R} \left( \sum_{n \in \N^{*}}\fint_{\BZ} A_{n\bk} \1(\varepsilon_{n\bk} \leq \varepsilon) \, \rd\bk \, \right) \delta^1\left(\frac{\varepsilon'-\varepsilon}\sigma\right) \, \rd\varepsilon' \nonumber  \\ 
  =& \frac 1 \sigma \int_{\R}  \, A(\varepsilon) \delta^1\left(\frac{\varepsilon'-\varepsilon}\sigma\right)  \, \rd\varepsilon' =  (A \ast \delta^{\sigma})(\varepsilon). \label{eq:Amusigma}
\end{align}
In other words, the effect of smearing is to smooth the
function $A(\varepsilon)$ by a convolution with $\delta^{\sigma}$. In order to understand the properties of the
smearing method, we therefore have to study the asymptotic behavior of
integrals of the form~\eqref{eq:Amusigma} for $\sigma \to 0$.

\medskip

To make this precise, we introduce the mollification operator. \REV{Let us denote by $\cS'(\R)$ the set of tempered distributions on $\R$}. For $g
\in \cS'(\R)$ and $\phi \in \cS(\R)$, we define (in the sequel, $g \sim A$, and $\phi \sim \delta^1$ is a mollifier)
\begin{equation} \label{eq:def:cMAdelta}
   \forall \varepsilon \in \R, \ \forall \sigma \in \R^*, \quad
    \cM_{g,\phi}(\varepsilon,\sigma) := \left\langle g,\frac 1 \sigma \phi\left(\frac{\cdot - \varepsilon}{\sigma}\right) \right\rangle_{\cS',\cS}
   \quad \text{and} \quad
   \cM_{g,\phi}(\varepsilon, 0) := g(\varepsilon)M_0(\phi).
\end{equation}
Note that we extended $\cM$ to $\sigma$ negative. Due to the change of variables in~\eqref{eq:Amusigma}, this does not correspond to taking a negative smearing parameter in the original definition~\eqref{eq:def_Asigma} of $A^{\sigma}$. The main idea of this section is that if $g$ is smooth, then we can write that
\[
    \cM_{g,\phi}(\varepsilon,\sigma)  = \frac 1 \sigma \int_\R g(\varepsilon) \, \phi\left(\frac{\varepsilon'-\varepsilon}\sigma \right) \,  \rd \varepsilon' 
    = \int_\R g(\varepsilon + \sigma x) \, \phi(x) \,  \rd x.
\]
In particular, if $\phi$ is a smearing \REV{\old{method}mollifier} of order $p$, then by Taylor-expanding $g$
around $\varepsilon$, this quantity is also $g(\varepsilon) + O(\sigma^{p+1})$. We make this statement rigorous in the next Lemma, whose proof is postponed until the end of the section. 

\begin{lemma}\label{lem:propg} Let $g \in \cS'(\R)$ be such that $g|_{(\varepsilon_F-\delta_0,\varepsilon_F+\delta_0)}$ is a function of class $C^k$, and let $\phi \in \cS(\R)$. Then, the function $\cM_{g,\phi}(\varepsilon,\sigma)$ is of class $C^k$
  on $(\varepsilon_F-\delta_0,\varepsilon_F+\delta_0) \times \R$, and
  we have, for all
  $(m,n) \in \N \times \N \mbox{ such that } m+n \le k,$
$$
\frac{\partial^{m+n}\cM_{g,\phi}}{\partial \varepsilon^m \partial
  \sigma^n}(\varepsilon,\sigma)
= \left\{  \begin{array}{ll} \dps \dfrac{1}{\sigma^{n+1}}  \left\langle g,\phi_{m,n}\left(\frac{\cdot-\varepsilon}{\sigma}\right) \right\rangle_{\cS',\cS} & \mbox{ if } \sigma \neq 0, \\
           \dps g^{(m+n)}(\varepsilon) M_n(\phi) & \mbox{ if } \sigma =
                                           0, \end{array} \right.
$$
where $\phi_{m,n} \in \cS(\R)$ is defined by
$$
\forall t \in \R, \quad \phi_{m,n}(t) := (-1)^{m+n}
\frac{\rd^{m+n}}{\rd t^{m+n}} \left( t^n \phi(t) \right).
$$
\end{lemma}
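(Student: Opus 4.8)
The plan is to treat the two cases $\sigma\neq 0$ and $\sigma=0$ separately and then verify that the formulas glue into a $C^k$ function across $\sigma=0$. For $\sigma\neq 0$, I would argue by induction on $m+n$. The base case $m=n=0$ is the definition of $\cM_{g,\phi}$. For the inductive step, I differentiate the bracket $\frac{1}{\sigma^{n+1}}\langle g,\phi_{m,n}(\tfrac{\cdot-\varepsilon}{\sigma})\rangle_{\cS',\cS}$ once more with respect to $\varepsilon$ or $\sigma$; since $\phi_{m,n}\in\cS(\R)$ (each $\phi_{m,n}$ is a finite linear combination of $t\mapsto t^j\phi^{(i)}(t)$, hence Schwartz), the pairing is smooth in the parameters, and differentiation under the duality pairing is justified because $\sigma\mapsto\frac1\sigma\phi_{m,n}(\tfrac{\cdot-\varepsilon}{\sigma})$ is a smooth $\cS(\R)$-valued map on $\R^*$. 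Differentiating in $\varepsilon$ pulls out a factor $-\tfrac1\sigma$ and replaces $\phi_{m,n}$ by $\phi_{m,n}'$; one checks by a direct computation with the defining formula $\phi_{m,n}(t)=(-1)^{m+n}\frac{\rd^{m+n}}{\rd t^{m+n}}(t^n\phi(t))$ that $-\phi_{m,n}'=\phi_{m+1,n}$. Differentiating in $\sigma$ produces two terms, $-\tfrac{n+1}{\sigma}$ times the old expression plus a term coming from the inner $\sigma$-dependence $\tfrac{\cdot-\varepsilon}{\sigma}$; after rewriting $\tfrac{\cdot-\varepsilon}{\sigma}$ as the argument $t$ and using the identity $-\bigl((n+1)\phi_{m,n}(t)+t\phi_{m,n}'(t)\bigr)=\phi_{m,n+1}(t)$ (again checked from the defining formula), one arrives at $\frac{1}{\sigma^{n+2}}\langle g,\phi_{m,n+1}(\tfrac{\cdot-\varepsilon}{\sigma})\rangle$, as claimed.

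For the behavior near $\sigma=0$, I would use the hypothesis that $g$ restricted to the interval $I:=(\varepsilon_F-\delta_0,\varepsilon_F+\delta_0)$ is $C^k$. The point is that $\cM_{g,\phi}(\varepsilon,\sigma)$ for $\varepsilon\in I$ and small $\sigma$ depends only on $g|_I$ up to an error that is $O(\sigma^\infty)$: splitting $g=\chi g+(1-\chi)g$ with $\chi$ a smooth cutoff equal to $1$ near $\varepsilon$ and supported in $I$, the contribution of $(1-\chi)g$ is $\frac1\sigma\langle(1-\chi)g,\phi_{m,n}(\tfrac{\cdot-\varepsilon}{\sigma})\rangle$, which decays faster than any power of $\sigma$ because $\phi_{m,n}\in\cS(\R)$ and the supports are separated. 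Hence it suffices to treat $g\in C^k_c(\R)$ (or even $g\in\cS(\R)$ after a harmless mollification argument — but a compactly supported $C^k$ representative suffices), for which the change of variables $x=(\varepsilon'-\varepsilon)/\sigma$ gives $\cM_{g,\phi}(\varepsilon,\sigma)=\int_\R g(\varepsilon+\sigma x)\phi(x)\,\rd x$ for $\sigma\neq 0$, an expression that extends to $\sigma=0$ with value $g(\varepsilon)M_0(\phi)$. Differentiating this integral representation in $\varepsilon$ and $\sigma$ (legitimate by dominated convergence, using $g\in C^k$ and $\phi\in\cS(\R)$) yields, for general $\sigma$,
\begin{equation*}
\frac{\partial^{m+n}\cM_{g,\phi}}{\partial\varepsilon^m\partial\sigma^n}(\varepsilon,\sigma)=\int_\R g^{(m+n)}(\varepsilon+\sigma x)\,x^n\phi(x)\,\rd x,
\end{equation*}
which at $\sigma=0$ equals $g^{(m+n)}(\varepsilon)M_n(\phi)$, and for $\sigma\neq 0$ coincides with the first formula after undoing the substitution and using the definition of $\phi_{m,n}$ via integration by parts $m+n$ times. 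This simultaneously shows $C^k$ regularity on $I\times\R$ (the integrand $g^{(m+n)}(\varepsilon+\sigma x)x^n\phi(x)$ is jointly continuous and dominated uniformly on compact subsets of $I\times\R$ by an integrable function) and identifies the value at $\sigma=0$.

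The main obstacle — really the only delicate point — is justifying the interchange of the distributional pairing $\langle g,\cdot\rangle_{\cS',\cS}$ with differentiation in $(\varepsilon,\sigma)$ across the locus $\sigma=0$, where the test function $\sigma^{-1}\phi(\tfrac{\cdot-\varepsilon}{\sigma})$ degenerates (it converges to $\delta_\varepsilon$ in $\cS'$ but not in $\cS$). The cutoff reduction above is what circumvents this: it localizes to a region where $g$ is a genuine $C^k$ function, reducing everything to differentiation under a classical Lebesgue integral, and it shows the discarded tail is smooth with all derivatives $O(\sigma^N)$ for every $N$, hence contributes nothing to the derivatives at $\sigma=0$. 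Once this reduction is in place, the remaining verifications — that the two displayed formulas agree for $\sigma\neq 0$, and the combinatorial identities $-\phi_{m,n}'=\phi_{m+1,n}$ and $-\bigl((n+1)\phi_{m,n}+t\phi_{m,n}'\bigr)=\phi_{m,n+1}$ — are routine manipulations with the explicit definition of $\phi_{m,n}$ and repeated integration by parts, with no boundary terms since $\phi$ and all the $\phi_{m,n}$ are Schwartz.
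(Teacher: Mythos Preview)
Your approach is correct and largely parallel to the paper's: both use the same cutoff decomposition $g=\chi g+(1-\chi)g$ to localize near $\varepsilon$, both handle the $(1-\chi)g$ tail by showing that $(1-\chi)\,\sigma^{-1}\phi(\tfrac{\cdot-\varepsilon}{\sigma})$ tends to zero in $\cS(\R)$ (with all derivatives $O(\sigma^N)$), and both establish the $\sigma\neq 0$ formulas by direct differentiation of the duality pairing. The one genuine difference is your treatment of the local part $\chi g$ at $\sigma=0$. The paper stays in the distributional picture: it introduces the group of scaling operators $A_{\varepsilon,\sigma}$ on $\cS(\R)$, computes their derivatives, and then argues that $A_{\varepsilon,\sigma}\phi\to M_0(\phi)\delta_\varepsilon$ in the dual space $(C^0(K))'$, that $\sigma^{-1}(A_{\varepsilon,\sigma}\phi-M_0(\phi)\delta_\varepsilon)\to M_1(\phi)\delta_\varepsilon'$ in $(C^1(K))'$, and so on inductively. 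Your route---making the change of variables to the classical integral $\int_\R g(\varepsilon+\sigma x)\phi(x)\,\rd x$ and differentiating under the integral sign---is more elementary and arguably cleaner, since it produces the formula $\int g^{(m+n)}(\varepsilon+\sigma x)\,x^n\phi(x)\,\rd x$ for all $\sigma$ (including $\sigma=0$) in one stroke, and the joint continuity in $(\varepsilon,\sigma)$ is immediate from dominated convergence. The paper's operator formalism, by contrast, yields a slightly more structural statement (the derivatives are expressed via the action of explicit operators $\phi\mapsto\phi'$ and $\phi\mapsto L\phi=(t\phi)'$ on $\cS(\R)$), which is useful if one wants to iterate without unpacking the integrals each time.
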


\subsubsection{Error on the integrated density of states and on the Fermi level} 

By choosing $A_{n \bk} = 1$ in \eqref{eq:def:Aeps} and using the decay at
infinity of $\delta^{1}$ to justify the exchange of integrals in~\eqref{eq:Amusigma}, we have
$\cN^{\sigma}(\varepsilon) = \widetilde{\cN}(\varepsilon, \sigma)$, where we set for clarity $\widetilde{\cN} := \cM_{\cN,\delta^1}$. 
\begin{lemma} \label{lem:Nsigma}
    For any smearing \REV{\old{method}mollifier $\delta^1$} of order $p \ge 1$, the function $\widetilde{\cN}$ is smooth on $(\varepsilon_F - \delta_0, \varepsilon_F + \delta_0) \times \R$, and satisfies 
    \begin{equation*}
    \frac{\partial \widetilde\cN}{\partial\varepsilon}(\varepsilon,0) = \cD(\varepsilon) >0, \quad  
    \forall 1 \le n \le p, \ \frac{\partial^n \widetilde\cN}{\partial^n \sigma}(\varepsilon,0) =   0,
    \quad
    \frac{\partial^{p+1} \widetilde\cN}{\partial^{p+1} \sigma}(\varepsilon,0) = \cD^{(p)}(\varepsilon) M_{p+1}(\delta^1).
    \end{equation*}
    In particular, there exists $C \in \R^+$ such that
    \begin{equation} \label{eq:N-Nsigma}
    \max_{\varepsilon \in (\varepsilon_F - \delta_0, \varepsilon_F + \delta_0)} \left| \cN(\varepsilon) - \cN^\sigma(\varepsilon) \right| \le C \sigma^{p+1}.
    \end{equation}
\end{lemma}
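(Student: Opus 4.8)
The plan is to obtain the statement as a direct consequence of Lemma~\ref{lem:propg}, once we know that $\cN$ is smooth near $\varepsilon_F$ and defines a tempered distribution. First I would record that $\cN \in \cS'(\R)$: by~\eqref{eq:comparison_eigenvalues} only finitely many bands contribute to~\eqref{eq:IDoS} for each $\varepsilon$, so $\cN(\varepsilon)\le C(1+|\varepsilon|)^{d/2}$ has at most polynomial growth. Moreover, Lemma~\ref{lemma:DOS} tells us that the restriction of $\cN$ to $(\varepsilon_F-\delta_0,\varepsilon_F+\delta_0)$ is of class $\cC^\infty$, with $\cN'=\cD>0$ and $\cN^{(j+1)}=\cD^{(j)}$ on this interval. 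Applying Lemma~\ref{lem:propg} with $g=\cN$, $\phi=\delta^1\in\cS(\R)$ and $k$ arbitrary thus shows that $\widetilde\cN=\cM_{\cN,\delta^1}$ is $\cC^\infty$ on $(\varepsilon_F-\delta_0,\varepsilon_F+\delta_0)\times\R$; recall that $\cN^\sigma(\varepsilon)=\widetilde\cN(\varepsilon,\sigma)$ (the case $A_{n\bk}=1$ of~\eqref{eq:Amusigma}).

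Next I would read off the derivatives at $\sigma=0$ from the second branch of the formula in Lemma~\ref{lem:propg}, namely $\partial_\varepsilon^m\partial_\sigma^n\widetilde\cN(\varepsilon,0)=\cN^{(m+n)}(\varepsilon)\,M_n(\delta^1)$. For $(m,n)=(1,0)$, using $M_0(\delta^1)=1$, this gives $\partial_\varepsilon\widetilde\cN(\varepsilon,0)=\cN'(\varepsilon)=\cD(\varepsilon)>0$. For $(m,n)=(0,n)$ with $1\le n\le p$, the assumption that the smearing method has order $p$ means $M_n(\delta^1)=0$, hence $\partial_\sigma^n\widetilde\cN(\varepsilon,0)=0$. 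For $(m,n)=(0,p+1)$ we get $\partial_\sigma^{p+1}\widetilde\cN(\varepsilon,0)=\cN^{(p+1)}(\varepsilon)M_{p+1}(\delta^1)=\cD^{(p)}(\varepsilon)M_{p+1}(\delta^1)$.

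For the error bound, fix $\delta\in(0,\delta_0)$ and Taylor-expand $\sigma\mapsto\widetilde\cN(\varepsilon,\sigma)$ at $\sigma=0$ to order $p$: the value at $\sigma=0$ is $\cN(\varepsilon)$ and the $\sigma$-derivatives of order $1$ through $p$ vanish by the previous paragraph, so for some $\theta=\theta(\varepsilon,\sigma)\in(0,1)$,
\[
\cN^\sigma(\varepsilon)-\cN(\varepsilon)=\widetilde\cN(\varepsilon,\sigma)-\widetilde\cN(\varepsilon,0)=\frac{\sigma^{p+1}}{(p+1)!}\,\partial_\sigma^{p+1}\widetilde\cN(\varepsilon,\theta\sigma).
\]
Since $\partial_\sigma^{p+1}\widetilde\cN$ is continuous, hence bounded, on the compact set $[\varepsilon_F-\delta,\varepsilon_F+\delta]\times[-1,1]$, this gives $|\cN(\varepsilon)-\cN^\sigma(\varepsilon)|\le C\sigma^{p+1}$ uniformly for $\varepsilon\in[\varepsilon_F-\delta,\varepsilon_F+\delta]$ and $\sigma\in(0,1]$.

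I do not expect any genuine obstacle: the lemma is essentially a corollary of Lemma~\ref{lem:propg}. The only mildly delicate points are checking $\cN\in\cS'(\R)$ so that Lemma~\ref{lem:propg} is applicable, and observing that $\partial_\sigma^{p+1}\widetilde\cN$ — and hence the constant $C$ in~\eqref{eq:N-Nsigma} — is controlled only on compact subintervals of $(\varepsilon_F-\delta_0,\varepsilon_F+\delta_0)$, so the maximum in~\eqref{eq:N-Nsigma} should be understood over such a subinterval (equivalently, after shrinking $\delta_0$).
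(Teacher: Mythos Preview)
Your proposal is correct and follows essentially the same approach as the paper: apply Lemma~\ref{lem:propg} with $g=\cN$ and $\phi=\delta^1$, then read off the derivatives at $\sigma=0$ using the moment conditions $M_0(\delta^1)=1$, $M_n(\delta^1)=0$ for $1\le n\le p$. You are in fact more thorough than the paper, which omits the verification that $\cN\in\cS'(\R)$, the explicit Taylor remainder argument for~\eqref{eq:N-Nsigma}, and the compactness caveat you raise about the interval.
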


\begin{proof} Applying Lemma~\ref{lem:propg} with $g=\cN$, which is smooth on $(\varepsilon_F-\delta_0,\varepsilon_F+\delta_0)$, and $\phi=\delta^1$, we obtain that $\widetilde\cN$ is smooth on $(\varepsilon_F-\delta_0,\varepsilon_F+\delta_0) \times\R$. The proof is then a consequence of the fact that $M_0(\delta^1)=1$ and $M_n(\delta^1)=0$ for $1 \le n \le p$, together with the fact that $\cN'(\varepsilon) = \cD(\varepsilon)$ by definition (see~\eqref{eq:def:Deps}).
\end{proof}

From~\eqref{eq:def_eps_F_sigma_L}, $\varepsilon_F^\sigma$ is solution to $\widetilde{\cN}(\varepsilon_F^\sigma,\sigma) = N$. From the previous proposition together with the implicit function theorem we directly get the following result.

\begin{lemma}\label{lem:uniqueFL} For any smearing \REV{\old{method}mollifier $\delta^1$} of order $p \ge 1$, there exists $\sigma_1, \delta_{1} > 0$, such that for  $|\sigma| < \sigma_1$, the equation
  $\widetilde{\cN}(\cdot,\sigma)=N$ has a unique
  solution $\varepsilon_F^\sigma$ in
  $(\varepsilon_{F}-\delta_{1},\varepsilon_{F}+\delta_{1})$. In
  addition, the function
  $(-\sigma_1,\sigma_1) \ni \sigma \mapsto \varepsilon_F^\sigma \in
  \R$ is smooth, and it holds that
\begin{equation*}
\varepsilon_F^{\sigma = 0} =\varepsilon_F, \quad \forall 1 \le n \le p, \quad \left. \frac{\rd^n\varepsilon_F^\sigma}{\rd \sigma^n}\right|_{\sigma=0} = 0, \quad 
\left. \frac{\rd^{p+1}\varepsilon_F^\sigma}{\rd \sigma^{p+1}}\right|_{\sigma=0} = -\frac{\cD^{(p)}(\varepsilon_F)}{\cD(\varepsilon_F)} M_{p+1}(\delta^1).
\end{equation*}
In particular, it holds that
\begin{equation*} 
   \varepsilon_F^\sigma = \varepsilon_F + \dfrac{1}{(p+1)!} \left( -\frac{\cD^{(p)}(\varepsilon_F)}{\cD(\varepsilon_F)} M_{p+1}(\delta^1) \right) \sigma^{p+1} + O(\sigma^{p+2}).
\end{equation*}
\end{lemma}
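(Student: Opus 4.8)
The plan is to obtain the claimed statement as a straightforward application of the implicit function theorem to the smooth function $\widetilde{\cN}$ introduced via Lemma~\ref{lem:propg}, once we know enough about its partial derivatives at $\sigma=0$. Indeed, by Lemma~\ref{lem:Nsigma} we already know that $\widetilde{\cN}$ is smooth on $(\varepsilon_F-\delta_0,\varepsilon_F+\delta_0)\times\R$ and, crucially, that $\frac{\partial\widetilde{\cN}}{\partial\varepsilon}(\varepsilon_F,0)=\cD(\varepsilon_F)>0$. Since $\widetilde{\cN}(\varepsilon_F,0)=\cN(\varepsilon_F)=N$, the implicit function theorem yields $\sigma_1,\delta_1>0$ and a smooth function $\sigma\mapsto\varepsilon_F^\sigma$ on $(-\sigma_1,\sigma_1)$ with $\varepsilon_F^{\sigma=0}=\varepsilon_F$, with $(\varepsilon,\sigma)=(\varepsilon_F^\sigma,\sigma)$ the unique solution of $\widetilde{\cN}(\varepsilon,\sigma)=N$ near $(\varepsilon_F,0)$. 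Shrinking $\delta_1$ if necessary so that $\frac{\partial\widetilde{\cN}}{\partial\varepsilon}>0$ on $(\varepsilon_F-\delta_1,\varepsilon_F+\delta_1)\times(-\sigma_1,\sigma_1)$ guarantees that $\widetilde{\cN}(\cdot,\sigma)$ is strictly increasing there, hence the solution is unique in that interval.

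Next I would compute the Taylor coefficients of $\sigma\mapsto\varepsilon_F^\sigma$ at $\sigma=0$ by differentiating the identity $\widetilde{\cN}(\varepsilon_F^\sigma,\sigma)=N$ repeatedly in $\sigma$ and evaluating at $\sigma=0$. Writing $e(\sigma):=\varepsilon_F^\sigma$ and using Faà di Bruno / the chain rule, the $n$-th derivative of the composite has the form $\frac{\partial\widetilde{\cN}}{\partial\varepsilon}(e(\sigma),\sigma)\,e^{(n)}(\sigma)+(\text{terms involving lower-order derivatives of }e\text{ and mixed }\partial\text{-derivatives of }\widetilde{\cN})=0$. An induction on $n$ from $1$ to $p$ shows $e^{(n)}(0)=0$: at each step, every term in the "remainder" either contains a factor $e^{(j)}(0)$ with $1\le j<n$ (zero by the inductive hypothesis) or contains a pure $\sigma$-derivative $\frac{\partial^i\widetilde{\cN}}{\partial\sigma^i}(\varepsilon_F,0)$ with $1\le i\le n\le p$, which vanishes by Lemma~\ref{lem:Nsigma}; mixed derivatives $\frac{\partial^{m+i}\widetilde{\cN}}{\partial\varepsilon^m\partial\sigma^i}(\varepsilon_F,0)$ with $i\ge1$ are likewise zero since by Lemma~\ref{lem:propg} they equal $\cN^{(m)}(\varepsilon_F)M_i(\delta^1)=\cN^{(m)}(\varepsilon_F)\cdot0$. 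Hence $e'(0)=\dots=e^{(p)}(0)=0$.

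For the $(p+1)$-st derivative, differentiating $\widetilde{\cN}(e(\sigma),\sigma)=N$ exactly $p+1$ times and evaluating at $\sigma=0$, all cross terms involving $e^{(j)}(0)$ with $1\le j\le p$ drop out, so one is left with
\begin{equation*}
\frac{\partial\widetilde{\cN}}{\partial\varepsilon}(\varepsilon_F,0)\,e^{(p+1)}(0) + \frac{\partial^{p+1}\widetilde{\cN}}{\partial\sigma^{p+1}}(\varepsilon_F,0) = 0.
\end{equation*}
By Lemma~\ref{lem:Nsigma}, $\frac{\partial\widetilde{\cN}}{\partial\varepsilon}(\varepsilon_F,0)=\cD(\varepsilon_F)$ and $\frac{\partial^{p+1}\widetilde{\cN}}{\partial\sigma^{p+1}}(\varepsilon_F,0)=\cD^{(p)}(\varepsilon_F)M_{p+1}(\delta^1)$, which gives $e^{(p+1)}(0)=-\frac{\cD^{(p)}(\varepsilon_F)}{\cD(\varepsilon_F)}M_{p+1}(\delta^1)$, as claimed. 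The final asymptotic expansion of $\varepsilon_F^\sigma$ is then just Taylor's theorem applied to the smooth function $\sigma\mapsto e(\sigma)$ with the vanishing of the first $p$ derivatives and the explicit value of $e^{(p+1)}(0)$ plugged in, the remainder being $O(\sigma^{p+2})$ by smoothness.

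The only mildly delicate point is the bookkeeping in the induction proving $e^{(n)}(0)=0$ for $n\le p$: one must be careful that the generic term in the Faà di Bruno expansion of $\frac{d^n}{d\sigma^n}\big[\widetilde{\cN}(e(\sigma),\sigma)\big]$ other than $\frac{\partial\widetilde{\cN}}{\partial\varepsilon}e^{(n)}$ always contains at least one $\sigma$-derivative of $\widetilde{\cN}$ of order between $1$ and $n$, or a lower-order $e^{(j)}$; this is automatic because a term with no $\sigma$-derivative of $\widetilde{\cN}$ at all must be of the form $\frac{\partial^r\widetilde{\cN}}{\partial\varepsilon^r}$ times a product of derivatives of $e$ summing to $n$ with each factor of order $\ge1$, and for $r\ge2$ at least one factor has order $<n$. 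I expect no real obstacle here, only a careful combinatorial argument; all the analytic content (smoothness, the moment conditions, positivity of $\cD(\varepsilon_F)$) has already been established in Lemmas~\ref{lem:propg}, \ref{lem:Nsigma} and~\ref{lemma:DOS}.
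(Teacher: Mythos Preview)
Your proposal is correct and follows exactly the approach the paper indicates (implicit function theorem applied to $\widetilde{\cN}$, with the derivative identities of Lemma~\ref{lem:Nsigma} and Lemma~\ref{lem:propg} feeding the chain-rule computation). One tiny slip: by Lemma~\ref{lem:propg} the mixed derivative equals $\cN^{(m+i)}(\varepsilon_F)M_i(\delta^1)$, not $\cN^{(m)}(\varepsilon_F)M_i(\delta^1)$, but since $M_i(\delta^1)=0$ for $1\le i\le p$ this does not affect the argument.
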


\subsubsection{Error on the ground-state energy}

By choosing $A_{n \bk} = \varepsilon_{n \bk}$ in \eqref{eq:Amusigma}, we have that
$E^{\sigma}(\varepsilon) = \widetilde{E}(\varepsilon, \sigma)$, where we set for clarity $\widetilde{E} := \cM_{E,\delta^1}$. Numerically, the true ground state energy $E$ is approximated by $E^\sigma := \widetilde{E}(\varepsilon_F^\sigma, \sigma)$. 
\begin{lemma} \label{lem:smearedE}
For any smearing \REV{\old{method}mollifier $\delta^1$} of order $p \ge 1$, there exists $\sigma_1 > 0$ such that the function $\sigma \mapsto \widetilde{E}$ is well-defined and smooth on $(-\sigma_1,\sigma_1) \times \R$, and satisfies
\begin{equation*}
    \dfrac{\partial \widetilde{E}}{\partial \varepsilon}(\varepsilon, 0)= \varepsilon \cD(\varepsilon), \quad  
    \forall 1 \le n \le p, \ \dfrac{\partial^n \widetilde{E}}{\partial \sigma^n}(\varepsilon, 0) = 0,
    \quad
     \dfrac{\partial^{p+1} \widetilde{E}}{\partial \sigma^{p+1}}(\varepsilon, 0) = \left( \varepsilon \cD^{(p)}(\varepsilon) + p \cD^{(p-1)}(\varepsilon)\right) M_{p+1}(\delta^1)
\end{equation*}
In particular, it holds that  
\begin{align*}
E^{\sigma = 0}=E_0, \quad \forall 1 \le n \le p, \quad \left. \frac{\rd^n E^\sigma}{\rd \sigma^n}\right|_{\sigma=0} = 0, \quad 
\left. \frac{\rd^{p+1}E^\sigma}{\rd \sigma^{p+1}}\right|_{\sigma=0} = p \cD^{(p-1)}(\varepsilon_F) M_{p+1}(\delta^1) .
\end{align*}
Finally, we have
\begin{equation} \label{eq:estimationEsigma}
    E^\sigma = E + \dfrac{1}{(p+1)!} \left( p \cD^{(p-1)}(\varepsilon_F) M_{p+1}(\delta^1) \right) \sigma^{p+1} + O(\sigma^{p+2}).
\end{equation}
\end{lemma}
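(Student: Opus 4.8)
The plan is to recognize $\widetilde E=\cM_{E,\delta^1}$ as the mollification of the integrated density of energy $E$, which is smooth in a neighbourhood of $\varepsilon_F$ by the discussion following Lemma~\ref{lemma:DOS}, to apply Lemma~\ref{lem:propg} to obtain the smoothness of $\widetilde E$ and its $\sigma$-derivatives at $\sigma=0$, and then to compose with the smooth curve $\sigma\mapsto\varepsilon_F^\sigma$ of Lemma~\ref{lem:uniqueFL}, collecting Taylor coefficients in $\sigma$. First I would justify $E^\sigma(\varepsilon)=\widetilde E(\varepsilon,\sigma)$, i.e.\ that the manipulation \eqref{eq:Amusigma} is valid for $A_{n\bk}=\varepsilon_{n\bk}$: for $\sigma>0$ this is Fubini's theorem, whose hypothesis holds because $\varepsilon_{n\bk}\ge\underline{C_1}+\underline{C_2}\,n^{2/d}$ in \eqref{eq:comparison_eigenvalues} forces $(\varepsilon_{n\bk}-\varepsilon)/\sigma\gtrsim n^{2/d}/\sigma$ for large $n$, while $x\mapsto\int_x^{+\infty}|\delta^1|$ decays faster than any power as $x\to+\infty$ since $\delta^1\in\cS(\R)$; as $|\varepsilon_{n\bk}|\lesssim n^{2/d}$, the band sum converges absolutely. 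Since $E$ grows at most polynomially in $\varepsilon$ it lies in $\cS'(\R)$, so $\cM_{E,\delta^1}$ is defined for every $\sigma\neq0$ (and for $\sigma<0$ by the $\cS'$-pairing in \eqref{eq:def:cMAdelta}), with continuous extension $\widetilde E(\varepsilon,0)=E(\varepsilon)M_0(\delta^1)=E(\varepsilon)$.

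Lemma~\ref{lem:propg} (with $g=E$, $\phi=\delta^1$, $k=\infty$) then shows that $\widetilde E$ is smooth on $(\varepsilon_F-\delta_0,\varepsilon_F+\delta_0)\times\R$ and that
\[
\frac{\partial^{m+n}\widetilde E}{\partial\varepsilon^m\partial\sigma^n}(\varepsilon,0)=E^{(m+n)}(\varepsilon)\,M_n(\delta^1).
\]
Reading this off: $(m,n)=(1,0)$ gives $\partial_\varepsilon\widetilde E(\varepsilon,0)=E'(\varepsilon)=\varepsilon\cD(\varepsilon)$ by \eqref{eq:DoE}; $m=0$ with $1\le n\le p$ gives $E^{(n)}(\varepsilon)M_n(\delta^1)=0$ because the method has order $p$; and $m=0$, $n=p+1$ gives $E^{(p+1)}(\varepsilon)M_{p+1}(\delta^1)$, where differentiating the identity $E'=\varepsilon\cD$ exactly $p$ times by the Leibniz rule leaves only the two terms coming from the zeroth and first derivatives of $\varepsilon$, namely $E^{(p+1)}(\varepsilon)=\varepsilon\,\cD^{(p)}(\varepsilon)+p\,\cD^{(p-1)}(\varepsilon)$. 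This is the first display of the lemma; note it uses $p\ge1$.

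For $E^\sigma=\widetilde E(\varepsilon_F^\sigma,\sigma)$: Lemma~\ref{lem:uniqueFL} provides $\sigma_1>0$ so that on $(-\sigma_1,\sigma_1)$ the map $\sigma\mapsto\varepsilon_F^\sigma$ is smooth, equals $\varepsilon_F$ at $0$, stays in $(\varepsilon_F-\delta_0,\varepsilon_F+\delta_0)$, and satisfies $\varepsilon_F^\sigma=\varepsilon_F+c_{p+1}\sigma^{p+1}+O(\sigma^{p+2})$ with $c_{p+1}=-\tfrac{1}{(p+1)!}\tfrac{\cD^{(p)}(\varepsilon_F)}{\cD(\varepsilon_F)}M_{p+1}(\delta^1)$; hence $\sigma\mapsto E^\sigma$ is smooth near $0$. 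From the previous paragraph, $\widetilde E(\varepsilon,\sigma)=E(\varepsilon)+\tfrac{\sigma^{p+1}}{(p+1)!}E^{(p+1)}(\varepsilon)M_{p+1}(\delta^1)+O(\sigma^{p+2})$ uniformly for $\varepsilon$ in a compact neighbourhood of $\varepsilon_F$. Substituting $\varepsilon=\varepsilon_F^\sigma$, using $E(\varepsilon_F^\sigma)=E(\varepsilon_F)+E'(\varepsilon_F)c_{p+1}\sigma^{p+1}+O(\sigma^{p+2})$ (the quadratic term is $O(\sigma^{2p+2})\subseteq O(\sigma^{p+2})$ since $p\ge1$) and $E^{(p+1)}(\varepsilon_F^\sigma)=E^{(p+1)}(\varepsilon_F)+O(\sigma^{p+1})$, one obtains
\[
E^\sigma=E(\varepsilon_F)+\Big(E'(\varepsilon_F)\,c_{p+1}+\tfrac{1}{(p+1)!}E^{(p+1)}(\varepsilon_F)M_{p+1}(\delta^1)\Big)\sigma^{p+1}+O(\sigma^{p+2}).
\]
Inserting $E'(\varepsilon_F)=\varepsilon_F\cD(\varepsilon_F)$, $E^{(p+1)}(\varepsilon_F)=\varepsilon_F\cD^{(p)}(\varepsilon_F)+p\cD^{(p-1)}(\varepsilon_F)$ and the value of $c_{p+1}$, the two $\varepsilon_F\cD^{(p)}(\varepsilon_F)$ contributions cancel and the coefficient collapses to $\tfrac{1}{(p+1)!}\,p\,\cD^{(p-1)}(\varepsilon_F)\,M_{p+1}(\delta^1)$, which is \eqref{eq:estimationEsigma}; reading off the Taylor coefficients of the smooth function $\sigma\mapsto E^\sigma$ then gives $E^{\sigma=0}=E(\varepsilon_F)=E$, $\tfrac{\rd^n E^\sigma}{\rd\sigma^n}\big|_{\sigma=0}=0$ for $1\le n\le p$, and $\tfrac{\rd^{p+1}E^\sigma}{\rd\sigma^{p+1}}\big|_{\sigma=0}=p\,\cD^{(p-1)}(\varepsilon_F)M_{p+1}(\delta^1)$.

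The main obstacle is the bookkeeping around this last cancellation rather than any single hard estimate: one must make sure the $\sigma$-Taylor expansion of $\widetilde E$ is uniform in $\varepsilon$ near $\varepsilon_F$ (so that the substitution $\varepsilon=\varepsilon_F^\sigma$ is legitimate), and track which error terms are genuinely $O(\sigma^{p+2})$ — this is where $p\ge1$, hence $2p+2\ge p+2$, and the fact that the first nonvanishing $\sigma$-derivatives of both $\widetilde E$ and $\varepsilon_F^\sigma$ occur at order $p+1$, are used. Conceptually the cancellation is the analytic counterpart of Remark~\ref{rem:numericalIntegration}: to leading order the sensitivity of the energy to a displacement of the Fermi level equals $\varepsilon_F$ times the error in the particle number, and the latter is pinned to $N$ by the definition of $\varepsilon_F^\sigma$, which is exactly why the $\varepsilon_F\cD^{(p)}(\varepsilon_F)$ terms drop out.
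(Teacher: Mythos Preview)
Your proposal is correct and follows essentially the same approach as the paper: apply Lemma~\ref{lem:propg} with $g=E$ and $\phi=\delta^1$, use the identity $E'=\varepsilon\cD$ from~\eqref{eq:DoE}, and then compose with $\sigma\mapsto\varepsilon_F^\sigma$ from Lemma~\ref{lem:uniqueFL}. The paper's own proof is a two-sentence sketch (``consequence of Lemma~\ref{lem:propg} \dots\ together with~\eqref{eq:DoE}; the second part comes from Lemma~\ref{lem:uniqueFL} and the chain rule''), and what you have written is precisely the unpacking of that sketch, including the Fubini justification of~\eqref{eq:Amusigma}, the Leibniz computation of $E^{(p+1)}$, and the explicit cancellation of the $\varepsilon_F\cD^{(p)}(\varepsilon_F)$ terms.
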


\begin{proof} The first part is a consequence of Lemma~\ref{lem:propg} applied to $g=E(\cdot)$ (which is smooth on $(\varepsilon_F-\delta_0,\varepsilon_F+\delta_0)$) and $\phi=\delta^1$, together with~\eqref{eq:DoE}. The second part comes from Lemma~\ref{lem:uniqueFL}, and the chain rule.
\end{proof}

\ul{\bf Extrapolation of the energy.}
    Following Marzari~\cite{ThesisMarzari}, we can introduce the entropy
    defined as
    \begin{equation} \label{eq:def:Ssigma}
        S^\sigma = \sum_{n \in \N^\ast} \fint_\cB s\left(
        \frac{\varepsilon_{n\bk}-\varepsilon_F^\sigma}{\sigma} \right) \,
        \rd \bk,
    \end{equation}
    where $s : \R \to \R$ is the unique function of $\cS(\R)$ such that
    $s'(t) = t \delta^1(t)$. 
    Following the same computation as in \eqref{eq:Amusigma} with $f^1 = s$, we see that
    $S^\sigma = \widetilde S(\varepsilon_F^\sigma,\sigma)$, where we set $\widetilde{S} := \cM_{\cN,-t\delta^1}$. Note also that $M_n(t\delta^1) = M_{n+1}(\delta^1)$ for all $0 \le n \le p-1$. As in the proof of Lemma~\ref{lem:smearedE}, we deduce that if $\delta^1$ is of order $p \ge 1$, then $\widetilde{S}$ is smooth on $(-\sigma_1,\sigma_1) \times \R$, with
    \begin{align*}
    \forall 1 \le n \le p-1, \ 
    \dfrac{\partial^n \widetilde{S}}{\partial \varepsilon^n}(\varepsilon,0) = 0, \quad
     \dfrac{\partial^n \widetilde{S}}{\partial \sigma^n}(\varepsilon,0) = 0, \quad
      \dfrac{\partial^{p} \widetilde{S}}{\partial \sigma^{p}}(\varepsilon,0) = - \cD^{(p-1)} M_{p+1}(\delta^1).
    \end{align*}
    Together with Lemma~\ref{lem:uniqueFL}, and the chain rule, this lead to
    \begin{equation*}
       \forall 1 \le n \le p-1, \
       \left. \dfrac{\rd^n S^\sigma}{\rd \sigma^n} \right|_{\sigma = 0} = 0, \quad 
       \left.\frac{\rd^{p}S^\sigma}{\rd \sigma^{p}}\right|_{\sigma=0} = - \cD^{(p-1)}(\varepsilon_F) M_{p+1}(\delta^1).
    \end{equation*}
    Thus,
    \begin{equation} \label{eq:Ssigma}
    S^\sigma = \dfrac{1}{p!} \left( - \cD^{(p-1)}(\varepsilon_F) M_{p+1}(\delta^1) \right) \sigma^p + O(\sigma^{p+1}).
    \end{equation}
    From~\eqref{eq:estimationEsigma} and~\eqref{eq:Ssigma}, we finally obtain that
    \begin{align*}
         E^\sigma - \sigma \frac{p}{p+1}S^{\sigma} = E + O(\sigma^{p+2})
    \end{align*}
    As pointed out in~\cite{ThesisMarzari}, the right-hand side provides an approximation of
    $E$ which is consistent of order $p+2$, and therefore outperforms the
    estimator $E^\sigma$ in the asymptotic regime when $\sigma$ goes to
    zero. This is numerically useful, as, from the definition of $S^\sigma$ in~\eqref{eq:def:Ssigma}, $S^\sigma$ can be easily computed numerically.

\medskip
\subsubsection{Error on the ground-state density}
\begin{lemma} \label{lem:smearingE}
Consider a smearing \REV{\old{method}mollifier $\delta^1$} of order $p \geq 1$.
Under Assumptions 1 and 2 and with the notation of Lemma~\ref{lem:coverBZ}, there exists $\sigma_1 > 0$ and $C \in \R_+$ such that 
\begin{equation} \label{eq:estim_rho}
\forall \sigma \in (-\sigma_1,\sigma_1), \quad \left\| \rho - \rho^{\sigma} \right\|_{H^{s+2}_\per} \le C \sigma^{p+1}.
\end{equation}
\end{lemma}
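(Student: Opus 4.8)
The plan is to reduce the $H^{s+2}_\per$-valued estimate to a family of scalar estimates by duality, and then to run, for each test functional, the argument already used in the proofs of Lemmas~\ref{lem:Nsigma} and~\ref{lem:smearedE}. Write $\|\rho-\rho^\sigma\|_{H^{s+2}_\per}=\sup\{\langle W,\rho-\rho^\sigma\rangle : W\in H^{-(s+2)}_\per,\ \|W\|_{H^{-(s+2)}_\per}\le 1\}$, fix such a $W$, and set $R(\varepsilon):=\langle W,\rho_\varepsilon\rangle=\sum_{n\in\N^*}\fint_\BZ\langle W,|u_{n\bk}|^2\rangle\,\1(\varepsilon_{n\bk}\le\varepsilon)\,\rd\bk$; the aim is to prove $|\langle W,\rho-\rho^\sigma\rangle|\le C\sigma^{p+1}$ with $C$ independent of $W$. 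Two preliminary facts legitimate the manipulations below: by classical elliptic regularity together with~\eqref{eq:comparison_eigenvalues}, $\||u_{n\bk}|^2\|_{H^{s+2}_\per}$ grows at most polynomially in $n$, uniformly in $\bk\in\BZ$, so that the series defining $R$ and $\rho^\sigma$ and all interchanges of $\sum_n\fint_\BZ$ with $\int_\R$ converge absolutely; and consequently $R$ is a tempered distribution on $\R$, vanishing for $\varepsilon$ small.

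The key step is to show that $R$ is $C^\infty$ on $(\varepsilon_F-\delta_0,\varepsilon_F+\delta_0)$ with $|R^{(j)}(\varepsilon)|\le C_j\|W\|_{H^{-(s+2)}_\per}$ for each $j\ge 1$ on a slightly smaller interval; equivalently, $\varepsilon\mapsto\rho_\varepsilon$ is a $C^\infty$ map into $H^{s+2}_\per$ near $\varepsilon_F$. This is proved exactly as Lemma~\ref{lemma:DOS}: since $\widetilde{\cS}_n(\varepsilon_F,\delta_0)\ne\emptyset$ only for $\underline M\le n\le\overline M$ (Lemma~\ref{lem:coverBZ}), for $\varepsilon$ near $\varepsilon_F$ one has
\[ R(\varepsilon)=R(\varepsilon_F-\delta_0)+\frac1{|\BZ|}\sum_{n=\underline M}^{\overline M}\int_{\varepsilon_F-\delta_0}^{\varepsilon}\left(\int_{\cS_n(\varepsilon')}\frac{\langle W,|u_{n\bk}|^2\rangle}{|\nabla\varepsilon_{n\bk}|}\,\rd\sigma(\bk)\right)\rd\varepsilon' \]
by the co-area formula~\eqref{eq:coarea2} applied on the region $\widetilde{\cS}_n(\varepsilon_F,\delta_0)$. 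There, arguing as in the last part of the proof of Lemma~\ref{lem:coverBZ} (Assumption~1 yields a uniform spectral gap above the $n$-th band, so that both $\bk\mapsto\gamma_{n\bk}$ and $\bk\mapsto\gamma_{n-1,\bk}$ are real-analytic into $\fS_{1,s+2}$), the individual density $|u_{n\bk}|^2=\rho_{n\bk}-\rho_{n-1,\bk}$ is real-analytic into $H^{s+2}_\per$, $\varepsilon_{n\bk}$ is real-analytic, and $|\nabla\varepsilon_{n\bk}|\ge C_\nabla$ by Assumption~2. Lemma~\ref{lemma:diff_integral_param} then gives that each inner integral is a $C^\infty$ function of $\varepsilon'$ whose derivatives are bounded by $C_j\|W\|_{H^{-(s+2)}_\per}$, which yields the claimed bounds on $R^{(j)}$.

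With this in hand, the conclusion follows the template of Lemma~\ref{lem:smearedE}. Testing $\rho^\sigma(\varepsilon)$ against $W$ and carrying out the computation~\eqref{eq:Amusigma} with $A_{n\bk}=\langle W,|u_{n\bk}|^2\rangle$ gives $\langle W,\rho^\sigma(\varepsilon)\rangle=\cM_{R,\delta^1}(\varepsilon,\sigma)=:\widetilde R(\varepsilon,\sigma)$. Applying Lemma~\ref{lem:propg} to $g=R$ and $\phi=\delta^1$, and using $M_0(\delta^1)=1$ and $M_n(\delta^1)=0$ for $1\le n\le p$, shows that $\widetilde R$ is $C^\infty$ on $(\varepsilon_F-\delta_0,\varepsilon_F+\delta_0)\times\R$, with $\widetilde R(\varepsilon,0)=R(\varepsilon)$, $\partial_\varepsilon\widetilde R(\varepsilon,0)=R'(\varepsilon)$, $\partial_\sigma^n\widetilde R(\varepsilon,0)=0$ for $1\le n\le p$, and (since the bounds in Lemma~\ref{lem:propg} involve only $\|R^{(j)}\|_\infty$) with its $C^{p+1}$-norm on a fixed neighbourhood of $(\varepsilon_F,0)$ bounded by $C\|W\|_{H^{-(s+2)}_\per}$. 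Now $\langle W,\rho-\rho^\sigma\rangle=R(\varepsilon_F)-\widetilde R(\varepsilon_F^\sigma,\sigma)$, and by Lemma~\ref{lem:uniqueFL} we have $|\varepsilon_F^\sigma-\varepsilon_F|\le C\sigma^{p+1}$ with $C$ independent of $W$, so $(\varepsilon_F^\sigma,\sigma)$ lies in that neighbourhood for $\sigma$ small. Taylor-expanding $\widetilde R$ around $(\varepsilon_F,0)$ to order $p+1$ in both variables: the $m=n=0$ term is $R(\varepsilon_F)$ and cancels, every monomial with a pure $\sigma$-derivative of order between $1$ and $p$ vanishes, every monomial carrying a factor $(\varepsilon_F^\sigma-\varepsilon_F)^m$ with $m\ge1$ is $O(\sigma^{m(p+1)})=O(\sigma^{p+1})$, and the Taylor remainder is $O((|\varepsilon_F^\sigma-\varepsilon_F|+\sigma)^{p+1})=O(\sigma^{p+1})$, all with implied constants $\le C\|W\|_{H^{-(s+2)}_\per}$. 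Taking the supremum over $\|W\|_{H^{-(s+2)}_\per}\le1$ yields~\eqref{eq:estim_rho}.

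I expect the regularity step to be the main obstacle: one must promote the per-band, cumulative analyticity recorded in Lemma~\ref{lem:coverBZ} (which concerns $\rho_{n\bk}$ on $\widetilde{\cB}_n$) to genuine smoothness of $\varepsilon\mapsto\rho_\varepsilon$ near $\varepsilon_F$ as an $H^{s+2}_\per$-valued map, with constants uniform in the test functional. This hinges on Assumption~1, which guarantees that on the relevant sheets $\widetilde{\cS}_n(\varepsilon_F,\delta_0)$ even the individual densities $|u_{n\bk}|^2$ (not merely the cumulative ones) are smooth, so that the co-area formula may be applied band by band, and on Assumption~2 through $|\nabla\varepsilon_{n\bk}|\ge C_\nabla$, which is what makes $\varepsilon\mapsto\int_{\cS_n(\varepsilon)}(\cdots)$ depend smoothly on $\varepsilon$. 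The remaining ingredient—absolute convergence of the band sums and the interchange of summation and integration in~\eqref{eq:Amusigma}—is routine once the polynomial-in-$n$ bound on $\||u_{n\bk}|^2\|_{H^{s+2}_\per}$ is noted.
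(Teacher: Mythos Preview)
Your proof is correct and follows essentially the same approach as the paper's: both argue by duality against $W\in H^{-(s+2)}_\per$, establish the polynomial-in-$n$ growth of $\langle W,|u_{n\bk}|^2\rangle$ to justify tempered-distribution manipulations, prove smoothness of $\varepsilon\mapsto\langle W,\rho_\varepsilon\rangle$ near $\varepsilon_F$ via the co-area formula, and then combine Lemma~\ref{lem:propg} with the Fermi-level estimate of Lemma~\ref{lem:uniqueFL}. The only cosmetic difference is in the final step: the paper splits $|R(\varepsilon_F)-\widetilde R(\varepsilon_F^\sigma,\sigma)|\le|R(\varepsilon_F)-R(\varepsilon_F^\sigma)|+|R(\varepsilon_F^\sigma)-\widetilde R(\varepsilon_F^\sigma,\sigma)|$ and bounds each piece separately (mean value theorem for the first, Lemma~\ref{lem:propg} for the second), whereas you perform a single two-variable Taylor expansion of $\widetilde R$ about $(\varepsilon_F,0)$; the paper's splitting is slightly more economical since it only requires $R'$ for the first term rather than full mixed derivatives, but both arguments are equivalent.
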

\begin{proof}
  Let $W \in H^{-(s+2)}_\per$, and
  set
  $W_{n\bk} := \left\langle W, |u_{n\bk}|^2
  \right\rangle_{H^{-(s+2)}_\per, H^{s+2}_\per}$. We also set
  \begin{align*}
    A_W(\varepsilon) :=& \sum_{n \in \N^*} \fint_{\BZ} W_{n\bk} \1(\varepsilon_{n\bk}\le \varepsilon) \rd \bk 
    \quad \text{and} \quad
    A_W^{\sigma}(\varepsilon) :=& \sum_{n \in \N^{*}}\fint_{\BZ} W_{n\bk} f^\sigma\left({\varepsilon_{n\bk}-\varepsilon}\right) \, \rd \bk.
  \end{align*}
  
 In this proof, $C$ denotes a positive constant independent of $W$, $n$, $\bk$ and $\sigma$, for $\sigma$ small enough, but whose value can vary from line to line. 
  Since $H^{s+2}_\per$ is an algebra for
  $s \ge 0$ and $d \le 3$, we have that
  \[
    \left| W_{n \bk} \right| =\left| \left\langle W , | u_{n \bk}
        |^2\right\rangle_{H^{-(s+2)}_\per, H^{s+2}_\per} \right|\le C \| W \|_{H^{-(s+2)}_\per}
    \| u_{n \bk} \|_{H^{s+2}_{\per}}^2.
  \]
  Besides, using the equality
  $H_\bk u_{n \bk} = \varepsilon_{n \bk} u_{n \bk}$,
  \eqref{eq:comparison_eigenvalues}, the continuity of the pointwise
  product $H^{s+2}_\per \times H^s_\per$ to $H^s_\per$, and a
  bootstrap argument, we see that
  \begin{align}
    \label{eq:growth_Wnk}
    \forall n \ge 1, \quad \forall \bk \in \BZ, \quad \| u_{n \bk} \|_{H^{s+2}_{\per}} \le C \left(1 + \varepsilon_{n \bk}^{s+2} \right) \le C n^{\frac{2}{d}(s+2)}.
  \end{align}
  Therefore, we have
  \begin{equation*} \label{eq:estimWnk} \left| W_{n \bk} \right| \le
    C \| W \|_{H^{-(s+2)}_\per} n^{\frac{4}{d}(s+2)}.
  \end{equation*}
  This estimate shows that $A_{W}$ is a tempered distribution (as a
  continuous function of polynomial growth), and
  that the computation in \eqref{eq:Amusigma} is justified. In particular, we have
  \begin{align*}
    A_W^{\sigma}(\varepsilon) = \cM_{A_{W},\delta^1}(\varepsilon,\sigma).
  \end{align*}
  From similar considerations as in Lemma~\ref{lemma:DOS}, $A_W$ is
  smooth on $[\eps_{F}-\delta_{0},\varepsilon_{F}+\delta_{0}]$, and
\begin{equation*} \label{eq:AW'}
  A_W'(\varepsilon) =  \dfrac{1}{|\BZ|}  \sum_{n \le \overline{M}} \int_{\cS_n(\varepsilon)} \dfrac{W_{n\bk}}{| \nabla \varepsilon_{n\bk}|} \rd \sigma(\bk).
\end{equation*}
It follows that
\begin{align*}
  |\left\langle W,\rho - \rho^{\sigma}\right\rangle_{H^{-(s+2)}_\per, H^{s+2}_\per}|  = & |A_W(\varepsilon_F)-A_W^{\sigma}(\varepsilon_F^\sigma)|  \nonumber  \leq | A_W(\varepsilon_F)-A_W(\varepsilon_F^\sigma)| + \left|  A_W(\varepsilon_F^\sigma) - A_W^{\sigma}(\varepsilon_F^\sigma) \right|\\
  \leq& \left(\max_{\varepsilon \in [\varepsilon_F-\delta_0,\varepsilon_F+\delta_0]}  \left|{A_W'}(\varepsilon)\right|\right)|\varepsilon_{F}-\varepsilon_{F}^{\sigma}| + \left|  A_W(\varepsilon_F^\sigma) - A_W^{\sigma}(\varepsilon_F^\sigma) \right|\\
  \leq& \left( C \| W \|_{H^{-(s+2)}_\per}  \right) \sigma^{p+1},
\end{align*}
where we have used Lemma \ref{lem:propg} and Lemma
\ref{lem:uniqueFL}. The result follows.
\end{proof}

\subsubsection{Proof of Lemma~\ref{lem:propg}}
We finally prove Lemma~\ref{lem:propg}. Let $g \in \cS'(\R)$, $\varepsilon \in \R$ and
$\sigma \in \R^*$. We define the shifted and scaled tempered
distribution $g(\varepsilon+\sigma \cdot)$ by duality:
$$
\forall \phi \in \cS(\R), \quad  \langle g(\varepsilon+\sigma \cdot),\phi \rangle_{\cS',\cS} := \langle g,A_{\varepsilon,\sigma}\phi \rangle_{\cS',\cS},
$$
where $A_{\varepsilon,\sigma}$ is the linear map on $\cS(\R)$ defined by 
$$
\forall \phi \in \cS(\R), \quad \forall \varepsilon' \in \R, \quad
(A_{\varepsilon,\sigma}\phi)(\varepsilon') := \frac 1 {|\sigma|} \phi\left( \frac {\varepsilon'-\varepsilon}{\sigma} \right).
$$
This is consistent with the usual shift and scale operation for
functions. If $g$ is a tempered distribution that is continuous at $\varepsilon$, we define $g(\varepsilon + 0 \cdot)$ to be the constant tempered distribution with value $g(\varepsilon)$.

It is easy to check that the family $(A_{\varepsilon,\sigma})_{(\varepsilon,\sigma) \in \R \times \R^\ast}$ forms a group of continuous linear operators on $\cS(\R)$ satisfying for all $(\varepsilon,\sigma)$ and $(\varepsilon',\sigma')$ in $\R \times \R^\ast$,
$$
A_{\varepsilon,\sigma} A_{\varepsilon',\sigma'} = A_{\varepsilon+\sigma\varepsilon',\sigma\sigma'}.
$$
In addition, we have the following properties on the derivatives of $A_{\varepsilon,\sigma}$: for all $\phi \in \cS(\R)$,
\begin{align*}
& A_{\varepsilon',\sigma'}\phi \mathop{\longrightarrow}_{(\varepsilon',\sigma') \to (\varepsilon,\sigma)}A_{\varepsilon,\sigma}\phi , \\ 
& \frac{A_{\varepsilon',\sigma}-A_{\varepsilon,\sigma}}{\varepsilon'-\varepsilon}\phi \mathop{\longrightarrow}_{\varepsilon' \to \varepsilon} - \sigma^{-1} A_{\varepsilon,\sigma}\phi', \\
& \frac{A_{\varepsilon,\sigma'}-A_{\varepsilon,\sigma}}{\sigma'-\sigma}\phi \mathop{\longrightarrow}_{\varepsilon' \to \varepsilon} -\sigma^{-1} A_{\varepsilon,\sigma}L\phi, 
\end{align*}
the convergences holding in $\cS(\R)$, where $L$ is the continuous operator on $\cS(\R)$ defined by
$$
\forall \phi \in \cS(\R), \quad \forall t \in \R, \quad (L\phi)(t) = \frac{\rd}{\rd t} \left( t\phi(t) \right) = t\phi'(t)+\phi(t).
$$
It immediately follows that $\cM_{g,\phi}$ is of class $C^1$ on $(\varepsilon_F-\delta_0,\varepsilon_F+\delta_0) \times \R^\ast$ and that 
$$
\forall (\varepsilon,\sigma) \in \R \times \R^\ast, \quad \frac{\partial\cM_{g,\phi}}{\partial\varepsilon}(\varepsilon,\sigma)  = - \sigma^{-1} \langle g, A_{\varepsilon,\sigma} \phi' \rangle_{\cS',\cS}, \quad 
\frac{\partial\cM_{g,\phi}}{\partial\sigma}(\varepsilon,\sigma) = -\sigma^{-1} \langle g, A_{\varepsilon,\sigma} L\phi \rangle_{\cS',\cS}.
$$
Let us prove that $\cM_{g,\phi}$ is also $C^1$ at $\sigma = 0$. Let $\varepsilon \in (\varepsilon_F-\delta_0,\varepsilon_F+\delta_0)$, and let $\chi \in C^\infty_c(\R)$ be a cut-off function supported in a compact interval $K \subset (\varepsilon_F-\delta_0,\varepsilon_F+\delta_0)$ and equal to $1$ in a neighborhood $\cV$ of $\varepsilon$.
For $\phi \in \cS(\R)$, $\varepsilon' \in \cV$ and $\sigma \in \R^\ast$, we have
\begin{align*}
\langle g, A_{\varepsilon',\sigma}\phi \rangle_{\cS',\cS} &= \langle \chi g, A_{\varepsilon',\sigma}\phi \rangle_{\cS',\cS} + \langle (1-\chi) g, A_{\varepsilon',\sigma} \phi \rangle_{\cS',\cS} \\
&= \langle A_{\varepsilon',\sigma} \phi , \chi g\rangle_{C^{0}(K)',C^{0}(K)} + \langle g, (1-\chi) A_{\varepsilon',\sigma} \phi \rangle_{\cS',\cS}  \mathop{\longrightarrow}_{(\varepsilon',\sigma) \to (\varepsilon,0)} g(\varepsilon) M_0(\phi),
\end{align*}
since when $(\varepsilon',\sigma) \to (\varepsilon,0)$, $A_{\varepsilon',\sigma} \phi  \to M_0(\phi)\delta_\varepsilon$ in the space $C^{0}(K)'$ of bounded Borel measures on $K$, while $(1-\chi) A_{\varepsilon',\sigma} \phi$ goes to zero in $\cS(\R)$. This proves that $\cM_{g, \phi}$ is continuous on $(\varepsilon_F - \delta_0, \varepsilon_F, + \delta_0) \times \R$.

\medskip

If in addition, $g$ is of class $C^1$ on $(\varepsilon_F-\delta_0,\varepsilon_F+\delta_0)$, then for $\sigma \neq 0$,
\begin{align*}
\frac{\cM_{g,\phi}(\varepsilon,\sigma)-\cM_{g,\phi}(\varepsilon,0)}{\sigma} &= \frac{\langle \chi g, A_{\varepsilon,\sigma}\phi \rangle_{\cS',\cS} - M_0(\phi)(\chi g)(\varepsilon)}{\sigma} +   \frac{\langle (1-\chi) g, A_{\varepsilon,\sigma}\phi \rangle_{\cS',\cS}}{\sigma} \\ 
&= \langle  \sigma^{-1}(A_{\varepsilon,\sigma}\phi - M_0(\phi)\delta_\varepsilon), \chi g \rangle_{(C^1(K))',C^1(K)} +   \langle g, (1-\chi)  \sigma^{-1} A_{\varepsilon,\sigma}\phi \rangle_{\cS',\cS} \\
&  \mathop{\longrightarrow}_{\sigma \to 0}  (\chi g)'(\varepsilon) M_1(\phi) = g'(\varepsilon) M_1(\phi),
\end{align*}
since $\sigma^{-1}(A_{\varepsilon,\sigma}\phi - M_0(\phi)\delta_\varepsilon)$
converges to $M_{1}(\phi)\delta_{\varepsilon}'$ in $C^{1}(K)'$ , while
$(1-\chi) \sigma^{-1} A_{\varepsilon,\sigma}\phi$ converges to $0$ in
$\cS(\R)$. Hence, for
$\varepsilon \in (\varepsilon_F-\delta_0,\varepsilon_F+\delta_0)$, we have
$$
\frac{\partial \cM_{g,\phi}}{\partial\varepsilon}(\varepsilon,0) = g'(\varepsilon)M_0(\phi), \quad \frac{\partial \cM_{g,\phi}}{\partial\sigma}(\varepsilon,0) = g'(\varepsilon)M_1(\phi).
$$
Observing that $M_0(\phi')= 0$ and $M_0(L\phi)=0$, so that $M_1(\phi') = -M_0(\phi)$, and that, $M_1(L\phi) = - M_1(\phi)$ with an integration by part, and reasoning as above, we obtain that for $\varepsilon' \in \cV$ and $\sigma \in \R^\ast$,
\begin{align*}
&\frac{\partial \cM_{g,\phi}}{\partial\varepsilon}(\varepsilon,\sigma)= - \sigma^{-1} \langle g, A_{\varepsilon,\sigma} \phi' \rangle_{\cS',\cS} \mathop{\longrightarrow}_{\sigma \to 0} - g'(\varepsilon) M_1(\phi') = g'(\varepsilon) M_0(\phi), \\
&\frac{\partial\cM_{g,\phi}}{\partial\sigma}(\varepsilon,\sigma) = - \sigma^{-1} \langle g, A_{\varepsilon,\sigma} L\phi \rangle_{\cS',\cS}\mathop{\longrightarrow}_{\sigma \to 0} - g'(\varepsilon) M_1(L\phi) = g'(\varepsilon) M_1(\phi).
\end{align*}
It follows that $\cM_{g,\varepsilon}$ is of class $C^1$ in $(\varepsilon_F-\delta_0,\varepsilon_F+\delta_0) \times \R$. Similar arguments allow one to show that $\cM_{g,\varepsilon}$ is of class $C^k$ in $(\varepsilon_F-\delta_0,\varepsilon_F+\delta_0) \times \R$ whenever $g$ is of class $C^k$ in $(\varepsilon_F-\delta_0,\varepsilon_F+\delta_0)$. The proof follows by a straightforward induction.

\subsection{Error between smeared quantities and corresponding Riemann sums}
\label{sec:error-betw-smear}
We now investigate the convergence of $A^{\sigma,L}$ to $A^{\sigma}$,
for $\sigma$ fixed. As we already mentioned, the advantage of using
smearing functions is that the quantities ${A}^{\sigma}$ are defined
as the integral of smooth periodic functions. It is therefore natural
to approximate numerically the integral by a regular Riemann sum, for
which we can expect exponential convergence, depending on the analytic
properties of the integrand (see for instance
\cite{trefethen2014exponentially} for a review). For $Y > 0$, we
introduce the (closed) complex strip
\[
S_Y := \R^d + \ri [-Y, Y]^d =  \left\{ \bz \in \CC^d, \quad | \Im(\bz) |_\infty \le Y \right\}.
\]
We recall the following classical result, which is proved as in
\cite[Lemma 5.1]{Gontier2016_M2AN}.
\begin{lemma}
\label{lem:analyticity_implies_exp_decay}
There exists $C \in \R_+$ and $\eta > 0$ such that, for all $Y > 0$ and all $F : \CC^d \to \CC$ that is analytic on $S_Y$ and $\RLat$-periodic on $\R^{d}$, we have
\begin{align*}
\forall L \in \N^*, \quad \left| \fint_\BZ F(\bk) \rd \bk - \dfrac{1}{L^d} \sum_{\bk \in
\cB_L} F(\bk) \right| \le C \left(\max_{\bz \in S_Y} |F(\bz)|\right)
\dfrac{\re^{-\eta Y L}}{Y^d}.
\end{align*}
\end{lemma}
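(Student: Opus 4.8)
The plan is to reduce the statement to a Fourier-analytic estimate on the torus $\BZ = \R^d/\RLat$. Since $F$ is $\RLat$-periodic on $\R^d$ and analytic (hence smooth) on $S_Y \supset \R^d$, it admits an absolutely convergent Fourier expansion $F(\bk)=\sum_{\bR\in\Lat}c_{\bR}\re^{\ri\bR\cdot\bk}$, indexed by the lattice $\Lat$ dual to $\RLat$, with $c_{\bR}=\fint_{\BZ}F(\bk)\re^{-\ri\bR\cdot\bk}\rd\bk$. First I would establish the exact aliasing identity
\[
\fint_{\BZ}F(\bk)\rd\bk-\frac1{L^d}\sum_{\bk\in\cB_L}F(\bk)=-\sum_{\bR\in L\Lat\setminus\{\bnull\}}c_{\bR}.
\]
This follows from the elementary computation $\frac1{L^d}\sum_{\bk\in\cB_L}\re^{\ri\bR\cdot\bk}=\1(\bR\in L\Lat)$: writing $\bR$ in a fixed basis of $\Lat$ and the $L^d$ grid points in the corresponding scaled basis of $\RLat$, the sum factorizes into $d$ finite geometric sums, each equal to $L$ or $0$ according to whether the associated integer coordinate of $\bR$ is divisible by $L$. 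Combined with $\fint_{\BZ}F=c_{\bnull}$ and the absolute convergence of the Fourier series (which legitimizes exchanging the sum with the finite average), this gives the identity.

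\textbf{Decay of the Fourier coefficients.} The second ingredient is the exponential decay of $c_{\bR}$. Since $F$ is analytic on $S_Y$ and $\RLat$-periodic on $\R^d$, the identity theorem shows that $F(\bz+\bG)=F(\bz)$ for all $\bz\in S_Y$ and $\bG\in\RLat$; likewise $\bk\mapsto F(\bk)\re^{-\ri\bR\cdot\bk}$ extends to an $\RLat$-periodic analytic function on $S_Y$ because $\bR\cdot\bG\in2\pi\Z$ for $\bR\in\Lat$, $\bG\in\RLat$. Hence, for any $\by\in[-Y,Y]^d$, Cauchy's theorem allows the integration contour $\R^d$ in the definition of $c_{\bR}$ to be shifted to $\R^d+\ri\by$, the lateral contributions cancelling by periodicity. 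This yields $|c_{\bR}|\le\bigl(\max_{\bz\in S_Y}|F(\bz)|\bigr)\,\re^{\bR\cdot\by}$ for every such $\by$, and optimizing over $|\by|_\infty\le Y$ gives $|c_{\bR}|\le\bigl(\max_{S_Y}|F|\bigr)\,\re^{-Y\,\|\bR\|_1}\le\bigl(\max_{S_Y}|F|\bigr)\,\re^{-Y|\bR|}$, with $|\cdot|$ the Euclidean norm.

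\textbf{Summing the tail.} It then remains to bound $\sum_{\bR\in L\Lat\setminus\{\bnull\}}\re^{-Y|\bR|}$. Writing $\bR=L\bS$ with $\bS\in\Lat\setminus\{\bnull\}$, and using the norm equivalence $|\bS|\ge c_0|\bm|$ between $\bS$ and its integer coordinate vector $\bm\in\Z^d$ in a fixed basis of $\Lat$ (with $c_0>0$ depending only on the lattice), this is dominated by $\sum_{\bm\in\Z^d\setminus\{\bnull\}}\re^{-a|\bm|}$ with $a:=c_0YL$. Since $|\bm|\ge1$ one factors out $\re^{-a/2}$, and the remaining sum $\sum_{\bm}\re^{-(a/2)|\bm|}$ is controlled, via $|\bm|_2\ge d^{-1/2}|\bm|_1$ and the explicit one-dimensional identity $\sum_{m\in\Z}\re^{-b|m|}=\coth(b/2)\le1+2/b$, by $C_d\max(1,a^{-d})$. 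Because $L\ge1$, one concludes with $\eta:=c_0/4$ (say): when $a\ge1$ the quantity $Y^d\re^{-a/2}$ is bounded uniformly in $(Y,L)$, while when $a\le1$ the residual $\re^{-a/2}$ is absorbed into $\re^{-\eta YL}$ and $a^{-d}\le c_0^{-d}Y^{-d}$. I expect the main obstacle to be the bookkeeping in this last step — producing a single $\eta>0$ and a clean $Y^{-d}$ prefactor valid simultaneously for small and large $YL$ — together with the (routine but not entirely trivial) verification that the $\RLat$-periodicity of $F$ extends holomorphically to the strip, which is what makes the contour shift of the second step legitimate.
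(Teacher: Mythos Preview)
Your proof is correct and follows the standard Fourier--aliasing argument (Poisson summation on the discrete grid, exponential decay of Fourier coefficients via contour shifting in the strip, then summing the tail), which is precisely the route the paper has in mind: the paper does not spell out a proof but refers to \cite[Lemma~5.1]{Gontier2016_M2AN}, where this same argument is carried out. Your handling of the bookkeeping in the final step---splitting on $a=c_0YL\gtrless 1$ to extract both the $\re^{-\eta YL}$ factor and the $Y^{-d}$ prefactor---is clean and correct.
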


We see that\REV{, for a fixed value of $\sigma$}, the greater the region of analyticity of the integrand,
the faster the convergence as $L\to\infty$. We distinguish here the
different smearing functions: the Fermi-Dirac function
$f_{\rm FD}^\sigma$ has poles at $(2\Z+1)\pi\sigma\ri$ and displays
exponential convergence, while the Gaussian-type smearing functions
are entire, leading to super-exponential convergence. We collect in
Lemmas~\ref{lem:nearRealLine}-\ref{lem:farRealLine} estimates on the
analytic behavior of the integrands of interest, from which the
results in this section proceed.

\REV{We would like to emphasize at this point that, if the value of $\sigma$ is not fixed but depends on $L$, obtaining rates of convergence becomes a more subtle and intricate task. 
We address this issue in more details in Remark~\ref{rem:sigmaadapt}.}

\subsubsection{Error for the integrated density of states}

 \begin{lemma}[Convergence of the integrated density of states] \label{lem:NsigmaL} It holds that \\
 $\bullet$ If $f^1$ is any of the smearing functions
 (\ref{smearing_function_1}-\ref{smearing_function_1}$'''$), there
 exists $C \in \R_{+}$ and $\eta > 0$ such that for all $0 < \sigma \le \sigma_0$ and all $L \in \N^*$,
 \begin{equation} \label{eq:expCVN}
 \max_{\varepsilon \in [\varepsilon_F - \delta_0, \varepsilon_F +
 \delta_0]} \left|\cN^{\sigma,L} (\varepsilon) - \cN^{\sigma} (\varepsilon) \right| \leq C \sigma^{-(d+1)}\re^{-\eta \sigma L}.
 \end{equation}
 $\bullet$ If $f^1$ is a Gaussian-type smearing function (\ref{smearing_function_1}'-\ref{smearing_function_1}$'''$), then there exists $C' \in \R_+$ and $\eta' > 0$ such that, for all $0 < \sigma \le \sigma_0$ and all $L \in \N^*$, such that $\sigma^2 L \ge 4$, it holds that
 \begin{equation} \label{eq:superExpCVN}
 \max_{\varepsilon \in [\varepsilon_F - \delta_0, \varepsilon_F +
 \delta_0]} \left|\cN^{\sigma,L} (\varepsilon) - \cN^{\sigma} (\varepsilon) \right| \leq C' \sigma^{-\tfrac{5d}3} L^{-\tfrac d3} \re^{- \eta' \sigma^{2/3} L^{4/3}}.
 \end{equation}
 \end{lemma}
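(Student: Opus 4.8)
The plan is to realize $\cN^\sigma(\varepsilon)$ and $\cN^{\sigma,L}(\varepsilon)$ as, respectively, a Brillouin-zone average and its $L^d$-point Riemann sum of one and the same fixed $\RLat$-periodic function, and then to apply the quadrature estimate of Lemma~\ref{lem:analyticity_implies_exp_decay}. Concretely, set $F_\sigma(\bk,\varepsilon) := \sum_{n \in \N^*} f^\sigma(\varepsilon_{n\bk}-\varepsilon) = \Tr_{L^2_\per}\big[ f^\sigma(H_\bk - \varepsilon) \big]$. Since $f^1 \in \cS(\R)$ decays faster than any polynomial and, by \eqref{eq:comparison_eigenvalues}, $\varepsilon_{n\bk}\to+\infty$ at rate $n^{2/d}$ uniformly in $\bk$, this series converges absolutely and locally uniformly; hence $\cN^\sigma(\varepsilon) = \fint_\BZ F_\sigma(\bk,\varepsilon)\,\rd\bk$ and $\cN^{\sigma,L}(\varepsilon) = L^{-d}\sum_{\bk\in\BZ_L}F_\sigma(\bk,\varepsilon)$, so that for each fixed $\varepsilon$ the quantity to bound is exactly the quadrature error of $\bk\mapsto F_\sigma(\bk,\varepsilon)$ on the grid $\BZ_L$. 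It will suffice to bound it uniformly over the compact set $\varepsilon\in[\varepsilon_F-\delta_0,\varepsilon_F+\delta_0]$.

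Next I would invoke the complex-analytic estimates of the Appendix, recorded in Lemmas~\ref{lem:nearRealLine}--\ref{lem:farRealLine}: the family $\bk\mapsto H_\bk$ extends to an entire family of closed operators with compact resolvent, and — separating the finitely many bands whose eigenvalue lies within $O(\sigma)$ of $\varepsilon$ from the rapidly decaying tail — the map $\bk\mapsto F_\sigma(\bk,\varepsilon)$ extends to an $\RLat$-periodic holomorphic function on the complex strip $S_Y$, with the following control, uniform in $\varepsilon$ on the interval above. For every smearing function in the list, this holds for $0<Y\le c\sigma$ — the ceiling on $Y$ being dictated by the Matsubara poles of $f^\sigma_{\rm FD}$ at $(2\Z+1)\ri\pi\sigma$ — with a bound of the form $\max_{\bz\in S_Y}|F_\sigma(\bz,\varepsilon)|\le C\sigma^{-1}$; and, for the Gaussian-type functions, which are entire, the extension is holomorphic on $S_Y$ for \emph{every} $Y>0$, at the price of an estimate of the form $\max_{\bz\in S_Y}|F_\sigma(\bz,\varepsilon)|\le P(\sigma^{-1}Y)\,\re^{Y^4/(c'\sigma^2)}$ for some polynomially-growing $P$, the $\re^{Y^4/\sigma^2}$ reflecting that the $n$-th band acquires an imaginary part $\sim n^{1/d}Y$ while $|1-\erf(w/\sigma)|$ grows like $\re^{(\Im w)^2/\sigma^2}$.

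Granting these two estimates, \eqref{eq:expCVN} follows immediately by applying Lemma~\ref{lem:analyticity_implies_exp_decay} to $F_\sigma(\cdot,\varepsilon)$ with the fixed choice $Y=c\sigma$: it gives, uniformly in $\varepsilon$, $|\cN^{\sigma,L}(\varepsilon)-\cN^\sigma(\varepsilon)| \le C\big(\max_{S_Y}|F_\sigma(\cdot,\varepsilon)|\big)Y^{-d}\re^{-\eta YL} \le C'\sigma^{-(d+1)}\re^{-\eta c\sigma L}$. For \eqref{eq:superExpCVN} (Gaussian-type only), one keeps $Y$ free and uses the second estimate, obtaining a bound proportional to $P(\sigma^{-1}Y)\,Y^{-d}\,\exp\!\big(\Phi(Y)\big)$ with $\Phi(Y):=Y^4/(c'\sigma^2)-\eta YL$. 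The function $\Phi$ is minimized at $Y_\star=(c'\eta/4)^{1/3}\sigma^{2/3}L^{1/3}$, where $\Phi(Y_\star)=-\kappa\,\sigma^{2/3}L^{4/3}$ for an explicit $\kappa>0$; the hypothesis $\sigma^2L\ge4$ guarantees that $Y_\star$ is large (in particular $Y_\star\ge c\sigma$, so the wide-strip estimate is the relevant one and $\sigma^{-1}Y_\star\gtrsim1$). Substituting $Y=Y_\star$ into the algebraic prefactor $P(\sigma^{-1}Y)\,Y^{-d}$ and collecting powers of $\sigma$ and $L$ produces the announced factor $\sigma^{-5d/3}L^{-d/3}$, and taking the maximum over the $\varepsilon$-interval throughout completes the proof.

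The one genuinely delicate ingredient is the Appendix input, i.e.\ the holomorphic extension of $\bk\mapsto\Tr_{L^2_\per}[f^\sigma(H_\bk-\varepsilon)]$ together with the quantitative strip bounds: because the $\bk$-dependence of $H_\bk$ passes through the unbounded operator $-\ri\nabla$, a complex shift of $\bk$ is an unbounded perturbation, the high bands are displaced far into the complex plane, and one must carefully split off the $O(1)$ bands near $\varepsilon$ — handled by holomorphic functional calculus on a fixed finite-dimensional spectral subspace, where the location of the poles of $f^\sigma_{\rm FD}$ forces the ceiling $Y\lesssim\sigma$ — from the tail, which is absorbed using the rapid decay of $f^1$ and the Weyl-type bound \eqref{eq:comparison_eigenvalues}. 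Once those estimates are available (this is the purpose of Lemmas~\ref{lem:nearRealLine}--\ref{lem:farRealLine}), the present lemma reduces, as above, to Lemma~\ref{lem:analyticity_implies_exp_decay} plus, for the Gaussian case, the elementary one-variable optimization in $Y$ carried out above.
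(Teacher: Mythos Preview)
Your proof is correct and follows essentially the same route as the paper: write $\cN^{\sigma}(\varepsilon)-\cN^{\sigma,L}(\varepsilon)$ as the quadrature error for the periodic integrand $F_{\varepsilon,\sigma}^{\cN}(\bk)=\Tr_{L^2_\per}[f^\sigma(H_\bk-\varepsilon)]$, invoke the strip estimates of Lemmas~\ref{lem:nearRealLine}--\ref{lem:farRealLine}, apply Lemma~\ref{lem:analyticity_implies_exp_decay}, and in the Gaussian case optimize over the strip width to find $Y_\star\propto(\sigma^2 L)^{1/3}$. One minor remark: your informal sketch of how the Appendix lemmas are proved (splitting off finitely many bands near $\varepsilon$ and controlling the tail via Weyl asymptotics) is a plausible strategy but not the one the paper uses --- the paper instead represents $F_{\varepsilon,\sigma}^{\cN}$ by a contour integral involving $(H_\bz+\Sigma)^{-2}(\lambda-H_\bz)^{-1}$ and controls the resolvent by perturbing from $H_\bk$ (near the real axis) or from $-\tfrac12\Delta_\bz$ (far from it); since you explicitly treat those lemmas as black boxes, this does not affect the argument here.
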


\begin{proof}
We want to compare $\cN^{\sigma,L}(\varepsilon)$ with $\cN^\sigma(\varepsilon)$. This is
exactly the framework of Lemma
\ref{lem:analyticity_implies_exp_decay}, with integrand
\begin{align*}
F_{\varepsilon,\sigma}^{\cN}(\bk) := \sum_{n \in \N^*} f^{\sigma}(\varepsilon_{n\bk} - \varepsilon)
&= \Tr_{L^2_\per} \left[ f^{\sigma} \left( H_\bk - \varepsilon \right) \right].
\end{align*}
In Appendix~\ref{sec:appendixFanalytic}, we study the analytic property of such functions. From Lemma~\ref{lem:nearRealLine}, there exists $C \in \R_+$ and $Y > 0$ such that,  for all $\varepsilon \in [\varepsilon_F - \delta_0, \varepsilon_F + \delta_0]$ and all $0 < \sigma \le \sigma_0$, the map $F_{\varepsilon,\sigma}^{\cN}$ admits an analytic continuation on $S_{\sigma Y}$, and it holds that
\begin{equation*}
\sup_{\bz \in S_{\sigma Y}} \left| F_{\varepsilon,\sigma}^{\cN}(\bz) \right| \le C \sigma^{-1}.
\end{equation*}
Together with Lemma~\ref{lem:analyticity_implies_exp_decay}, we deduce that there exists $C, C' \in \R^+$ and $\eta' > 0$ such that
\[
\left|\cN^{\sigma,L} (\varepsilon) - \cN^{\sigma} (\varepsilon) \right| \leq C \sigma^{-1} (\sigma Y)^{-d} \re^{-\eta \sigma Y L} \leq C' \sigma^{-(d+1)}  \re^{-\eta' \sigma L}.
\]
This proves the first part~\eqref{eq:expCVN}. For the second part, we
use Lemma~\ref{lem:farRealLine}, and get in a similar way that if
$f^1$ is a Gaussian-type smearing function, then there exists $C \in \R^+$ and $\eta > 0$ such that for all $Y \ge 1$, all $\varepsilon \in [\varepsilon_F - \delta_0, \varepsilon_F + \delta_0]$ and all $0 < \sigma \le \sigma_0$,
\[
\left|\cN^{\sigma,L} (\varepsilon) - \cN^{\sigma} (\varepsilon) \right| \leq C (\sigma Y)^{-d} \re^{\eta \left(  \tfrac{Y^4}{\sigma^{2}} - YL \right)}.
\]
Taking $Y = Y(\sigma,L) = \left( \tfrac14 \sigma^2 L \right)^{1/3}$ leads to the result. \REV{The condition $\sigma^2 L \ge 4$ comes from the fact that we need $Y \ge 1$ for this result to be valid.}
\end{proof}

\subsubsection{Error for the Fermi level, the total energy, and the density}
We now turn to the Fermi energy, the total energy, and the density. As
above, $\cN^{\sigma,L}$ is a continuous function that satisfies
$\cN^{\sigma,L}(-\infty) = 0$ and $\cN^{\sigma,L}(+\infty) = +\infty$,
hence there exists $\varepsilon_F^{\sigma,L} \in \R$ so that
$\cN^{\sigma,L}(\varepsilon_F^{\sigma,L}) = N$. As $\cN^{\sigma,L}$ is
not necessarily increasing, $\varepsilon_F^{\sigma,L}$ may be non
unique. However, since $\cN^{\sigma,L}$ is continuous, close to
$\cN^{\sigma}$ (in the sense of the lemma above), and
$\cN^{\sigma}(\varepsilon_{F}^{\sigma})=N$ with
$\partial_{\varepsilon} \cN^{\sigma}(\varepsilon_{F}^{\sigma}) > 0$,
it follows from the intermediary value theorem that there exists an
$\varepsilon_{F}^{\sigma,L}$ close to $\varepsilon_{F}^{\sigma}$ so
that $\cN^{\sigma,L}(\varepsilon_F^{\sigma,L}) = N$. It is this
$\varepsilon_{F}^{\sigma,L}$ that we assume to be chosen for the rest
of this paper.

\medskip


 \begin{lemma}[Convergence of the Fermi energy, the total energy, and the density] \label{lem:epsSigmaL} It holds that \\
 $\bullet$ If $f^1$ is any of the smearing functions in
 (\ref{smearing_function_1}-\ref{smearing_function_1}$'''$), there
 exists $C \in \R_{+}$ and $\eta > 0$ such that for all $0 < \sigma \le \sigma_0$ and all $L \in \N^*$,
 \begin{equation} \label{eq:expCVepsF}
  \left| \varepsilon_F^\sigma - \varepsilon_F^{\sigma, L} \right|
  + \left| E^\sigma - E^{\sigma, L} \right|
   + \left\| \rho^\sigma - \rho^{\sigma, L} \right\|_{H^{s+2}_{\per}}
    \leq C \sigma^{-(d+1)}\re^{-\eta \sigma L}.
 \end{equation}
 $\bullet$ If $f^1$ is a Gaussian-type smearing function
 (\ref{smearing_function_1}'-\ref{smearing_function_1}$'''$), there
 exists $C \in \R_+$ and $\eta > 0$ such that, for all
 $0 < \sigma \le \sigma_0$ and all $L \in \N^*$ \REV{with $\sigma^2 L \ge 4$}, it holds that
 \begin{equation} \label{eq:superExpCVepsF}
 \left| \varepsilon_F^\sigma - \varepsilon_F^{\sigma, L} \right|
  + \left| E^\sigma - E^{\sigma, L} \right|
   + \left\| \rho^\sigma - \rho^{\sigma, L} \right\|_{H^{s+2}_{\per}}
    \leq C \sigma^{-\tfrac{5d}3} L^{-\tfrac d3} \re^{- \eta \sigma^{2/3} L^{4/3}}.
 \end{equation}
 \end{lemma}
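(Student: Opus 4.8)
The argument mirrors that of Lemma~\ref{lem:NsigmaL}, which is used as the main engine (together with the analyticity estimates of Appendix~\ref{sec:appendixFanalytic}). Throughout I set
\[
\kappa = \kappa(\sigma,L) := \max_{\varepsilon\in[\varepsilon_F-\delta_0,\varepsilon_F+\delta_0]}\bigl|\cN^{\sigma,L}(\varepsilon)-\cN^{\sigma}(\varepsilon)\bigr|,
\]
which by Lemma~\ref{lem:NsigmaL} satisfies $\kappa\le C\sigma^{-(d+1)}\re^{-\eta\sigma L}$ for a general smearing function, and $\kappa\le C'\sigma^{-5d/3}L^{-d/3}\re^{-\eta'\sigma^{2/3}L^{4/3}}$ (when $\sigma^2L\ge4$) for a Gaussian-type one. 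Every bound obtained below will be of the form $C\kappa$ or $C\|W\|_{H^{-(s+2)}_\per}\kappa$, which is precisely what \eqref{eq:expCVepsF}--\eqref{eq:superExpCVepsF} assert.

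\textbf{Step 1: the Fermi level.} By Lemma~\ref{lem:Nsigma} and the implicit-function argument of Lemma~\ref{lem:uniqueFL}, there exist $\sigma_1>0$, $r>0$ and $c>0$ such that for all $0<\sigma\le\sigma_1$ one has $[\varepsilon_F^\sigma-r,\varepsilon_F^\sigma+r]\subset(\varepsilon_F-\delta_0,\varepsilon_F+\delta_0)$ and $\partial_\varepsilon\cN^{\sigma}\ge c$ on that interval, so that $\cN^{\sigma}(\varepsilon_F^\sigma+r)\ge N+cr$ and $\cN^{\sigma}(\varepsilon_F^\sigma-r)\le N-cr$. If $\kappa<cr$, the intermediate value theorem produces a root of $\cN^{\sigma,L}=N$ in $(\varepsilon_F^\sigma-r,\varepsilon_F^\sigma+r)$ — this is exactly the $\varepsilon_F^{\sigma,L}$ retained in the discussion preceding the lemma — and
\[
c\,\bigl|\varepsilon_F^{\sigma,L}-\varepsilon_F^\sigma\bigr|\le\bigl|\cN^{\sigma}(\varepsilon_F^{\sigma,L})-N\bigr|=\bigl|\cN^{\sigma}(\varepsilon_F^{\sigma,L})-\cN^{\sigma,L}(\varepsilon_F^{\sigma,L})\bigr|\le\kappa .
\]
If $\kappa\ge cr$, the crude bound $|\varepsilon_F^{\sigma,L}-\varepsilon_F^\sigma|\le2\delta_0\le(2\delta_0/cr)\kappa$ suffices. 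In both cases $|\varepsilon_F^\sigma-\varepsilon_F^{\sigma,L}|\le C\kappa$, giving the Fermi-level term in \eqref{eq:expCVepsF}--\eqref{eq:superExpCVepsF}.

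\textbf{Step 2: energy and density.} Let $A$ be $E$, or — after testing $\rho$ against an arbitrary $W\in H^{-(s+2)}_\per$ — the quantity $A_W(\varepsilon):=\sum_n\fint_\BZ W_{n\bk}\1(\varepsilon_{n\bk}\le\varepsilon)\rd\bk$ with $W_{n\bk}:=\langle W,|u_{n\bk}|^2\rangle_{H^{-(s+2)}_\per,H^{s+2}_\per}$; in the latter case $\langle W,\rho^{\sigma}-\rho^{\sigma,L}\rangle=A_W^{\sigma}(\varepsilon_F^{\sigma})-A_W^{\sigma,L}(\varepsilon_F^{\sigma,L})$, where $A_W^{\sigma},A_W^{\sigma,L}$ denote the smeared integral and the associated Riemann sum. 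Write
\[
A^{\sigma}-A^{\sigma,L}=\bigl(A^{\sigma}(\varepsilon_F^{\sigma})-A^{\sigma}(\varepsilon_F^{\sigma,L})\bigr)+\bigl(A^{\sigma}(\varepsilon_F^{\sigma,L})-A^{\sigma,L}(\varepsilon_F^{\sigma,L})\bigr).
\]
For the first (Fermi-surface) term, Lemma~\ref{lem:propg} applied with $g=E(\cdot)$, resp. $g=A_W(\cdot)$ — smooth near $\varepsilon_F$ with all derivatives there bounded by $C$, resp. by $C\|W\|_{H^{-(s+2)}_\per}$, using the estimate $|W_{n\bk}|\le C\|W\|_{H^{-(s+2)}_\per}n^{4(s+2)/d}$ from the proof of Lemma~\ref{lem:smearingE} — shows that $\varepsilon\mapsto A^{\sigma}(\varepsilon)=\cM_{g,\delta^1}(\varepsilon,\sigma)$ has an $\varepsilon$-derivative that is continuous, hence bounded, on the compact set $[\varepsilon_F-\delta_0/2,\varepsilon_F+\delta_0/2]\times[0,\sigma_1]$, by $C$, resp. $C\|W\|_{H^{-(s+2)}_\per}$. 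Since $\varepsilon_F^{\sigma,L}$ lies in that interval once $\kappa$ is small (and otherwise the right-hand sides of \eqref{eq:expCVepsF}--\eqref{eq:superExpCVepsF} are bounded below by a positive constant, while $|\varepsilon_F^\sigma-\varepsilon_F^{\sigma,L}|$, $|E^\sigma-E^{\sigma,L}|$ and $\|\rho^\sigma-\rho^{\sigma,L}\|_{H^{s+2}_\per}$ are bounded above by a constant uniformly in $0<\sigma\le\sigma_0$ and $L$, so there is nothing to prove), the mean value theorem and Step~1 give $|A^{\sigma}(\varepsilon_F^{\sigma})-A^{\sigma}(\varepsilon_F^{\sigma,L})|\le C\kappa$, resp. $\le C\|W\|_{H^{-(s+2)}_\per}\kappa$. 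The second term equals $\fint_\BZ F(\bk)\rd\bk-L^{-d}\sum_{\bk\in\BZ_L}F(\bk)$ with $F=F^{A}_{\varepsilon_F^{\sigma,L},\sigma}$ the integrand attached to $A$: for the energy, $F^{E}_{\varepsilon,\sigma}(\bk)=\Tr_{L^2_\per}[H_\bk f^{\sigma}(H_\bk-\varepsilon)]=\varepsilon\,F^{\cN}_{\varepsilon,\sigma}(\bk)+\Tr_{L^2_\per}[(H_\bk-\varepsilon)f^{\sigma}(H_\bk-\varepsilon)]$, and for the density $F(\bk)=\sum_n W_{n\bk}f^{\sigma}(\varepsilon_{n\bk}-\varepsilon)$ is the pairing of $W$ with the density of $f^{\sigma}(H_\bk-\varepsilon)$. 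As in the proof of Lemma~\ref{lem:NsigmaL}, the estimates of Appendix~\ref{sec:appendixFanalytic} (Lemmas~\ref{lem:nearRealLine}--\ref{lem:farRealLine}) apply to these integrands — inserting $H_\bk$, replacing $f^{\sigma}$ by $x\mapsto xf^{\sigma}(x)$, or testing against $W$ only costs polynomial-in-$\sigma$, resp. linear-in-$\|W\|_{H^{-(s+2)}_\per}$, factors — so $F$ extends analytically to $S_{\sigma Y}$ with $\sup_{S_{\sigma Y}}|F|\le C\sigma^{-1}$ (resp. $C\|W\|_{H^{-(s+2)}_\per}\sigma^{-1}$) in the general case, and to $\CC^d$ with a Gaussian bound in the Gaussian-type case, uniformly for $\varepsilon_F^{\sigma,L}\in[\varepsilon_F-\delta_0,\varepsilon_F+\delta_0]$. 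Lemma~\ref{lem:analyticity_implies_exp_decay} then bounds this term by $C\sigma^{-(d+1)}\re^{-\eta\sigma L}$ in general and, after the optimisation $Y=(\sigma^2L/4)^{1/3}$ of Lemma~\ref{lem:NsigmaL}, by the super-exponential quantity for Gaussian-type smearing (the regime $\sigma^2L<4$ being covered by the general bound). Summing the two contributions, and for the density taking the supremum over $\|W\|_{H^{-(s+2)}_\per}=1$, yields \eqref{eq:expCVepsF} and \eqref{eq:superExpCVepsF}.

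\textbf{Main obstacle.} The only non-soft ingredient is the input from Appendix~\ref{sec:appendixFanalytic}: one must check that the energy integrand $\Tr_{L^2_\per}[H_\bk f^{\sigma}(H_\bk-\varepsilon)]$ and the density integrand $\langle W,\rho_{f^{\sigma}(H_\bk-\varepsilon)}\rangle$ extend analytically to the same strips $S_{\sigma Y}$ (resp. to $\CC^d$) as $F^{\cN}_{\varepsilon,\sigma}$, with sup-norm controlled by $\sigma^{-1}$ (resp. $\|W\|_{H^{-(s+2)}_\per}\sigma^{-1}$) up to harmless polynomial factors. This is where the contrast between Fermi--Dirac smearing (poles at the Matsubara frequencies, hence a strip of half-width $\sim\sigma$) and Gaussian-type smearing (entire, hence the $Y^4/\sigma^2$ trade-off) enters, and it is precisely the content of Lemmas~\ref{lem:nearRealLine}--\ref{lem:farRealLine}. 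Everything else is a perturbation argument resting on Lemma~\ref{lem:NsigmaL}, the mean value theorem, and the uniform boundedness near $(\varepsilon_F,0)$ of the $\varepsilon$-derivatives of the smeared quantities.
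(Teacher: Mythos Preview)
Your proof is correct and follows essentially the same approach as the paper: the Fermi level is controlled via the intermediate value theorem and the lower bound on $\partial_\varepsilon\cN^\sigma$ near $\varepsilon_F^\sigma$, and the energy and density are handled by the same two-term splitting $A^\sigma(\varepsilon_F^\sigma)-A^\sigma(\varepsilon_F^{\sigma,L})$ plus $A^\sigma(\varepsilon_F^{\sigma,L})-A^{\sigma,L}(\varepsilon_F^{\sigma,L})$, bounding the first by the mean value theorem and Step~1 and the second by Lemma~\ref{lem:analyticity_implies_exp_decay} combined with the appendix estimates. You have simply made explicit several details (the large-$\kappa$ regime, the duality argument for $\rho$) that the paper leaves to the reader.
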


\begin{proof}
  The Fermi level is estimated as outlined above. For the energy (the
  proof is similar for the density), we have
\begin{align} \label{eq:Esigma-EsigmaL}
\left| E^\sigma - E^{\sigma, L} \right| \le \left| E^\sigma(\varepsilon_F^\sigma) - E^{\sigma}(\varepsilon_F^{\sigma,L}) \right| + \left| E^{\sigma}(\varepsilon_F^{\sigma,L}) - E^{\sigma, L}(\varepsilon_{F}^{\sigma, L} ) \right|.
\end{align}
The second term of~\eqref{eq:Esigma-EsigmaL} is exactly in the scope of Lemma~\ref{lem:analyticity_implies_exp_decay} when we consider the function
\[
F_{\sigma}^{E}(\bk) := \sum_{n \in \N^*} \varepsilon_{n\bk} f^{\sigma}(\varepsilon_{n\bk} - \varepsilon^{\sigma, L})
= \Tr_{L^2_\per} \left[ H_\bk f^{\sigma} \left( H_\bk - \varepsilon^{\sigma, L} \right) \right].
\]
Following the lines of Lemma~\ref{lem:NsigmaL} together with
Lemmas~\ref{lem:nearRealLine}-\ref{lem:farRealLine}, we see that the
first term is exponentially (resp. superexponentially) small. The
first term of~\eqref{eq:Esigma-EsigmaL} can be evaluated by noticing that
\[
	\left| E^\sigma(\varepsilon_F^\sigma) - E^{\sigma}(\varepsilon_F^{\sigma,L}) \right| \le \left( \max_{\varepsilon \in [\varepsilon_F - \delta_0/2, \varepsilon_F + \delta_0/2]} \left| \partial_{\varepsilon} E^\sigma \right| \right)\left| \eps_F^\sigma - \eps_F^{\sigma,L} \right|.
\]
The proof follows.
\end{proof}


\subsection{Total error}

We finally combine Lemma~\ref{lem:uniqueFL}, Lemma~\ref{lem:smearedE}, Lemma~\ref{lem:smearingE} and Lemma~\ref{lem:epsSigmaL}  to obtain our final result.
\begin{theorem}
\label{esigmaL-e}
Consider a smearing method of order $p \geq 1$. 
Under Assumption 1 and 2, there exist $C \in \R_{+}$ and $\eta > 0$ such that, for all $0 < \sigma \leq
\sigma_{0}$ and all $L \in \N^{*}$, it holds that
\begin{align}
  \label{eq:final_estimate}
  |\eps_{F}^{\sigma,L} - \eps_{F}| + |E^{\sigma,L} - E| + \|\rho^{\sigma,L} - \rho\|_{H^{s+2}_{\per}} &\leq C(\sigma^{p+1} + \sigma^{-(d+1)}\re^{-\eta \sigma L}).
\end{align}  
\end{theorem}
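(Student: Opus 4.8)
The plan is to obtain \eqref{eq:final_estimate} from a single triangle inequality that separates the total error into a \emph{smearing error} $A^{\sigma}-A$, controlled as $\sigma\to 0$, and a \emph{quadrature error} $A^{\sigma,L}-A^{\sigma}$, controlled as $L\to\infty$ at $\sigma$ fixed. Concretely, for each of the three quantities $A\in\{\eps_F,E,\rho\}$ (with the $H^{s+2}_\per$-norm in place of $|\cdot|$ in the case of $\rho$) I would write
\[
  \big|A^{\sigma,L}-A\big|\le \big|A^{\sigma,L}-A^{\sigma}\big|+\big|A^{\sigma}-A\big|
\]
and bound the two summands separately using results already established.

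For the quadrature error $A^{\sigma,L}-A^{\sigma}$ I would invoke the first bullet of Lemma~\ref{lem:epsSigmaL}, namely \eqref{eq:expCVepsF}, which is valid for \emph{every} smearing function considered in the paper (Fermi--Dirac as well as the Gaussian-type ones) and gives, for some $C,\eta>0$,
\[
  \big|\eps_F^{\sigma}-\eps_F^{\sigma,L}\big|+\big|E^{\sigma}-E^{\sigma,L}\big|+\big\|\rho^{\sigma}-\rho^{\sigma,L}\big\|_{H^{s+2}_\per}\le C\,\sigma^{-(d+1)}\re^{-\eta\sigma L}
\]
for all $0<\sigma\le\sigma_0$ and all $L\in\N^*$. (The sharper super-exponential bound \eqref{eq:superExpCVepsF} available for Gaussian-type smearing is not needed, since the theorem only claims the exponential rate.) For the smearing error $A^{\sigma}-A$ I would combine the three asymptotic-expansion lemmas: Lemma~\ref{lem:uniqueFL} yields $\eps_F^{\sigma}-\eps_F=O(\sigma^{p+1})$, Lemma~\ref{lem:smearedE} yields $E^{\sigma}-E=O(\sigma^{p+1})$, and Lemma~\ref{lem:smearingE} yields $\|\rho-\rho^{\sigma}\|_{H^{s+2}_\per}\le C\sigma^{p+1}$, all uniformly for $|\sigma|\le\sigma_1$. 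Adding the two estimates and replacing the finitely many constants by their maximum gives $\big|A^{\sigma,L}-A\big|\le C(\sigma^{p+1}+\sigma^{-(d+1)}\re^{-\eta\sigma L})$, which is precisely \eqref{eq:final_estimate}.

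Since all the substantive work is upstream, there is no real obstacle in this final step. The only points demanding a little care are: (i) checking that the various smallness thresholds $\sigma_0,\sigma_1,\delta_0,\delta_1$ occurring in the cited lemmas are mutually compatible, so that after possibly shrinking $\sigma_0$ a single $\sigma_0$ makes all hypotheses hold simultaneously; and (ii) noting that the constants in Lemmas~\ref{lem:uniqueFL}--\ref{lem:epsSigmaL} are already uniform in $\varepsilon$ near $\eps_F$ and in $L$, so no new uniformity argument is required. The genuinely hard analysis — the complex-analytic bounds on $\bk\mapsto\Tr_{L^2_\per}[f^{\sigma}(H_\bk-\varepsilon)]$ and its weighted variant on the strip $S_{\sigma Y}$ (Lemmas~\ref{lem:nearRealLine}--\ref{lem:farRealLine}), feeding Lemma~\ref{lem:epsSigmaL}, together with the Taylor-expansion mechanism behind Lemma~\ref{lem:propg} feeding the $\sigma^{p+1}$ bounds — has already been carried out, so the proof of Theorem~\ref{esigmaL-e} is essentially bookkeeping.
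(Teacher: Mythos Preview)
Your proposal is correct and matches the paper's approach exactly: the paper's entire proof is the one-line remark that Theorem~\ref{esigmaL-e} follows by combining Lemma~\ref{lem:uniqueFL}, Lemma~\ref{lem:smearedE}, Lemma~\ref{lem:smearingE} and Lemma~\ref{lem:epsSigmaL}, which is precisely the triangle-inequality decomposition you describe. Your additional remarks about reconciling the thresholds $\sigma_0,\sigma_1$ and about uniformity of constants are the only bookkeeping details, and they are routine.
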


\begin{remark}[Choosing $\sigma$ adaptively]\label{rem:sigmaadapt}
  In practice, only the parameter $L$ is relevant when considering CPU
  time. The numerical parameter $\sigma$ can be chosen freely with no
  extra numerical cost, and can be optimized with respect to $L$. For
  instance, the choice $\sigma \propto L^{-1}$ has been recommended
  for practical calculations in \cite{bjorkman2011adaptive}, based on
  a heuristic argument. According to our previous theorem, this is not
  enough: the right-hand side of \eqref{eq:final_estimate} does not
  tend to zero when $L \to \infty$ and $\sigma = C/L$. \REV{\old{Still, choosing
  the slightly different scaling $\sigma \propto L^{-1+\eps}$ for some
  $\eps > 0$ leads to a quasi-optimal decay of the error proportional
  to $L^{-(p+1)(1-\eps)}$.} 
  Still, choosing the slightly different scaling $\sigma \propto \log(L) L^{-1}$ leads to a decay of the error proportional to $L^{-(p+1)}$, up to log factors.
  } Our results therefore broadly support those
  of \cite{bjorkman2011adaptive}.
\end{remark}
\begin{remark}[Superexponential convergence]
    \REV{The choice $\sigma \propto \log(L)/L$ gives $\sigma^2 L \propto \log(L)^2/L$, which goes to $0$ as $L \to infty$. In particular, the condition $\sigma^2 L \ge 4$ is not satisfied for large $L$.
  \old{In order to balance the second term in \eqref{eq:final_estimate}, we
  need to choose $\sigma$ small. In particular, we are looking at the
  analytic property of the different functions of interest near the
  real axis.}} The super-exponential scaling result is therefore
  irrelevant for numerical purposes when we want to compute
  zero-temperature quantities.
\end{remark}

\section{Numerical results}\label{sec:resnum}

The aim of this section is to present some numerical results on toy
test cases to illustrate the convergence properties of the methods
analyzed in the previous sections. We also present some examples where
Assumption~1 or Assumption~2 are not valid. In these cases, we
numerically observe a degraded convergence rate.

\medskip

In Section~\ref{sec:interp}, some results about interpolation methods
are presented for different interpolation orders.
In Section~\ref{subsec:smearing}, numerical tests are
performed using some of the smearing methods described in
Section~\ref{sec:smearing-methods}.

\subsection{Interpolation methods}

We consider two-dimensional toy test cases, with $\BZ = (-1/2,1/2)^2$.
For a given $L\in \N^*$, we consider a uniform $L\times L$
discretization grid of $\BZ$ as described in Section~\ref{sec:interp},
and use B-splines of order $1$ and $2$ as interpolation operators. In
all computations in this section the numerical quadratures are
performed up to an error of $10^{-6}$, and so we display error curves
only above that threshold.

\medskip

\paragraph{\textbf{Case 1}}
Let us first consider a two-dimensional example where Assumptions~1
and~2 are satisfied (case 1 in the following). We consider one analytical band:
\begin{equation}\label{eq:oneband}
\forall \bk:=(k_1, k_2)\in \BZ, \quad \varepsilon_{1 \bk} = 3\cos(2\pi k_1)\cos(2\pi k_2) + \sin(4\pi k_1)\cos(4\pi k_2)
\end{equation}
represented on Figure~\ref{fig:contour}. The number of electrons per unit
cell is chosen to be equal to $N = 0.85$ so that the exact Fermi level
is approximately $\varepsilon_F \approx 1.7275$. We plot the error on
the energy $\left| E - E^{L,p,q} \right|$ and the Fermi level
$\left| \varepsilon_F - \varepsilon_F^{L,q} \right|$ as a function of
$L$ on Figure~\ref{fig:interpCos1}, for different interpolation
schemes (we recall that $p$ is the order used to interpolate the
integrand, and $q$ the order used to interpolate $\varepsilon_{n\bk}$
for the purposes of determining the integration region and Fermi level).

\begin{figure}[h!]
\includegraphics[width=0.5\textwidth]{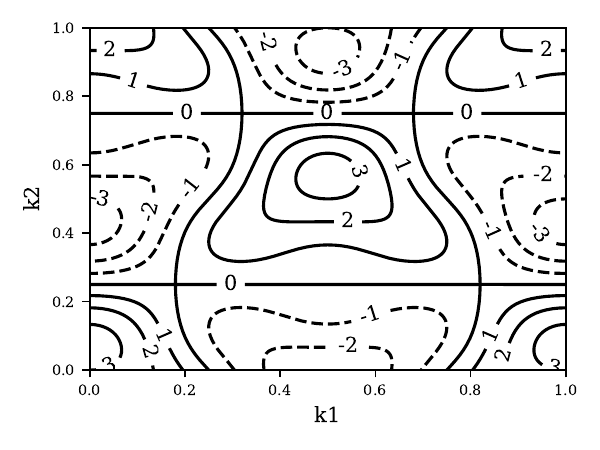}
\caption{Isolines of the band structure \eqref{eq:oneband} used in
  cases 1 and 2 (Fermi level $\approx 1.7275$ and $0$ respectively).}
\label{fig:contour}
\end{figure}

\begin{figure}[h!]
\begin{tabular}{cc}
\includegraphics[width=0.5\textwidth]{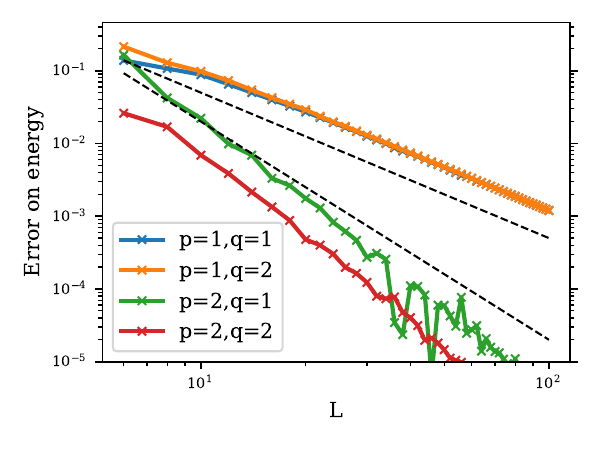} &
 \includegraphics[width=0.5\textwidth]{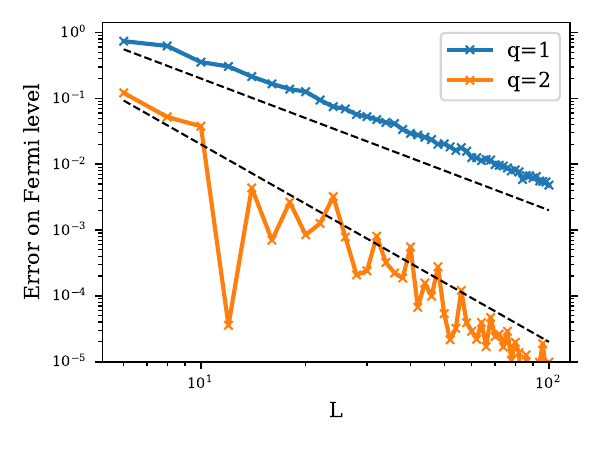}\\ 
\end{tabular}
\caption{Case 1: errors made on the energy (left) and the Fermi level
  (right). The two dashed lines are proportional to $L^{-2}$ and
  $L^{-3}$ respectively. The $p=1$, $q=1$ and $p=1$, $q=2$ curves are almost identical
  in the left plot.}
\label{fig:interpCos1}
\end{figure}

We see that the energy for the methods with $p=1$ converge converges
as $1/L^{2}$, irrespective of $q$, and that the Fermi level converges
as $1/L^{2}$ for $q=1$, as expected from our estimates. For the higher-order
methods, the convergence is more erratic, and it is difficult to
determine the order precisely. However, it can be seen that the $p=2$, $q=2$
method only marginally improves the error on the energy compared to
the $p=2$, $q=1$ method, as expected from our estimates.

\medskip
 
\paragraph{\textbf{Cases 2 and 3} }
We now consider two other two-dimensional examples where either
Assumption~1 or Assumption~2 is violated.

In the test violating Assumption~\REV{2\old{1}} ({case~2} in the following), only
one band is considered, with the same analytic expression
(\ref{eq:oneband}) as in case 1. However, the number of electrons is
now chosen to be equal to $N=0.5$ so that the exact Fermi level is
equal to $\varepsilon_F = 0$. There are saddle points of the function
$\varepsilon_{1\bk}$ at level
$\varepsilon_F = 0$, as can be seen in
Figure~\ref{fig:contour}. This produces a van Hove singularity in the
density of states at the Fermi level, violating Assumption~1. Note
however that the singularity for a saddle point is relatively mild in
2D: $\cD(\varepsilon)$ diverges logarithmically near
$\varepsilon_{F}$.
 
The test case violating Assumption~\REV{1\old{2}} ({case~3} in the following) is
the standard tight-binding model of graphene \cite{neto2009electronic}, where band crossings
occur on the Fermi surface.

We denote for all $\bk = (k_1,k_2)\in \BZ$
\begin{align*}
  c_{1}(\bk) = \frac{k_{1}+k_{2}}{3}, \quad c_{2}(\bk) = \frac{k_{1}-2k_{2}}3, \quad c_{3}(\bk) = \frac{-2k_{1}+k_{2}}3.
\end{align*}
For all $\bk\in \BZ$, we then define the $2\times 2$ Hermitian matrix $H(\bk)$ as follows
$$
H(\bk):= \left( \begin{array}{cc}
                 0 & \sum_{j=1}^3 \re^{-2  \pi \ri c_j(\bk)} \\
                 \sum_{j=1}^3 \re^{2  \pi\ri c_j(\bk)} & 0\\
                \end{array}\right).
$$
It can be checked that, while $H$ is not periodic, for all
$\bK \in \mathcal R^{*}$, $H(\bk+\bK)$ is unitarily equivalent to
$H(\bk)$, so that the eigenvalues of $H(\bk)$ are periodic on $\BZ$.
In this test case, the number of electrons per unit cell is chosen to
be equal to $N=1$ so that the exact Fermi level is
$\varepsilon_F = 0$. The two bands $\varepsilon_{1\bk}$ and
$\varepsilon_{2\bk}$ touch at the Fermi level at two non-equivalent
points of the Brillouin zone (the Dirac points $\bK$ and $\bK'$),
violating Assumption~2.

In Figure~\ref{fig:interppath} we plot the errors between the
exact and approximate energies $\left| E - E^{L,p,q} \right|$ as a
function of $L$ using the same interpolation schemes as described
above.

\begin{figure}[h!]

\begin{tabular}{cc}
\includegraphics[width=0.5\textwidth]{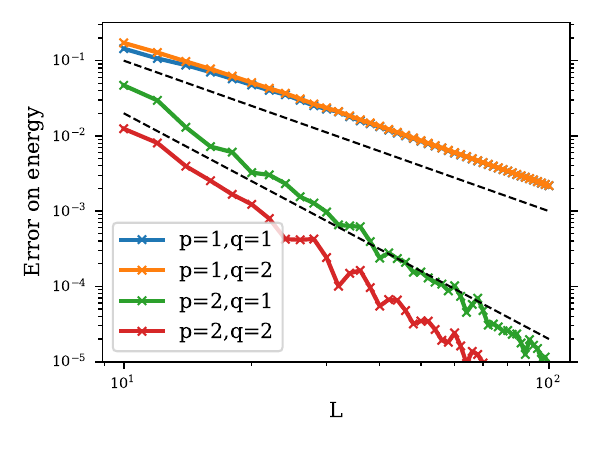} &
 \includegraphics[width=0.5\textwidth]{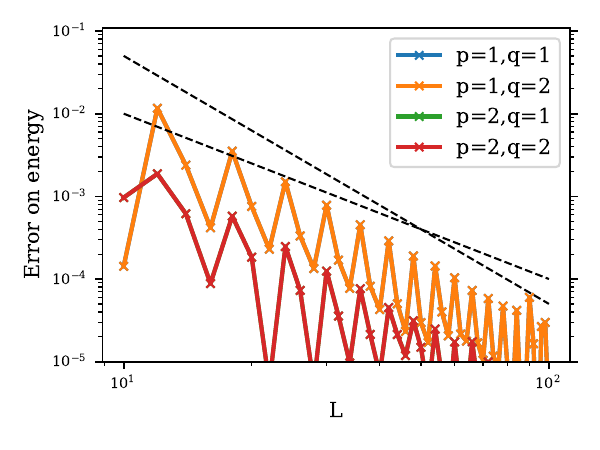}\\ 
\end{tabular}
\caption{Errors made on the energy for interpolation methods on cases
  2 (left) and 3 (right). The two dashed lines are proportional to $L^{-2}$ and
  $L^{-3}$ respectively. On the right plot, the results are
  independent of $q$.}
\label{fig:interppath}
 \end{figure}
 
 The different errors for case 2 seem to decay at a rate similar to
 that in case 1. We attribute this to the relatively mild singularity
 of this case. Case 3 however displays a very different behavior, the
 results depending on the position of the Dirac points $\bK$ and $\bK'$ on
 the grid and therefore displaying an oscillating pattern.

\subsection{Smearing methods}\label{subsec:smearing}

The aim of this section is to present numerical results for some
smearing methods presented and analyzed in
Section~\ref{sec:smearing-methods}, for the three two-dimensional test
cases presented in Section~\ref{sec:interp}. We consider here the
Gaussian (denoted by GA, of order $1$) and the first Methfessel-Paxton
method (denoted by MP, of order $3$) smearing schemes.

\medskip

Let us begin with the first test case, namely the one-band
model~(\ref{eq:oneband}) for which Assumptions~1 and~2 are satisfied.
The errors on the energy and on the Fermi level are plotted on Figure~\ref{fig:smea1} as a function of $L$ and for different values of $\sigma$. 

\begin{figure}[h!]
\begin{tabular}{cc}
 \includegraphics[width=.5\textwidth]{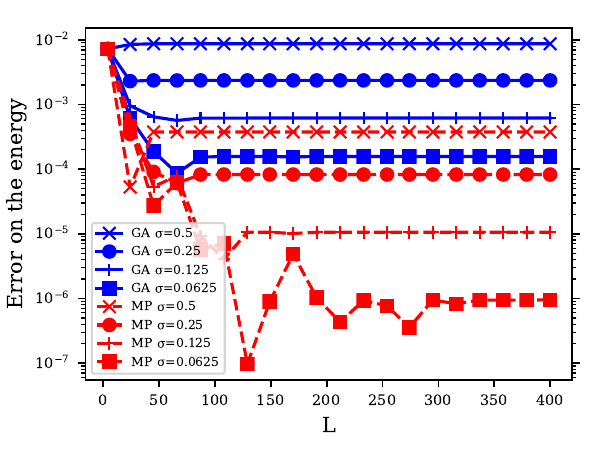}&
 \includegraphics[width=.5\textwidth]{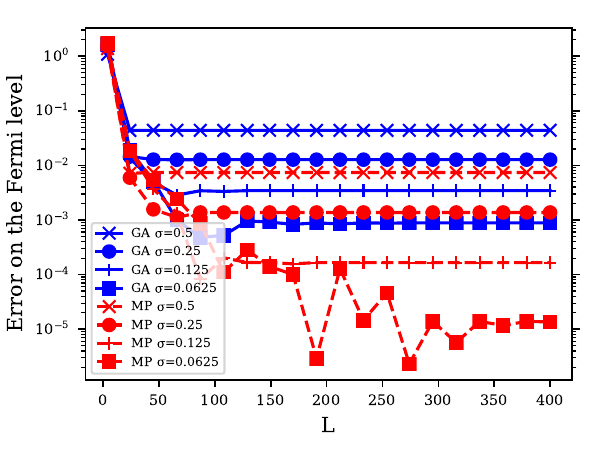}
\end{tabular}
\caption{
  Error on the energy $\left| E - E^{\sigma,L}\right|$ (left) and
  Fermi level $\left| \varepsilon_{F}
    - \varepsilon_{F}^{\sigma,L}\right|$ (right) for case 1 as a function of $L$ for different smearing schemes and different values of $\sigma$.}
\label{fig:smea1}
\end{figure}

From this we obtain the following conclusions:
\begin{itemize}
\item The error as $L\to\infty$ decreases as $\sigma$
  decreases. As $\sigma$ is reduced by a factor of $2$, the error
  $|E^{\sigma,\infty}-E|$ is reduced by a factor of about $4$ (for
  GA) and by a factor of about $8$ (for MP1), suggesting that the
  asymptotic regime is not yet reached for the MP1 method (we would
  expect a factor $16$ since the MP1 method is of order 3).
\item As $\sigma$ is reduced, so is the speed of convergence of
  $E^{\sigma,L}$ to $E^{\sigma,\infty}$. The number of points $L$
  required to achieve convergence of $E^{\sigma,L}$ to
  $E^{\sigma,\infty}$ scales approximately linearly as $1/\sigma$, as
  expected from the $\rme^{-\eta\sigma L}$ term in
  Theorem~\ref{esigmaL-e}.
\item On this toy example and in the parameter regimes
  considered here, the method that gives the lowest error for a given
  $L$ seems to be that with lowest smearing and highest order, giving
  the impression that smearing is not advantageous. This is of course
  not the case in more realistic examples with lower values of $L$,
  where there is a non-zero optimal smearing.
\end{itemize}

 \medskip

 We also present numerical results obtained on the two degenerate
 cases presented in Section~\ref{sec:interp} where Assumption~1 or
 Assumption~2 are violated. Errors $\left| E^{\sigma,L}-E\right|$ are
 plotted as a function of $L$ for cases
 1 and 2  in Figure~\ref{fig:smea1path}. In contrast to before, we see that the
 Methfessel-Paxton scheme is not able to achieve a higher order
 than the Gaussian smearing. This is because of the
 lack of regularity of the density of states at the Fermi level in
 these two cases.

 \begin{figure}[h!]
\begin{tabular}{cc}
\includegraphics[width=.5\textwidth]{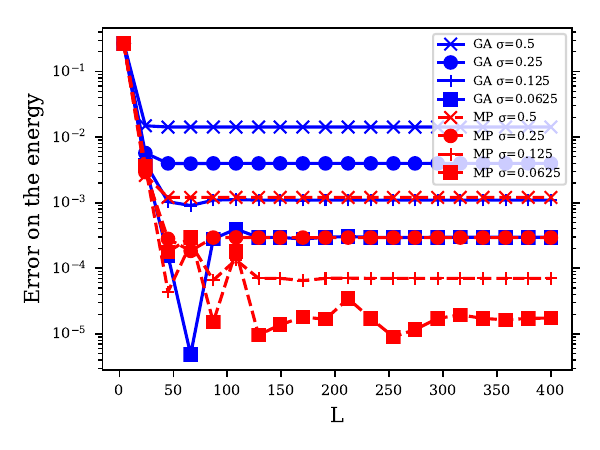}&
 \includegraphics[width=.5\textwidth]{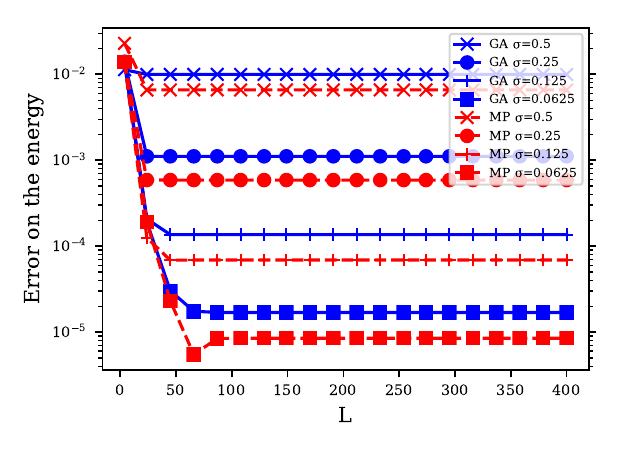}\\ 
\end{tabular}
\caption{Error on the energy $\left| E - E^{\sigma,L}\right|$ for cases 2 (left) and 3 (right) as a function of $L$ for different smearing schemes and different values of $\sigma$.}
\label{fig:smea1path}
 \end{figure}

\section{Conclusion}
We have presented an {\it a priori} error analysis of quadrature rules and smearing
methods for $\bk$-point integration in the Brillouin zone. Our
conclusions justify several non-obvious schemes, and give rigorous bounds allowing one to choose the smearing
parameter optimally.

\medskip

Our analysis is concerned with linear periodic Schr\"odinger operators; as such,
a number of extensions are necessary to covers the framework of density functional theory, which is the main
motivation underlying this work. The nonlinearity of the
Kohn-Sham equations is expected to give rise to difficulties, which can probably be addressed using the tools developed in~\cite{CCM2012}. Another source of 
error not considered here is the space (or plane-wave) discretization used in numerical simulations of periodic Schr\"odinger operators. This has a strong impact 
on our estimates, which rely on the fact that the map $\R^d \ni \bk \mapsto (\ri+H_{\bk})^{-1} \in \cB(L^2_\per)$ is smooth and pseudo-periodic. This cannot be ensured at the discrete level 
for a standard Galerkin discretization, see Remark \ref{rem:discretization}. We plan to explore all these issues in a forthcoming paper.

\medskip

In this work, we only considered simple ground state properties.
Several quantities of interest do not fit into this framework, such as
response functions, which involve integrals
over Fermi surfaces, derivatives with respect to $\bk$ of occupied Bloch states, or unoccupied Bloch states. The error
estimates in this case are expected to be substantially different, in
accordance to the common observation that these quantities
converge slowly in practice as functions of $L$.

\medskip

To establish our results, we have relied on Assumptions 1 and 2, which
mathematically define what is a ``simple'' metal (at least for the
properties we have considered). For such metals, the asymptotic
behavior is universal, and the scaling laws only depend on the
interpolation or smearing used. It would be interesting to explore
what happens in the case of a semimetal, or when a van Hove
singularity is present at (or close to) the Fermi surface.

\appendix

\section{Analytic properties of the integrand}
\label{sec:appendixFanalytic}
The goal of this appendix is to study the analytic properties of the functions $F_{\varepsilon,\sigma}^{\cN}$, $F_{\varepsilon,\sigma}^{E}$ and $F_{\varepsilon,\sigma}^{\rho, W}$ defined respectively by
\begin{align*}
F_{\varepsilon,\sigma}^{\cN}(\bk) & := \Tr_{L^2_\per} \left[ f^{\sigma} \left( H_\bk - \varepsilon \right) \right], \\
F_{\varepsilon,\sigma}^{E}(\bk) & := \Tr_{L^2_\per} \left[ H_\bk f^{\sigma} \left( H_\bk - \varepsilon \right) \right], \\
F_{\varepsilon,\sigma}^{\rho, W}(\bk) & := \Tr_{L^2_\per} \left[ W f^{\sigma} \left( H_\bk - \varepsilon \right) \right].
\end{align*}

We prove the following two results. The first one studies the analytic properties of these functions near the real line.
\begin{lemma}[Analyticity near the real line] \label{lem:nearRealLine}
Let $f^1$ be any of the smearing functions $\mbox{(\ref{smearing_function_1})-(\ref{smearing_function_1}$\,'''\!$).}$ Then, there exist $C \in \R_+$ and ${Y} > 0$ such that, for all $\varepsilon \in [\varepsilon_F - \delta_0, \varepsilon_F + \delta_0]$ and all $0 < \sigma \le \sigma_0$, the maps $F_{\varepsilon,\sigma}^{X}$ admits an analytic continuation on $S_{\sigma Y}$, and it holds that 
\begin{equation*}
  \sup_{\bz \in S_{\sigma Y}} \left| F_{\varepsilon,\sigma}^{\cN}(\bz) \right| + \sup_{\bz \in S_{\sigma Y}} \left| F_{\varepsilon,\sigma}^{E}(\bz) \right|  + \sup_{{\bz \in S_{\sigma Y}, \; \|W\|_{H^{-(s+2)}_\per=1}}} \left| F_{\varepsilon,\sigma}^{\rho, W}(\bz) \right|\le C \sigma^{-1}.
\end{equation*}
\end{lemma}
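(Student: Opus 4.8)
The plan is to represent each of the three functions via the holomorphic functional calculus. Fixing $\varepsilon$ and $\sigma$, I would write
\[
f^{\sigma}(H_{\bz}-\varepsilon)=\frac{1}{2\pi\ri}\oint_{\sC}f^{\sigma}(\zeta-\varepsilon)\,(\zeta-H_{\bz})^{-1}\,\rd\zeta
\]
for a positively oriented contour $\sC=\sC_{\varepsilon,\sigma}$ enclosing $\sigma(H_{\bz})$, and similarly with $H_{\bz}$ or $W$ inserted inside the trace. The fiber $H_{\bz}=\tfrac12(-\ri\nabla+\bz)^{2}+V$ depends holomorphically on $\bz\in\CC^{d}$, and $H_{\bz}=H_{\Re\bz}+W_{\Im\bz}$ with $W_{\by}:=\ri\,\by\cdot(-\ri\nabla+\Re\bz)-\tfrac12|\by|^{2}$; for $\bz\in S_{\sigma Y}$ this $W_{\Im\bz}$ is small relative to the self-adjoint fiber $H_{\Re\bz}$. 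The steps are then: (i) build $\sC$; (ii) obtain a uniform resolvent bound $\|(\zeta-H_{\bz})^{-1}\|\le C\sigma^{-1}$ along $\sC$ by expanding $(\zeta-H_{\bz})^{-1}=(\zeta-H_{\Re\bz})^{-1}\sum_{j\ge0}\bigl(W_{\Im\bz}(\zeta-H_{\Re\bz})^{-1}\bigr)^{j}$; (iii) insert resolvent powers to turn the contour integral into a trace-ideal bound; and (iv) deduce holomorphy of $\bz\mapsto F^{X}_{\varepsilon,\sigma}(\bz)$ from that of $\bz\mapsto(\zeta-H_{\bz})^{-1}$ together with dominated convergence.

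Building $\sC$ is the heart of the matter. For $\bz\in S_{\sigma Y}$ the eigenvalues of $H_{\bz}$ lie within $O\bigl(\sigma Y(1+|\Re\zeta|)^{1/2}\bigr)$ of the real spectrum $\sigma(H_{\Re\bz})$, by \eqref{eq:comparison_eigenvalues}; I would therefore take $\sC$ to run parallel to the real axis at height $h(\zeta):=\max\bigl(\kappa\sigma,\,C_{*}\sigma Y(1+|\Re\zeta|)^{1/2}\bigr)$, which is of order $\sigma$ for $\Re\zeta$ in any bounded set, and to close off to the left of $\Sigma:=\min\sigma(H)$. With $C_{*}$ large, $\sC$ encloses $\bigcup_{\bz\in S_{\sigma Y}}\sigma(H_{\bz})$ and stays at distance $\ge h/2$ from it. On $\sC$, using $(-\ri\nabla+\Re\bz)^{2}=2(H_{\Re\bz}-V)$ and the spectral theorem for the self-adjoint $H_{\Re\bz}$, one gets $\|(-\ri\nabla+\Re\bz)(\zeta-H_{\Re\bz})^{-1}\|\le C(1+|\Re\zeta|)^{1/2}/\dist(\zeta,\sigma(H_{\Re\bz}))$, hence $\|W_{\Im\bz}(\zeta-H_{\Re\bz})^{-1}\|\le\tfrac12$ once $C_{*}$ is large and $Y$ small, so the Neumann series converges and $\|(\zeta-H_{\bz})^{-1}\|\le C/h(\zeta)\le C/(\kappa\sigma)$ everywhere on $\sC$. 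For the Fermi--Dirac function the only singularities of $\zeta\mapsto f^{\sigma}(\zeta-\varepsilon)$ lie on $\{\Re\zeta=\varepsilon\}$ at imaginary parts in $(2\Z+1)\pi\sigma$; since $\varepsilon$ ranges over the bounded set $[\varepsilon_{F}-\delta_{0},\varepsilon_{F}+\delta_{0}]$, choosing $\kappa\in(0,\pi)$ and $Y$ small ensures $h(\zeta)<\pi\sigma$ at $\Re\zeta=\varepsilon$, so $\sC$ avoids all poles; the Gaussian-type functions are entire. Moreover $|f^{\sigma}(\zeta-\varepsilon)|=|f^{1}((\zeta-\varepsilon)/\sigma)|$ is bounded along $\sC$ (near $\Re\zeta=\varepsilon$ because $h/\sigma\in(0,\pi)$ there, and elsewhere because $(\Re\zeta-\varepsilon)/\sigma$ is large, so that $f^{1}$ is super-exponentially small for $\Re\zeta>\varepsilon$ and close to $1$ for $\Re\zeta<\varepsilon$).

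Combining these, $\|f^{\sigma}(H_{\bz}-\varepsilon)\|\le\frac{1}{2\pi}\int_{\sC}|f^{\sigma}(\zeta-\varepsilon)|\,\|(\zeta-H_{\bz})^{-1}\|\,|\rd\zeta|\le\frac{C}{2\pi\kappa\sigma}\int_{\sC}|f^{\sigma}(\zeta-\varepsilon)|\,|\rd\zeta|=O(\sigma^{-1})$, the last integral being $O(1)$ because $f^{\sigma}(\zeta-\varepsilon)$ is bounded along $\sC$, tends to $1$ only on the left tail (of bounded length before $\sC$ closes off near $\Sigma$), and decays super-exponentially on the right tail. To upgrade to the trace-norm bound needed for $F^{\cN}_{\varepsilon,\sigma}$ and $F^{E}_{\varepsilon,\sigma}$ (resp.\ the $\fS_{1,s+2}$-norm bound for $F^{\rho,W}_{\varepsilon,\sigma}$, whence the $H^{s+2}_{\per}$ bound on the density by the standard density estimate for operators in $\fS_{1,s+2}$), I would fix a real $\zeta_{*}<\Sigma$ and an integer $m$ with $2m/d>1$ (with $2(m-1)/d>1$ for $F^{E}$, and $m$ larger still for $F^{\rho,W}$), and write $f^{\sigma}(H_{\bz}-\varepsilon)=(H_{\bz}-\zeta_{*})^{-m}\bigl[\tfrac{1}{2\pi\ri}\oint_{\sC}(\zeta-\zeta_{*})^{m}f^{\sigma}(\zeta-\varepsilon)(\zeta-H_{\bz})^{-1}\,\rd\zeta\bigr]$; the bracket is still $O(\sigma^{-1})$ in operator norm, since $(\zeta-\zeta_{*})^{m}$ is $O(1)$ on the part of $\sC$ with $\Re\zeta$ bounded and harmless against the decay of $f^{\sigma}$ on the right tail, while $\|(H_{\bz}-\zeta_{*})^{-m}\|_{\fS_{1}}$ is bounded uniformly in $\bz\in S_{\sigma Y}$ via \eqref{eq:comparison_eigenvalues} and a singular-value perturbation estimate. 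The $H_{\bz}$ insertion for $F^{E}$ is absorbed into the polynomial factor $(\zeta+\varepsilon-\zeta_{*})^{m}$; the $W$ insertion together with the passage to $\fS_{1,s+2}$ requires commuting $(1-\Delta)^{(s+2)/2}$ past $(H_{\bz}-\zeta_{*})^{-m}$, handled exactly as in the elliptic-regularity bootstrap of Lemma~\ref{lem:coverBZ} using $V\in H^{s}_{\per}$. Finally, all these bounds being locally uniform in $\bz$ on the open strip, with $\bz\mapsto(\zeta-H_{\bz})^{-1}\in\sB$ holomorphic off the spectrum and $\Tr$ continuous on $\fS_{1}$, Morera's theorem gives the claimed analyticity.

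I expect the main obstacle to be steps (ii)--(iii): designing a single contour that simultaneously encloses the spectra of all $H_{\bz}$ with $\bz\in S_{\sigma Y}$, avoids the Fermi--Dirac poles, and delivers the uniform resolvent bound in the non-self-adjoint regime, and then carrying the trace-ideal estimates through for a non-self-adjoint $H_{\bz}$ — in particular the $\fS_{1,s+2}$ bound underlying $F^{\rho,W}_{\varepsilon,\sigma}$, where the Sobolev bookkeeping must be done with the same care as in the proof of Lemma~\ref{lem:coverBZ}.
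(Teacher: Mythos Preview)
Your proposal follows essentially the same route as the paper: a contour-integral representation of $f^{\sigma}(H_{\bz}-\varepsilon)$, a Neumann-series perturbation of the self-adjoint resolvent $(\zeta-H_{\Re\bz})^{-1}$ to control $(\zeta-H_{\bz})^{-1}$ uniformly by $C\sigma^{-1}$, and insertion of resolvent powers to land in trace class. The paper's execution differs only in two cosmetic choices. First, instead of your adaptive height $h(\zeta)=\max\bigl(\kappa\sigma,\,C_{*}\sigma Y(1+|\Re\zeta|)^{1/2}\bigr)$, the paper uses the explicit parabola $\sC=\{|\Im\lambda|^{2}=\sigma^{2}(1+\Re\lambda/\Sigma)\}$ with $\Sigma=\|V\|_{L^{\infty}}+1$ (after absorbing $\varepsilon$ into $V$), which has the same $\sqrt{\Re\lambda}$ asymptotics and passes through $\pm\ri\sigma$, thereby avoiding the Fermi--Dirac poles automatically; this makes the subsequent parametrised estimates on $\int_{\sC}|f^{1}(\lambda/\sigma)\lambda(\lambda+\Sigma)^{2}|\,|\rd\lambda|$ cleaner. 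Second, the paper fixes the regularisation power at $m=2$ (Hilbert--Schmidt squared suffices in $d\le3$) rather than a general $m$, and for the density uses $(H_{\bz}+\Sigma)^{-(s+4)/2}W(H_{\bz}+\Sigma)^{-(s+4)/2}$ symmetrically rather than your one-sided $\fS_{1,s+2}$ bookkeeping. One point you should make explicit is the verification that the contour integral actually agrees with $F_{\sigma}$ on the real line; the paper does this by a finite-rank truncation $H_{\bk}^{Q}$ of $H_{\bk}$, applying Cauchy's formula on a truncated contour $\sC_{M}$, and passing to the limit $M\to\infty$, $Q\to\infty$ by dominated convergence---your appeal to ``holomorphic functional calculus'' is correct for self-adjoint $H_{\bk}$ but deserves a line of justification since the contour is unbounded and the resolvent is not trace class.
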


Our second result studies the analytic properties on the entire complex plane.
\begin{lemma}[Analyticity on $\CC$] \label{lem:farRealLine}
  Let $f^1$ be one of the Gaussian-type smearing functions (\ref{smearing_function_1}')-(\ref{smearing_function_1}$\,'''\!$). Then, the maps $F_{\varepsilon,\sigma}^{X}$ are entire, and there exists $C \in \R_+$ and $\eta > 0$ such that, for all $Y \ge 1$, all $\varepsilon \in [\varepsilon_F - \delta_0, \varepsilon_F + \delta_0]$ and all $0 < \sigma \le \sigma_0$,
\begin{equation*}
\sup_{\bz \in S_{\sigma Y}} \left| F_{\varepsilon,\sigma}^{\cN}(\bz) \right| + \sup_{\bz \in S_{\sigma Y}} \left| F_{\varepsilon,\sigma}^{E}(\bz) \right| + \sup_{{\bz \in S_{\sigma Y}, \;\|W\|_{H^{-(s+2)}_\per} = 1}} \left| F_{\varepsilon,\sigma}^{\rho, W}(\bz) \right|\le C \re^{\eta \frac{Y^4}{\sigma^{2}}}.
\end{equation*}
\end{lemma}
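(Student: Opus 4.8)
Proof plan. The argument reuses the operator-theoretic setup from the proof of Lemma~\ref{lem:nearRealLine}; the only new input is that, for the Gaussian-type functions, $f^1$ is entire and decays super-exponentially off the real axis, which upgrades near-real control to control on all of $\CC^d$.

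First I would set up the spectral localization. From $H_\bk = -\tfrac12\Delta - \ri\bk\cdot\nabla + \tfrac{\bk^2}{2} + V$, the map $\bk\mapsto H_\bk$ extends to a holomorphic family of type (A) of closed operators on $L^2_\per$ with common domain $H^2_\per$ (and $\RLat$-pseudoperiodic), cf.\ \cite{ReedSimon4}, so $(z,\bk)\mapsto(z-H_\bk)^{-1}$ is jointly holomorphic wherever defined. By $\RLat$-periodicity of the eigenvalues we may take $\Re\bk$ in a fixed fundamental domain. Writing $H_\bk = H_{\Re\bk} + (\Im\bk)\cdot\nabla + c_\bk$ with $H_{\Re\bk}$ self-adjoint, $(\Im\bk)\cdot\nabla$ skew-adjoint and $c_\bk\in\CC$ scalar with $\Re c_\bk = -\tfrac12|\Im\bk|^2$, and using $\|\nabla\psi\|^2 \le C(1+\langle\psi,H_{\Re\bk}\psi\rangle)$, the numerical range of $H_\bk$ — hence its spectrum, $H_\bk$ having compact resolvent — lies in the parabolic region
\[
\cP_\bk := \bigl\{z\in\CC : \Re z \ge -C(1+|\Im\bk|^2),\ \ |\Im z| \le C|\Im\bk|\bigl(\sqrt{(\Re z)_+} + |\Im\bk| + 1\bigr)\bigr\}.
\]
Taking $\Gamma_\bk := \{z : \dist(z,\cP_\bk) = 1\}$ (whose complement contains the unbounded component of $\CC\setminus\overline{\cP_\bk}$, which lies in the resolvent set), one gets $\|(z-H_\bk)^{-1}\|_{\sB}\le 1$ on $\Gamma_\bk$, hence $(z-H_\bk)^{-1}\in\fS_2$ and $(z-H_\bk)^{-2}\in\fS_1$ since $d\le 3$, with $\fS_1$-norm bounded by a polynomial in $1+|\Re z|$ and $1+|\Im\bk|$ along $\Gamma_\bk$.

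Next I would produce the contour representation, which simultaneously yields entirety. Since $f^\sigma(\cdot-\varepsilon)$ is entire and, being Gaussian-type, decays super-exponentially on $\Gamma_\bk$ as $\Re z\to+\infty$ (the parabola $\Gamma_\bk$ is sub-linear, so it stays in the decay cone), holomorphic functional calculus and one integration by parts along $\Gamma_\bk$ give, in $\sB$,
\[
f^\sigma(H_\bk-\varepsilon) = \frac{1}{2\pi\ri}\int_{\Gamma_\bk} f^\sigma(z-\varepsilon)(z-H_\bk)^{-1}\,\rd z = \frac{1}{2\pi\ri}\int_{\Gamma_\bk} g_\bk(z)(z-H_\bk)^{-2}\,\rd z,
\]
where $g_\bk$ is the primitive of $f^\sigma(\cdot-\varepsilon)$ vanishing at an end of $\Gamma_\bk$ (which also decays super-exponentially). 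The last integrand is $\fS_1$-valued with absolutely convergent $\fS_1$-norm, so $F_{\varepsilon,\sigma}^{\cN}(\bk) = \Tr_{L^2_\per}[f^\sigma(H_\bk-\varepsilon)] = \frac{1}{2\pi\ri}\int_{\Gamma_\bk} g_\bk(z)\,\Tr_{L^2_\per}[(z-H_\bk)^{-2}]\,\rd z$; the cases of $F_{\varepsilon,\sigma}^{E}$ (insert $H_\bk(z-H_\bk)^{-1}=z(z-H_\bk)^{-1}-\mathrm{Id}$) and $F_{\varepsilon,\sigma}^{\rho,W}$ (pair $W\in H^{-(s+2)}_\per$ with the $H^{s+2}_\per$-valued diagonal of the smoothing operator, via Lemma~\ref{lem:coverBZ}) are analogous. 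As these integrals converge locally uniformly in $\bk$, each $F_{\varepsilon,\sigma}^{X}$ is holomorphic near every point of $\CC^d$, i.e.\ entire.

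The heart of the argument is then the pointwise bound on the integrand. For a Gaussian-type $f^1 = f^1_{\rm G} + P\,\re^{-(\cdot)^2}$ with $P$ polynomial, the standard estimates on $\mathrm{erfc}$ give $|f^1(w)|\le C(1+|w|)^{\deg P}\re^{(\Im w)^2}$ for all $w$, and $|f^1(w)|\le C(1+|w|)^{\deg P}\re^{(\Im w)^2-(\Re w)^2}$ once $\Re w$ is large. With $w=(z-\varepsilon)/\sigma$, $z\in\Gamma_\bk$, so $\Re w=(\Re z-\varepsilon)/\sigma$ and $|\Im w|\le C\sigma^{-1}|\Im\bk|(\sqrt{(\Re z)_+}+|\Im\bk|+1)$, the quantity $(\Im w)^2-(\Re w)^2$ is, in $\Re z$, a downward parabola with maximum $\le C\sigma^{-2}(1+|\Im\bk|^4)$ attained at $\Re z\sim|\Im\bk|^2$; since $|\Im\bk|\le\sqrt d\,|\Im\bk|_\infty\le\sqrt d\,\sigma Y$ with $Y\ge1$, this is $\le\eta\,Y^4/\sigma^2$ (writing $\sigma^{-2}(1+\sigma^4Y^4)=\sigma^{-2}+\sigma^2Y^4\le(1+\sigma_0^4)Y^4/\sigma^2$). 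Moreover for $\Re z$ beyond $\sim|\Im\bk|^2$ the integrand decays like $\re^{-c(\Re z)^2/\sigma^2}$, which secures integrability and confines the $\re^{\eta Y^4/\sigma^2}$ bound to a contour segment of length $O(|\Im\bk|^2)=O(\sigma^2Y^2)$. Feeding $\sup_{\Gamma_\bk}|f^\sigma(z-\varepsilon)|\le C\re^{\eta Y^4/\sigma^2}$ (and the analogous bound for $g_\bk$), the resolvent bounds, and the polynomial trace-class factors into the three representations, and absorbing all polynomial-in-$(1/\sigma,Y)$ prefactors into a slightly larger $\eta$ (legitimate since $Y^4/\sigma^2\ge\max(1/\sigma^2,Y^4)$ dominates the logarithm of any such polynomial), gives the claimed bound, uniformly in $\varepsilon\in[\varepsilon_F-\delta_0,\varepsilon_F+\delta_0]$ and $0<\sigma\le\sigma_0$. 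The main obstacle is the spectral localization of the non-self-adjoint $H_\bk$ uniformly over the strip together with the bookkeeping of threading $\Gamma_\bk$ between $\sigma(H_\bk)$ and the region where $f^\sigma$ fails to decay — keeping $\dist(\Gamma_\bk,\sigma(H_\bk))$ bounded below, staying in the decay cone of $\mathrm{erfc}$, and handling the transition region $\Re z\approx\varepsilon$; verifying that the holomorphic functional calculus, the integration by parts, and the trace/integral interchange survive the loss of self-adjointness is routine but must be done with care.
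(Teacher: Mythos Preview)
Your plan is sound and leads to the same bound, but it departs from the paper's argument in two technical choices.

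\emph{Spectral localization.} The paper does not use the numerical range of $H_\bz$. Instead it views $H_\bz$ as a bounded perturbation of the \emph{normal} operator $-\tfrac12\Delta_\bz$, whose spectrum is explicit ($\{\tfrac12(\bK+\bz)^2\}_{\bK\in\RLat}$) and lies in the parabola $(\Im\lambda)^2\le Y^2(2\Re\lambda+Y^2)$ for $\bz\in S_Y$; a dilated parabola $\sC_{\alpha_c,Y}$ then encloses $\sigma(H_\bz)$ by a Neumann-series argument with $V$ as the perturbation. Your numerical-range route gives the same parabolic confinement and the same resolvent bound, and is arguably cleaner conceptually, though you should note that the polynomial $\fS_2$-control of $(z-H_\bk)^{-1}$ along $\Gamma_\bk$ (uniformly in complex $\bk$) still needs a short resolvent-identity argument against a fixed reference point.

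\emph{Trace-class regularization.} The paper does \emph{not} integrate by parts. It inserts a fixed factor $(H_\bz+\Sigma)^{-2}$ with $\Sigma=\tfrac{\alpha_c^2Y^2}{2}$ (the vertex of the parabola), writing
\[
G_{\sigma,Y}(\bz)=\frac{1}{2\pi\ri}\int_{\sC}\lambda f^1(\lambda/\sigma)(\lambda+\Sigma)^2\,\Tr_{L^2_\per}\!\left(\frac{1}{(H_\bz+\Sigma)^2}\,\frac{1}{\lambda-H_\bz}\right)\rd\lambda,
\]
and bounding each factor separately. Your integration-by-parts trick works too, but your assertion that the primitive $g_\bk$ ``also decays super-exponentially'' at \emph{both} ends is not automatic: it requires $\int_{\Gamma_\bk}f^\sigma(z-\varepsilon)\,\rd z=0$. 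This does hold---close $\Gamma_\bk$ at $\Re z=R$ by a vertical segment of length $O(\sqrt R)$, on which $|f^\sigma|\le\re^{-cR^2/\sigma^2}$, and apply Cauchy's theorem---but you should say so, since otherwise the boundary term $[g_\bk(z)(z-H_\bk)^{-1}]$ does not vanish (the resolvent stays of order one along your contour). The paper's insertion avoids this step entirely, at the cost of carrying the extra polynomial factor $(\lambda+\Sigma)^2$ through the growth estimate on $f^1$.

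The growth bound on $f^1$ you invoke, namely $|f^1(x+\ri y)|\le C(1+|x|+|y|)^q\re^{y^2-x^2}$ for $x\ge0$, is exactly the paper's Lemma on the growth of Gaussian-type smearing functions, and your computation of the maximum of $(\Im w)^2-(\Re w)^2$ along the parabola matches the paper's parametrized calculation. The absorption of polynomial prefactors into a larger $\eta$ is legitimate for the reason you give.
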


Let us first highlight the idea of the proofs of these lemmas. First, we will obtain bounds that only depend on
$\|V_{\per}\|_{L^\infty}$, so that we can absorb
$\varepsilon \in (\eps_{F}-\delta_{0},\eps_{F}+\delta_{0})$ in
$V_{\per}$. Without loss of generality, we take
$\varepsilon = 0$ and drop the subscript $\varepsilon$. In addition, to shorten the presentation, we only do the proof for the energy. In
the following, $F_{\sigma}$ denotes $F_{\varepsilon = 0, \sigma}^{E}$.

\medskip

We wish to study the analytic properties of the function
\begin{align*}
F_{\sigma}(\bk) = \Tr_{L^2_\per} \left[ H_{\bk} f^1 \left( H_\bk/\sigma \right) \right],
\end{align*}
where $f^1$ is one of the smearing functions~$\mbox{(\ref{smearing_function_1})-(\ref{smearing_function_1}$'''$)}$.
Formally, $H_{\bk}$ admits an analytic continuation to the whole
complex plane: when $\bz = \bk +\ri \by \in \CC^{d}$, we set
\begin{align*}
H_{\bz} = - \frac 1 2 \Delta_{\bz} + V_{\per},
\end{align*}
where
\begin{align} \label{eq:def:Deltaz}
- \Delta_{\bz} &=  (-\ri \nabla + \bz)^{2} =  (-\ri \nabla + \bk)^{2} + 2 \ri \by \cdot (-\ri \nabla + \bk) - {|\by|^{2}}.
\end{align}
The operator $H_{\bz}$ is not self-adjoint (it is not even a normal
operator), so it is difficult to compute the analytical extension $F_\sigma(\bz)$ of $F_{\sigma}(\bk)$. We
will use a representation in terms of contour integrals. Formally, it holds that
\begin{align*}
\Tr_{L^2_\per} \left[H_{\bz} f^{1} \left( H_\bz/\sigma \right) \right] 
& = \Tr_{L^{2}_{\per}} \left[ \dfrac{1}{2 \pi \ri} \oint_{\sC}  \lambda f^{1}(\lambda/\sigma) \left(\dfrac{1}{\lambda - H_\bz} \right) \rd \lambda \right] \\
& = \dfrac{1}{2 \pi \ri} \oint_{\sC}  \lambda f^{1}(\lambda/\sigma) \Tr_{L^2_\per} \left[ \dfrac{1}{\lambda - H_\bz} \right] \rd \lambda
\end{align*}
for some (infinite) contour $\sC$ enclosing the spectrum of $H_{\bz}$. Unfortunately, we cannot commute the trace and the
integral in the last line whenever the dimension $d$ is greater than
$1$. The reason is that the operator $(\lambda - H_\bz)^{-1}$ is not
trace-class when $d \geq 2$ (see \eqref{eq:comparison_eigenvalues}). Instead, we consider the contour integral\footnote{
	In the case of the density, we can take for instance
	\[
	G_{\sigma}^W(\bz) := \dfrac{1}{2 \pi \ri} \int_{\sC} \left[ f^{1}(\lambda/\sigma)(\lambda + \Sigma)^2 \right] \Tr_{L^2_\per} \left( \dfrac{1}{(H_\bz + \Sigma)^{(s+4)/2}}W \dfrac{1}{(H_\bz + \Sigma)^{(s+4)/2}} \dfrac{1}{\lambda - H_\bz} \right) \rd \lambda,
      \]
      with the integrand trace-class from \eqref{eq:growth_Wnk}.
}
\begin{equation} \label{eq:TrAnalytic}
G_{\sigma}(\bz) = \dfrac{1}{2 \pi \ri} \int_{\sC} \left[\lambda f^{1}(\lambda/\sigma)(\lambda + \Sigma)^2 \right] \Tr_{L^2_\per} \left( \dfrac{1}{(H_\bz + \Sigma)^2} \dfrac{1}{\lambda - H_\bz} \right) \rd \lambda,
\end{equation}
where $\Sigma \in \R$ is a well-chosen shift, and prove that $G_{\sigma}$ is
an analytic continuation of $F_{\sigma}$.

\medskip

We prove Lemma~\ref{lem:nearRealLine} and Lemma~\ref{lem:farRealLine} in the following two sections. In both cases
we prove that there exists appropriate contours such that $G_{\sigma}
= F_\sigma$ using a perturbation argument. When $\bz = \bk + \ri \by
\in \CC^3$ with $| \by |$ small, we see $H_\bz$ as a perturbation of
$H_\bk$, while when $\by$ is large, we see it as a perturbation of the
free operator $-\frac12 \Delta_\bz$.

\subsection{Proof of Lemma~\ref{lem:nearRealLine}}
We introduce $\sC$ the parabolic contour in the complex plane defined by
\begin{align} \label{eq:def:sC}
\sC := \left\{\lambda \in \CC, | \Im \ \lambda |^2 = \sigma^{2}\left(1 + \frac{\Re \ \lambda}{\Sigma}\right)\right\}, 
\quad \text{where we set} \quad
\Sigma := \|V\|_{L^\infty}+1.
\end{align}
It is the (unique) parabola that passes through the points $-\Sigma$ and
$\pm \ri \sigma$. In particular, it does not encounter the poles of the
Fermi-Dirac function at $(2\Z + 1)\pi \sigma \ri$. Let us prove that $\sC$ encloses the spectrum of $H_{\bk + \ri \by}$ for $\by$ small
enough (see Figure \ref{fig:contourFD_1}).

\begin{figure}[H]
\centering
\begin{tikzpicture}

\draw[->] (-7, 0) -> (6, 0) node[right] {$\R$};
\draw[->] (-1.5, -2) -> (-1.5, 2) node[above] {$\ri \R$};

\draw[red, line width=3] (-4,0) -- (-1,0);
\draw[red, line width=3] (0,0) -- (2,0);
\draw[red, line width=3] (3,0) -- (5,0) node[below] {$\sigma(H)$};


\draw [black, fill] (0, 0.3) circle(2pt) ;     \draw [black, fill] (-4.3, 0.05) circle(2pt) ;     \draw [black, fill] (1, -0.4) circle(2pt) ;
\draw [black, fill] (-3, 0.1) circle(2pt) ;     \draw [black, fill] (3, 0.3) circle(2pt) ;     \draw [black, fill] (3.5, 0.4) circle(2pt) node[right] {$\sigma(H_{\bz})$};

\draw [black, fill] (-1.5, 1.5) node[cross]{} node[above right] {$\pi \sigma\ri$};
\draw [black, fill] (-1.5, .7) node[cross]{} node[above right] {$\sigma\ri$};
\draw [black, fill] (-6., 0.) node[cross]{} node[above left] {$-\Sigma$};

\draw [domain=0:3.4]  plot ({\x^2-6}, {\x/3} ) node[above right] {$\sC$};
\draw [ domain=0:3.4]  plot ({\x^2-6}, {-\x/3} );

\end{tikzpicture}
\caption{Spectrum of the operator $H_{\bz}$ for $\bz = \bk + \ri \by$, and the contour $\sC$
that encloses it, while avoiding the poles of the Fermi-Dirac
function at $(2\Z+1)\sigma\ri$. 
}
\label{fig:contourFD_1}
\end{figure}

\begin{lemma}
\label{lem:res_ineg}
There exist $C \in \R^+$ and $Y > 0$ such that, for all $0<\sigma \leq
\sigma_{0}$, all $\bz = \bk + \ri \by \in S_{\sigma Y}$, and all $\lambda \in \sC$, we have the following estimates:
\begin{align}
\label{res_ineg_1}
\|(H_{\bz}-\lambda)^{-1}\|_{\sB} &\leq C \sigma^{-1},\\
\label{res_ineg_2}
\|(H_{\bz}+\Sigma)^{-1}\|_{\fS_{2}} &\leq C,\\
\label{res_ineg_3}
\Tr_{L^2_\per} \left( \dfrac{1}{(H_\bz + \Sigma)^2} \dfrac{1}{\lambda - H_\bz} \right) &\leq C \sigma^{-1},\\
\label{res_ineg_4}
\partial_{\bz} \Tr_{L^2_\per} \left( \dfrac{1}{(H_\bz + \Sigma)^2} \dfrac{1}{\lambda - H_\bz} \right) &\leq C \sigma^{-1}.
\end{align}
\end{lemma}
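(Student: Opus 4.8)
I would prove the four estimates in that order, the first one being the crux: once the resolvent bound \eqref{res_ineg_1} is in hand and $(H_\bz+\Sigma)^{-1}$ is shown to be Hilbert--Schmidt, the remaining two are essentially bookkeeping with Schatten norms. The recurring tool is the quadratic form identity: expanding \eqref{eq:def:Deltaz}, for $u\in H^2_\per$ with $\norm{u}=1$ and $\bz=\bk+\ri\by$,
\[
\Re\inner{u}{H_\bz u}=\tfrac12\norm{(-\ri\nabla+\bk)u}^2-\tfrac{|\by|^2}{2}+\inner{u}{Vu},\qquad
\Im\inner{u}{H_\bz u}=\inner{u}{\by\cdot(-\ri\nabla+\bk)u}.
\]

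\textbf{Estimate \eqref{res_ineg_1}.} With $\Sigma=\norm{V}_{L^\infty}+1$ the first identity gives $\Re\inner{u}{(H_\bz+\Sigma)u}\geq\tfrac12\norm{(-\ri\nabla+\bk)u}^2+1-\tfrac{|\by|^2}{2}$, so for $|\by|\leq\sigma_0Y$ with $\sigma_0Y$ small enough the numerical range $W(H_\bz)$ lies in $\{\Re w\geq-\Sigma+\tfrac12\}$ and moreover $\Re\inner{u}{H_\bz u}\geq\tfrac12\norm{(-\ri\nabla+\bk)u}^2-\Sigma$; combined with the second identity and Cauchy--Schwarz this yields $|\Im\inner{u}{H_\bz u}|^2\leq 2\sigma^2Y^2(\Re\inner{u}{H_\bz u}+\Sigma)$. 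Hence $W(H_\bz)$ is contained in the parabolic region $\{|\Im w|^2\leq 2\sigma^2Y^2(\Re w+\Sigma)\}$, while $\sC=\{|\Im\lambda|^2=\tfrac{\sigma^2}{\Sigma}(\Re\lambda+\Sigma)\}$; choosing $Y<1/\sqrt{2\Sigma}$ makes $\sC$ strictly enclose this region, and an elementary geometric computation (splitting according to whether $\Re\lambda$ is close to $-\Sigma$ or not) gives $\dist(\lambda,W(H_\bz))\geq c\sigma$ for all $\lambda\in\sC$. Since $\CC\setminus\overline{W(H_\bz)}$ is connected and meets the resolvent set (for $\Re\lambda\to-\infty$, $H_\bz-\lambda$ is invertible by Neumann series around $-\tfrac12\Delta_\bk$, using that $\by$ is small), the classical bound $\norm{(H_\bz-\lambda)^{-1}}_\sB\leq 1/\dist(\lambda,W(H_\bz))$ applies and gives \eqref{res_ineg_1}. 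The same argument at $\lambda=-\Sigma$ (distance $\geq\tfrac12$ to $W(H_\bz)$) gives $\norm{(H_\bz+\Sigma)^{-1}}_\sB\leq 2$.

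\textbf{Estimates \eqref{res_ineg_2} and \eqref{res_ineg_3}.} For \eqref{res_ineg_2} I would write $(H_\bz+\Sigma)^{-1}=(-\tfrac12\Delta_\bk+\Sigma)^{-1}\big[(-\tfrac12\Delta_\bk+\Sigma)(H_\bz+\Sigma)^{-1}\big]$. The free resolvent $(-\tfrac12\Delta_\bk+\Sigma)^{-1}$ has eigenvalues $(\tfrac12|\bk+\bK|^2+\Sigma)^{-1}$, $\bK\in\RLat$, so it is Hilbert--Schmidt precisely because $d\leq3$ (the sum of squares converges like $\sum|\bK|^{-4}$). The bracketed operator equals $1+(\Sigma-\norm{V}-V)(H_\bz+\Sigma)^{-1}-B_\by(H_\bz+\Sigma)^{-1}$ with $B_\by=\ri\by\cdot(-\ri\nabla+\bk)-\tfrac{|\by|^2}{2}$, and is bounded since $\norm{(H_\bz+\Sigma)^{-1}}_\sB\leq2$ and, from $\Re\inner{u}{(H_\bz+\Sigma)u}\geq\tfrac12\norm{(-\ri\nabla+\bk)u}^2$, one gets $\norm{(-\ri\nabla+\bk)(H_\bz+\Sigma)^{-1}}_\sB\leq2$, hence $\norm{B_\by(H_\bz+\Sigma)^{-1}}_\sB\leq C|\by|$. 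This gives \eqref{res_ineg_2}. Estimate \eqref{res_ineg_3} is then immediate: $(H_\bz+\Sigma)^{-2}\in\fS_1$ with $\norm{(H_\bz+\Sigma)^{-2}}_{\fS_1}\leq\norm{(H_\bz+\Sigma)^{-1}}_{\fS_2}^2\leq C$, and $|\Tr_{L^2_\per}(AB)|\leq\norm{A}_{\fS_1}\norm{B}_\sB$ with $A=(H_\bz+\Sigma)^{-2}$, $B=(\lambda-H_\bz)^{-1}$, using \eqref{res_ineg_1}.

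\textbf{Estimate \eqref{res_ineg_4}.} Since $H_\bz$ is an analytic family of type (A) with $\partial_{z_j}H_\bz=D_j:=-\ri\partial_j+z_j$, differentiating the resolvents gives $\partial_{z_j}\Tr_{L^2_\per}[R^2S]$ as a finite sum of traces of products of the three resolvent factors $R:=(H_\bz+\Sigma)^{-1}$, $S:=(\lambda-H_\bz)^{-1}$ with one $D_j$ inserted (where $\partial_{z_j}R=-RD_jR$, $\partial_{z_j}S=SD_jS$). The building blocks are: $R\in\fS_2$ (norm $\leq C$); $RD_j$ and $D_jR$ bounded (the quadratic form bound again, using $D_j^*=D_j-2\ri\by_j$); $\norm{S}_\sB\leq C/\sigma$; and, crucially, the sharper bound $\norm{S}_\sB\leq C\big(\sigma\sqrt{1+(\Re\lambda+\Sigma)/\Sigma}\big)^{-1}$ on $\sC$, since the contour parabola recedes from the spectrum as $\Re\lambda\to\infty$ — this is what keeps $D_jS$ and $SD_j$ controlled along the whole contour. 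The one delicate point is that in dimension $d=3$ a single resolvent is only Hilbert--Schmidt, so every trace must be organized as (a trace-class block)$\times$(a bounded operator), keeping $R^2$ (or $R\cdot RD_j$, etc.) intact and, where factors need to be rearranged, using the resolvent identity $SR=\tfrac1{\lambda+\Sigma}(S+R)$ to trade a factor for a decaying scalar; estimating the term carrying $D_j$ between two copies of $S$ is where the improved resolvent decay along $\sC$ is needed. This Schatten-class bookkeeping is the part I expect to be the main obstacle.
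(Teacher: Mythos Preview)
Your proposal is correct and, for the key estimate \eqref{res_ineg_1}, takes a genuinely different route from the paper. The paper writes $H_\bz=(H_\bk-\tfrac12|\by|^2)+\ri\by\cdot(-\ri\nabla+\bk)$, treats the second term as a small perturbation, and factors
\[
(\lambda-H_\bz)=\bigl(\lambda-(H_\bk-\tfrac12|\by|^2)\bigr)\Bigl(1+\ri\by\cdot\bigl(\lambda-(H_\bk-\tfrac12|\by|^2)\bigr)^{-1}(\ri\nabla+\bk)\Bigr),
\]
bounding the first factor via the spectral theorem (distance from $\lambda\in\sC$ to the real half-line $[-\Sigma+\tfrac12,\infty)$) and inverting the second by Neumann series for $|\by|\le\sigma Y$ with $Y$ small. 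Your numerical-range argument is more direct: you locate $W(H_\bz)$ inside a thin parabolic region (coefficient $\propto|\by|^2$) and compare it to the contour parabola $\sC$ (coefficient $\sigma^2/\Sigma$), invoking the standard bound $\|(H_\bz-\lambda)^{-1}\|\le\dist(\lambda,W(H_\bz))^{-1}$. Both approaches reduce to the same geometric distance estimate; yours avoids the Neumann series but needs the numerical-range resolvent bound (and the check that $\CC\setminus\overline{W(H_\bz)}$ meets the resolvent set, which you include). Your treatment of \eqref{res_ineg_2} has a small algebra slip in expanding the bracket (it should read $1-(V+B_\by)(H_\bz+\Sigma)^{-1}$), but the argument is fine.

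For \eqref{res_ineg_4} you are right to flag the Schatten bookkeeping as the delicate point: the paper dispatches it in one sentence (``similar arguments and the fact that $(-\ri\nabla+\bz)(H_\bz-\lambda)^{-1}$ is bounded''). The problematic term is $\Tr[R^2SD_jS]$, with two copies of $S$; your improved decay $\|S\|\lesssim(\sigma\sqrt{1+\Re\lambda/\Sigma})^{-1}$ and the identity $RS=(\lambda+\Sigma)^{-1}(S-R)$ do not obviously bring this down to $C\sigma^{-1}$ (one naturally gets $C\sigma^{-2}$). This does not matter for the application: \eqref{res_ineg_4} is used only to justify analyticity of $G_\sigma$ (differentiation under the $\lambda$-integral), and any bound uniform in $\lambda\in\sC$ suffices there, while the quantitative bound on $G_\sigma$ comes from \eqref{res_ineg_3}.
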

\begin{proof}

From~\eqref{eq:def:Deltaz}, it holds that
\begin{align*}
H_{\bz} = \frac12 (-\ri \nabla + \bk)^{2} +  \ri \by \cdot (-\ri \nabla + \bk) - \frac12{|\by|^{2}} + V_\per = \left( H_{\bk} - \tfrac 1 2 |\by|^{2} \right) + \ri \by \cdot (-\ri \nabla + \bk).
\end{align*}
For $Y \le 1$, the spectrum of the self-adjoint operator $H_{\bk} - \frac 1 2 |\by|^{2}$ is contained in $ [-\Sigma+ \tfrac 1 2, +\infty)$, which is disjoint from $\sC$. In particular, for $\lambda \in \sC$, we have
\begin{equation} \label{eq:lambda-Hz}
\left( \lambda - H_\bz \right) =  \left( \lambda - (H_{\bk} - \tfrac 1 2 |\by|^{2}) \right) \left( 1 + \ri \by \cdot \left( \lambda - (H_{\bk} - \tfrac 1 2 |\by|^{2}) \right)^{-1}  (\ri \nabla + \bk) \right).
\end{equation}
Let us evaluate the norm of the normal operator $\left( \lambda - (H_{\bk} - \tfrac 1 2 |\by|^{2}) \right)^{-1} $. From~\eqref{eq:def:sC}, we have
\begin{align*}
\left\|\left(  \lambda - (H_{\bk} - \tfrac 1 2 |\by|^{2})\right)^{-1}\right\|_{\cB} &=
\dist\left(\lambda, \Spec(H_{\bk} - \tfrac 1 2 |\by|^{2})\right)^{-1} 
\leq \left[ {(\Im \lambda)^{2} + (-\Sigma + \tfrac 1 2 - \Re \lambda)_{+}^{2}} \right]^{-1/2}\\
&= \left[{\sigma^{2}(1+ \tfrac{\Re \lambda} \Sigma) + (-\Sigma + \tfrac 1 2 - \Re \lambda)_{+}^{2}}\right]^{-1/2}
\leq C \sigma^{-1},
\end{align*}
where $x_{+} := \max(x,0)$ and where $C \in \R_{+}$ is a constant that depends only on
$\Sigma$ and $\sigma_{0}$. The last inequality is obtained by optimizing
over all $\Re \lambda \geq -\Sigma$.

\medskip

From the fact that $H_{\bk}$ is a bounded perturbation of $\tfrac
1 2 (-\ri
\nabla + \bk)^{2}$, and using similar calculations, we easily get that there exists $C \in \R_+$ that depends only on $\Sigma$ and $\sigma_0$ such that
\begin{align}
\label{res_times_nabla}
\|\ri\by \cdot (\lambda - (H_{\bk} - \tfrac 1 2 |\by|^{2}))^{-1} \cdot (-\ri \nabla + \bk)\|_{\cB}\leq C \sigma^{-1} | \by |.
\end{align}

As a consequence, for ${Y} \leq 1/(2C)$ and $| \by | \le \sigma Y$, the operator on the right parenthesis of~\eqref{eq:lambda-Hz} is invertible, and its inverse is bounded in norm by $2$. Inverting~\eqref{eq:lambda-Hz} leads to \eqref{res_ineg_1}.

\medskip

Inequality \eqref{res_ineg_2} is proved in a similar way (notice that the operator
$(\Sigma - (H_{\bk} - \frac 1 2 |\by|^{2}))^{-1}$ is Hilbert-Schmidt
by \eqref{eq:comparison_eigenvalues}). Inequality \eqref{res_ineg_3} is a consequence of \eqref{res_ineg_1} and \eqref{res_ineg_2}, together with the operator inequality $\left| \Tr(B^2A) \right|  \le \| B \|_{\fS_2}^2 \| A \|_{\sB}$.

\medskip

We finally prove \eqref{res_ineg_4}. For all
$\mu$ in the resolvent set of $H_{\bz}$, we have
\begin{align*}
\partial_{\bz} \left( \frac{1}{H_{\bz} - \mu} \right) &= - \left( \frac{1}{H_{\bz} - \mu} \right) \partial_{\bz} H_{\bz} \left( \frac{1}{H_{\bz} - \mu} \right)
= - \frac{1}{H_{\bz} - \mu}  (-\ri \nabla + \bz) \frac{1}{H_{\bz} - \mu}.
\end{align*}
We then use similar arguments and the fact that the operator $(-\ri \nabla + \bz) \left(H_{\bz} - \lambda \right)^{-1}$ is bounded.
\end{proof}

We can now prove the analyticity of $G_\sigma$ defined in~\eqref{eq:TrAnalytic}.
\begin{lemma}
\label{lem:G_analytic}
There exist $C \in \R_+$ and $Y > 0$ such that, for all $0<\sigma \leq \sigma_{0}$,
the function $G_{\sigma}$ is analytic on $S_{\sigma Y}$, and
\begin{align*}
\sup_{\bz \in S_{\sigma Y }} |G_{\sigma}(\bz)| \leq C \sigma^{-1}.
\end{align*}
\end{lemma}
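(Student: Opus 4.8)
Since $G_{\sigma}$ is nothing but a contour integral whose integrand has essentially already been estimated, operator-theoretically, in Lemma~\ref{lem:res_ineg}, the plan is to (i) read off from that lemma the bounds along $\sC$, (ii) prove one scalar estimate on the parabola $\sC$, and (iii) conclude holomorphy by a routine parameter-integral argument. Concretely, for $\lambda\in\sC$ and $\bz\in S_{\sigma Y}$ I would write
\[
  T_{\sigma}(\lambda,\bz):=\Tr_{L^2_\per}\!\left(\frac{1}{(H_{\bz}+\Sigma)^{2}}\,\frac{1}{\lambda-H_{\bz}}\right),
  \qquad
  w_{\sigma}(\lambda):=\lambda\,f^{1}(\lambda/\sigma)\,(\lambda+\Sigma)^{2},
\]
so that $G_{\sigma}(\bz)=\frac{1}{2\pi\ri}\int_{\sC}w_{\sigma}(\lambda)\,T_{\sigma}(\lambda,\bz)\,\rd\lambda$. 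By \eqref{res_ineg_2} the operator $(H_{\bz}+\Sigma)^{-2}$, being a product of two $\fS_{2}$ operators, is trace-class, and by \eqref{res_ineg_1} $(\lambda-H_{\bz})^{-1}$ is bounded, so $T_{\sigma}(\lambda,\bz)$ is well defined on $\sC\times S_{\sigma Y}$; moreover \eqref{res_ineg_3} gives $|T_{\sigma}(\lambda,\bz)|\le C\sigma^{-1}$ and \eqref{res_ineg_4} gives $|\partial_{\bz}T_{\sigma}(\lambda,\bz)|\le C\sigma^{-1}$, all uniformly in $\lambda\in\sC$, $\bz\in S_{\sigma Y}$ and $0<\sigma\le\sigma_{0}$. (If one insists on analyticity on the closed strip $S_{\sigma Y}$, it suffices to run Lemma~\ref{lem:res_ineg} with a slightly smaller $Y$, so that these bounds hold on a complex neighbourhood of $S_{\sigma Y}$.)

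The crux is the scalar bound $\int_{\sC}|w_{\sigma}(\lambda)|\,|\rd\lambda|\le C$, with $C$ depending only on $\Sigma$ and $\sigma_{0}$ (\emph{not} on $\sigma$). I would parametrise $\sC$ from \eqref{eq:def:sC} by $u:=\Im(\lambda)/\sigma\in\R$, for which $\lambda=\Sigma(u^{2}-1)+\ri\sigma u$, $\lambda/\sigma=\tfrac{\Sigma}{\sigma}(u^{2}-1)+\ri u$, and $|\rd\lambda|\le C(1+|u|)\,\rd u$, and split the integral at $|u|=1$. On $|u|\le1$ one has $\Re(\lambda/\sigma)\le0$ and $|\Im(\lambda/\sigma)|\le1$, so $\lambda/\sigma$ runs through the left half-strip $\{z:\Re z\le0,\ |\Im z|\le1\}$; every smearing function is bounded by a universal constant there (for Fermi--Dirac because $\Re(1+\re^{z})\ge1$ on this strip, which in particular avoids the poles $(2\Z+1)\pi\ri$; for the Gaussian-type functions because the entire factor $\re^{-z^{2}}$ and its polynomial prefactors are bounded on $\{\Re z\le0,\ |\Im z|\le1\}$, where $|\re^{-z^{2}}|=\re^{-(\Re z)^{2}+(\Im z)^{2}}\le\re$). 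Since there also $|\lambda|,|\lambda+\Sigma|\le C$ and the arc has length $O(1)$, this portion contributes $O(1)$. On $|u|\ge1$ one has $\Re(\lambda/\sigma)=\tfrac{\Sigma}{\sigma}(u^{2}-1)\ge0$ and $f^{1}(\lambda/\sigma)$ decays there, at least like $\re^{-c(u^{2}-1)/\sigma}$ (exponentially for Fermi--Dirac, faster for the Gaussian-type ones, the Gaussian decay in any case dominating the polynomial prefactors for $\sigma\le\sigma_{0}$). Combined with $|\lambda|\,|\lambda+\Sigma|^{2}\le C u^{6}$ and $|\rd\lambda|\le C|u|\,\rd u$, the substitution $v=u^{2}-1$ then $v=\sigma w$ gives $\int_{|u|\ge1}|w_{\sigma}|\,|\rd\lambda|\le C\int_{0}^{\infty}(1+v)^{3}\re^{-cv/\sigma}\,\rd v=C\sigma\int_{0}^{\infty}(1+\sigma w)^{3}\re^{-cw}\,\rd w\le C\sigma_{0}$. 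Adding the two contributions proves the bound; in particular the integral defining $G_{\sigma}$ converges absolutely, and
\[
  |G_{\sigma}(\bz)|\le\frac{1}{2\pi}\Bigl(\sup_{\lambda\in\sC}|T_{\sigma}(\lambda,\bz)|\Bigr)\int_{\sC}|w_{\sigma}(\lambda)|\,|\rd\lambda|\le C\sigma^{-1}.
\]

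For analyticity, I would fix $\lambda\in\sC$ and note that $\bz\mapsto H_{\bz}$ is a polynomial, hence entire, $\sB(H^{2}_\per,L^{2}_\per)$-valued map; by \eqref{res_ineg_1}--\eqref{res_ineg_2} both $\lambda$ and $-\Sigma$ lie in its resolvent set for $\bz\in S_{\sigma Y}$, so the Neumann series makes $\bz\mapsto(\lambda-H_{\bz})^{-1}\in\sB$ and $\bz\mapsto(H_{\bz}+\Sigma)^{-1}\in\fS_{2}$ holomorphic; taking the product in $\fS_{1}$ and composing with the bounded linear functional $\Tr_{L^2_\per}$ shows $\bz\mapsto T_{\sigma}(\lambda,\bz)$ is holomorphic on $S_{\sigma Y}$. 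Holomorphy of $G_{\sigma}$ then follows from the standard theorem on integrals depending holomorphically on a parameter: the dominating bound $|w_{\sigma}(\lambda)\,\partial_{\bz}T_{\sigma}(\lambda,\bz)|\le C\sigma^{-1}|w_{\sigma}(\lambda)|$ is integrable over $\sC$ uniformly for $\bz\in S_{\sigma Y}$, so one may differentiate under the integral sign (equivalently, note $G_{\sigma}$ is continuous by dominated convergence and apply Morera's theorem in each complex coordinate together with Fubini). This establishes both assertions for the energy integrand; the $\cN$ and $\rho$ cases are identical once the analogue of Lemma~\ref{lem:res_ineg} for the relevant integrand has been invoked.

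I expect the one real obstacle to be the scalar contour estimate: $\sC$ itself depends on $\sigma$ and collapses onto an $O(\sigma)$-neighbourhood of the real axis as $\sigma\to0$, so one must check that the decay of $f^{1}(\cdot/\sigma)$ in the direction along which $\sC$ escapes to infinity exactly beats the polynomial weight $\lambda(\lambda+\Sigma)^{2}$ and the length of $\sC$, with all constants independent of $\sigma$; by contrast everything operator-theoretic is already encapsulated in Lemma~\ref{lem:res_ineg}, and the holomorphy step is routine.
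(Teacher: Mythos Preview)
Your proposal is correct and follows essentially the same route as the paper: both extract the uniform $C\sigma^{-1}$ bound on the trace factor from Lemma~\ref{lem:res_ineg}, parametrise $\sC$ by $\lambda(t)=\Sigma(t^{2}-1)+\ri\sigma t$, and reduce the problem to showing $\int_{\R}|f^{1}(\sigma^{-1}\Sigma(t^{2}-1)+\ri t)|(1+|t|^{7})\,\rd t$ is bounded uniformly in $\sigma$ by splitting at $|t|$ small versus large (the paper splits at $|t|=\pi/2$ rather than $|t|=1$, and treats only the Fermi--Dirac case in detail). One small imprecision: your boundedness argument on $\{\Re z\le0,\ |\Im z|\le1\}$ for Gaussian-type $f^{1}$ invokes ``the entire factor $\re^{-z^{2}}$ and its polynomial prefactors'', which handles the Hermite-times-Gaussian correction terms but not the base $\tfrac12(1-\erf)$ itself; that part needs a separate (easy) observation that $\erf(z)\to-1$ as $\Re z\to-\infty$ in a horizontal strip.
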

\begin{proof}
From the previous Lemma~\ref{lem:res_ineg}, we already see that $G_\sigma$ is analytic. Let us prove the bound. It holds that
\begin{align*}
|G_{\sigma}(\bz)| &\leq C \sigma^{-1} \int_{\sC} |f^{1}(\sigma^{-1} \lambda) \lambda(\lambda + \Sigma)^2| |\rd \lambda|.
\end{align*}
To evaluate the last integral, we parametrize the contour $\sC$ with $\lambda(t) := \Sigma(t^{2} - 1) + \ri \sigma
t$ for $t \in \R$, so that 
\begin{equation} \label{eq:estimatelambda}
| \lambda | = \left( \Sigma^2 (t^2 - 1)^2 + \sigma^2 t^2 \right)^{1/2} \le C(t^2 + 1)
\quad \text{and} \quad
|\rd \lambda| = \sqrt{4 t^{2} \Sigma^{2} + \sigma^{2}} \rd t \le C (| t | + 1) \rd t,
\end{equation}
for some $C \in \R_{+}$ that depends only on $\Sigma$ and $\sigma_0$.
We obtain
\begin{align*}
\left| G_{\sigma}(\bz) \right| &
\leq C \sigma^{-1} \left(  \int_{\R} \left|f^{1}\left( \sigma^{-1} \Sigma(t^{2} - 1) + \ri t \right)\right| (1+| t |^{7})\rd t \right).
\end{align*}

Let us prove that the last integral is uniformly bounded for
$0<\sigma \leq \sigma_{0}$. We prove this result in full details for the Fermi-Dirac smearing $f^{1}(x) =
(1+\re^{x})^{-1}$, the other cases being similar. We
split the integral in the regions $|t| \leq \pi/2$ and
$|t| > \pi/2$. For $| t | \leq \pi/2$, it holds that $\cos t \ge 0$, so that
\begin{align*}
\left| f^1 \left( \sigma^{-1}  \Sigma (t^{2} - 1) + \ri t \right) \right|
& = \left| 1 + \re^{\sigma^{-1} \Sigma(t^{2} - 1)} \re^{\ri t} \right|^{-1}
\le \left| \Re \left( 1 + \re^{\sigma^{-1} \Sigma(t^{2} - 1)} \re^{\ri t} \right) \right|^{-1} \\
& =  \left| 1 + \re^{\sigma^{-1} \Sigma(t^{2} - 1)} \cos t \right|^{-1} \le 1.
\end{align*}
We deduce that the integral over $| t | \leq \pi/2$ is uniformly bounded for $0<\sigma \le \sigma_0$. For $t \ge \pi/2 > 1$, it holds that $(t^2 - 1) > 0$, so that
\[
\left| 1 + \re^{\sigma^{-1} \Sigma(t^{2} - 1)} \re^{\ri t} \right|^{-1} \le \left| \re^{\sigma^{-1} \Sigma(t^{2} - 1)} -1 \right|^{-1} \le \left| \re^{\sigma_0^{-1} \Sigma( t^2  - 1)} -1 \right|^{-1},
\]
where the right-hand side no longer depends on $0<\sigma \le \sigma_0$. Finally, we check that the integral
\[
\int_{| t | \ge \frac{\pi}{2}}\left| \re^{\sigma_0^{-1} \Sigma( t^2  - 1)} -1 \right|^{-1} (1 + | t |^7) \rd t
\]
is absolutely convergent, and independent of $0<\sigma \le \sigma_0$. This ends the proof of Lemma~\ref{lem:G_analytic}.


\end{proof}

  We now prove, as claimed, that $G_{\sigma}$ is an analytic extension
  of $F_{\sigma}$.
  \begin{lemma}
  \label{lem:G_eq_F}
  For all $0 < \sigma \leq \sigma_{0}$ and all $\bk \in \R^d$, it holds that $G_{\sigma}(\bk) = F_{\sigma}(\bk)$.
  \end{lemma}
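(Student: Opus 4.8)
The plan is to evaluate $G_{\sigma}(\bk)$ by expanding the trace in \eqref{eq:TrAnalytic} over the eigenbasis of $H_{\bk}$ and carrying out the contour integral eigenvalue by eigenvalue. Set $h(\lambda) := \lambda\, f^{1}(\lambda/\sigma)\,(\lambda+\Sigma)^{2}$, so that (recall $\varepsilon = 0$ has been absorbed into $V_{\per}$)
\[
G_{\sigma}(\bk) = \frac{1}{2\pi\ri}\int_{\sC} h(\lambda)\,\Tr_{L^2_\per}\!\left(\frac{1}{(H_{\bk}+\Sigma)^2}\,\frac{1}{\lambda - H_{\bk}}\right)\rd\lambda .
\]
Since $\bz = \bk$ is real, $H_{\bk}$ is self-adjoint with compact resolvent, eigenvalues $\eps_{n\bk}$ and an $L^2_\per$-orthonormal eigenbasis $(u_{n\bk})_{n\in\N^*}$, so the trace equals the series $\sum_{n\in\N^*}(\eps_{n\bk}+\Sigma)^{-2}(\lambda-\eps_{n\bk})^{-1}$, which converges absolutely and uniformly for $\lambda \in \sC$: indeed $|\lambda-\eps_{n\bk}| \ge \dist(\lambda,\Spec(H_{\bk})) \ge c\sigma$ by \eqref{res_ineg_1} applied with $\by = 0$, and $\sum_n(\eps_{n\bk}+\Sigma)^{-2} < \infty$ by \eqref{eq:comparison_eigenvalues} since $d \le 3$.

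I would then interchange the sum and the contour integral, which is legitimate by Fubini's theorem: using $|\lambda-\eps_{n\bk}| \ge c\sigma$ again and the fact, established in the course of the proof of Lemma~\ref{lem:G_analytic} through the estimates \eqref{eq:estimatelambda}, that $\int_{\sC}|h(\lambda)|\,|\rd\lambda| < \infty$, one gets $\sum_n\int_{\sC}|h(\lambda)|\,(\eps_{n\bk}+\Sigma)^{-2}|\lambda-\eps_{n\bk}|^{-1}|\rd\lambda| < \infty$. Hence
\[
G_{\sigma}(\bk) = \sum_{n\in\N^*}\frac{1}{(\eps_{n\bk}+\Sigma)^2}\cdot\frac{1}{2\pi\ri}\int_{\sC}\frac{h(\lambda)}{\lambda-\eps_{n\bk}}\,\rd\lambda .
\]
Each inner integral I would compute by the residue theorem. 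The function $h$ is holomorphic in the closed region $R$ enclosed by $\sC$ (the region to the right of the parabola): the polynomial factors are entire, $f^{1}(\cdot/\sigma)$ is entire for the Gaussian-type smearings, and for the Fermi--Dirac smearing its only singularities are the Matsubara poles $(2\ZZ+1)\pi\sigma\ri$, which lie on $\{\Re\lambda = 0\}$, where $\sC$ has imaginary part $\pm\sigma$; they are therefore outside $R$ because $\pi\sigma > \sigma$. Moreover $\eps_{n\bk} \in R$ for every $n$, since $\eps_{n\bk} \ge \min\Spec(H) > -\Sigma$ while the vertex of $\sC$ sits at $-\Sigma$ on the real axis. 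Closing $\sC$ at $|t| \le T$ (for the parametrization $\lambda(t) = \Sigma(t^2-1)+\ri\sigma t$) by the vertical segment joining $\lambda(T)$ to $\lambda(-T)$, the residue theorem gives, for $T$ large, that the closed-contour integral equals $2\pi\ri\, h(\eps_{n\bk})$. Letting $T \to \infty$, the tail of $\sC$ beyond $|t| = T$ contributes nothing by absolute integrability, while the segment contribution vanishes because there $\Re\lambda = \Sigma(T^2-1) \to \infty$ and $|f^{1}(\lambda/\sigma)|$ decays exponentially in $\Re\lambda/\sigma$ (Fermi--Dirac) or super-exponentially (Gaussian-type, where the decay in the real direction dominates the growth over the bounded range $|\Im\lambda| \le \sigma T$), which beats the polynomial growth of the remaining factors. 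This yields $\frac{1}{2\pi\ri}\int_{\sC}h(\lambda)(\lambda-\eps_{n\bk})^{-1}\rd\lambda = h(\eps_{n\bk})$, with $\sC$ oriented positively around $\Spec(H_{\bk})$ as in Figure~\ref{fig:contourFD_1}.

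Substituting back and cancelling the factor $(\eps_{n\bk}+\Sigma)^2$ would give
\[
G_{\sigma}(\bk) = \sum_{n\in\N^*}\eps_{n\bk}\,f^{1}(\eps_{n\bk}/\sigma) = \Tr_{L^2_\per}\!\left[H_{\bk}\,f^{1}(H_{\bk}/\sigma)\right] = F_{\sigma}(\bk),
\]
the last series converging absolutely because $f^{1}(\eps_{n\bk}/\sigma)$ decays fast as $n \to \infty$ by \eqref{eq:comparison_eigenvalues}. I expect the only delicate point to be the residue computation: one must check simultaneously that $\sC$ separates $\Spec(H_{\bk})$ from the poles of $f^{1}(\cdot/\sigma)$ — immediate for the Gaussian-type functions and a short inequality for Fermi--Dirac — and that the contour can legitimately be closed at infinity, which is precisely where the distinction between the exponential decay of the Fermi--Dirac function and the super-exponential decay of the Gaussian-type functions has to be handled case by case. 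The Fubini interchange and the absolute-convergence bookkeeping are routine, relying only on the bounds already recorded in Lemma~\ref{lem:res_ineg} and in the proof of Lemma~\ref{lem:G_analytic}.
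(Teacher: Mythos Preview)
Your argument is correct and runs parallel to the paper's, but with the limiting operations taken in a different order. The paper first truncates $H_\bk$ to a finite-rank operator $H_\bk^Q = \sum_{n \le Q}\eps_{n\bk}|u_{n\bk}\rangle\langle u_{n\bk}|$, applies the Cauchy residue formula on a truncated (hence closed) contour $\sC_M$, lets $M \to \infty$ to recover the full parabola, and finally lets $Q \to \infty$ by dominated convergence. You instead expand the trace directly as the eigenvalue series, swap sum and contour integral by Fubini, and then close the contour term by term. Both schemes rest on the same two analytic inputs --- the absolute integrability of $\lambda f^{1}(\lambda/\sigma)(\lambda+\Sigma)^{2}$ along $\sC$ established in the proof of Lemma~\ref{lem:G_analytic}, and the decay of $f^{1}$ on the closing segment --- and both are rigorous. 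Your route is slightly more economical (one interchange rather than a double limit), while the paper's finite-rank truncation sidesteps the Fubini bookkeeping at the cost of an extra dominated-convergence step.
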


  \begin{proof}
  We first approximate $H_{\bk}$ by a finite-rank
  operator, then apply the Cauchy residual formula, and pass to the
  limit. Recall that $H_\bk$ is a self-adjoint operator with spectral
  decomposition~\eqref{eq:def:Hk}. For $Q \in \N^*$, we introduce
  the truncated operator
  \[
  H_\bk^Q := \sum_{n =1}^Q \varepsilon_{n \bk}  |
  u_{n\bk} \ket \bra u_{n\bk} |.
  \]
  We have
  \begin{align}
  \left| G_\sigma(\bk) - F_\sigma(\bk) \right| 
  \le & \left| G_\sigma(\bk) - \Tr_{L^2_\per} \left( H_\bk^Q f^\sigma (H_\bk^Q) \right) \right| \label{ineq:G-F1} \\
  & + \left|  \Tr_{L^2_\per} \left( H_\bk^Q f^\sigma (H_\bk^Q) \right) -  \Tr_{L^2_\per} \left( H_\bk f^\sigma (H_\bk) \right) \right|. \label{ineq:G-F2}
  \end{align}
  We first focus on~\eqref{ineq:G-F2}. Using the asymptotic~\eqref{eq:comparison_eigenvalues} and the decay properties of $f^\sigma$, it holds that
  \begin{equation} \label{eq:convTLtoT}
  \Tr_{L^2_\per} \left( H_\bk f^\sigma(H_\bk ) \right) - \Tr_{L^2_\per} \left(H_\bk^Q  f^\sigma(H_\bk^Q) \right) = \sum_{n \ge Q} \varepsilon_{n \bk} f^{\sigma}(\varepsilon_{n\bk}) \xrightarrow[Q \to \infty]{} 0.
  \end{equation}
  We now focus on the right-hand side of~\eqref{ineq:G-F1}. For $M \ge \varepsilon_{Q \bk} + 1$ we denote by $\sC_{M}$ the positively oriented {\em closed} contour defined by
  \[
  \sC_M := \left\{ \lambda \in \sC, \ \Re \lambda \le M \right\} \bigcup \left[  M + \ri \sigma \left( 1 + \frac{M}{\Sigma} \right),  M - \ri \sigma \left( 1 + \frac{ M}{\Sigma} \right)\right].
  \]
  The contour $\sC_M$ is obtained by truncating the parabola $\sC$ to the region
  $\Re \lambda \leq  M$ and closing the contour by a segment. For all $M \ge \varepsilon_{Q \bk} + 1$, this contour encloses the spectrum of
  $H_\bk^Q$, so that, from the Cauchy residual formula,
  \begin{align*}
  \Tr_{L^2_\per} \left( H_\bk^Q f^\sigma(H_\bk^Q) \right) = \frac{1}{2 \pi \ri} \oint_{\sC_M}  \left[f^\sigma(\lambda) \lambda(\lambda+ \Sigma)^2 \right] \Tr_{L^2_\per} \left( \dfrac{1}{(H_\bk^Q + \Sigma)^2}  \dfrac{1}{\lambda - H_\bk^Q} \right) \rd \lambda.
  \end{align*}
As $M \to \infty$, and using the same arguments as in the proofs of Lemmas \ref{lem:res_ineg} and \ref{lem:G_analytic}, we see that the right-hand side
  converges to the integral over the full contour $\sC$. Moreover, we have the point-wise convergence
  \begin{align*}
  \forall \lambda \in \sC, \quad & \lim_{Q \to \infty} \Tr_{L^2_\per} \left( \dfrac{1}{(H_\bk^Q + \Sigma)^2} \dfrac{1}{\lambda - H_\bk^Q} \right) 
  = \lim_{Q \to \infty} \sum_{n =1}^{Q} \frac{1}{(\eps_{n\bk} + \Sigma)^{2}(\lambda-\eps_{n\bk})} \\
  &\qquad =\sum_{n=1}^{\infty} \frac{1}{(\eps_{n\bk} + \Sigma)^{2}(\lambda-\eps_{n\bk})}
  = \Tr_{L^2_\per} \left( \dfrac{1}{(H_\bk + \Sigma)^2} \dfrac{1}{\lambda - H_\bk} \right).
  \end{align*}
  We conclude from the dominated convergence theorem that
  $\Tr_{L^2_\per} \left( H_\bk^Q f^\sigma(H_\bk^Q) \right)$ converges to $G_{\sigma}(\bk)$ as $Q \to \infty$. The proof of Lemma~\ref{lem:G_eq_F} follows.
  \end{proof}
  Combining Lemma~\ref{lem:G_eq_F} together with Lemma~\ref{lem:G_analytic} ends the proof of Lemma~\ref{lem:nearRealLine}.

  \subsection{Proof of Lemma~\ref{lem:farRealLine}}
  We now focus on Gaussian-type smearing functions. The idea of the proof is similar to previously. We need however to re-define $G_\sigma$ for large $| \by |$ (\ie choose an appropriate contour). In the sequel, we fix $Y \ge 1$ and provide estimates uniform in $\bz \in S_Y$.

  \medskip

Looking at the operator $-\frac 1 2 \Delta_{\bz}$ in Fourier basis, we see that its  spectrum is
  \[
  \sigma \left( -\tfrac12 \Delta_\bz \right) := \left\{  \tfrac 1 2(\bK + \bz)^{2} \right\}_{\bK \in \RLat} 
  \left\{ \frac 1 2 (|\bK + \bk|^{2} - |\by|^{2}) + \ri \by \cdot (\bK+\bk) \right\}_{\bK \in \RLat},
  \]
  hence is contained in the parabolic set $(\Im \lambda)^{2} \leq Y^{2}(2\Re \lambda + Y^{2})$. In the sequel, we take an parabolic integration contour that encloses this region. More specifically, for $\alpha > 1$  and $Y \ge 1$, we introduce the dilated contour
  \begin{align*}
  \sC_{\alpha,Y} := \left\{\lambda \in \CC, \ \Im \lambda^{2} = \alpha^{2} Y^{2}(2\Re \lambda + \alpha^{2} Y^{2})\right\}.
  \end{align*}
  As $\alpha$ increases, the distance between $\sC_{\alpha,Y}$ and $\sigma (-\tfrac12 \Delta_\bz)$ goes to $+ \infty$. Since the operator $-\tfrac12 \Delta_\bz$ is normal, this implies that there exists $\alpha_c \ge 1$ such that
  \[
  \forall Y \ge 1, \quad \forall \bz \in S_Y, \quad \forall \lambda \in \sC_{\alpha_c, Y}, \quad  \left\| (\lambda +\tfrac12 \Delta_\bz )^{-1} \right\|_\sB \le \left( 2 \| V \|_{L^\infty} \right)^{-1}.
  \]
The contour $\sC := \sC_{\alpha_c, Y}$ is our integration contour for $\bz \in S_Y$ (see Figure \ref{fig:contourFD_2}).
  \begin{figure}[H]
  \centering
  \begin{tikzpicture}
\draw[->] (-7.5, 0) -> (6, 0) node[right] {$\R$};
\draw[->] (-1.5, -2) -> (-1.5, 2) node[above] {$\ri \R$};

\draw[red, line width=3] (-4,0) -- (-1,0);
\draw[red, line width=3] (0,0) -- (2,0);
\draw[red, line width=3] (3,0) -- (5,0) node[below] {$\Spec(H)$};

\draw [blue, domain=0:3.3]  plot ({\x^2-5}, {\x/5} ) node[above
left] {$\Spec(-\tfrac 1 2 \Delta_{\bz})$};
\draw [blue, domain=0:3.3]  plot ({\x^2-5}, {-\x/5} );

\draw [black, fill] (0, 0.5) circle(2pt) ;     \draw [black, fill] (-4.3, 0.3) circle(2pt) ;     \draw [black, fill] (1, -0.4) circle(2pt) ;
\draw [black, fill] (-3, 0.1) circle(2pt) ;     \draw [black, fill] (3, 0.3) circle(2pt) ;     \draw [black, fill] (3.5, -1) circle(2pt) node[right] {$\Spec(H_\bz)$};


\draw [domain=0:3.23]  plot ({1.2*\x^2-1.1*6}, {1.4*\x/3} ) node[above right] {$\sC$};
\draw [domain=0:3.23]  plot ({1.2*\x^2-1.1*6}, {-1.4*\x/3} );

\draw[black,fill](-1.5,1) node[cross]{} node[above right]{$\ri \alpha_c^{2}Y^{2}$};
\draw[blue,fill](-1.5,.4) node[cross,blue]{} node[above right]{$\ri Y^{2}$};

\draw[black,fill](-1.1*6,0) node[cross]{} node[above]{$-\Sigma=-\tfrac{\alpha_c^{2}Y^{2}}2$};
\draw[blue,fill](-5,0) node[cross,blue]{} node[below left]{$-\tfrac{Y^{2}}2$};

\end{tikzpicture}
\caption{The spectra of the operator $-\tfrac12 \Delta_\bz$ and $H_{\bz}$, and the contour $\sC$
that encloses them.}
\label{fig:contourFD_2}
\end{figure}

For $\lambda \in \sC$, it holds that (compare with~\eqref{eq:lambda-Hz})
\begin{equation*}
\left( \lambda - H_{\bz}\right) = \left( \lambda + \tfrac 1 2 \Delta_{\bz} - V \right) = \left( \lambda + \tfrac12 \Delta_\bz \right) \left( 1 - (\lambda + \tfrac12 \Delta_\bz)^{-1} V \right).
\end{equation*}
As in Lemma~\ref{lem:res_ineg} we deduce the following inequalities. We do not repeat the proof, as it is similar. We denote by $\Sigma := \frac{\alpha_c^{2}Y^{2}}2$ for the sake of clarity.
\begin{lemma} \label{lem:boundRes2}
There exists $C \in \R_+$ such that, for all $Y \geq 1$, all $\bz \in S_Y$, and all $\lambda \in \sC$, it holds that
\begin{align*}{}
& \|(H_{\bz}-\lambda)^{-1}\|_{\sB} \leq C, 
\quad  
\|(H_{\bz}+\Sigma)^{-1}\|_{\fS_{2}} \leq C,\\
&  \Tr_{L^2_\per} \left( \dfrac{1}{(H_\bz + \Sigma)^2} \dfrac{1}{\lambda - H_\bz} \right) \leq C,
\quad \text{and} \quad
\partial_{\bz} \Tr_{L^2_\per} \left( \dfrac{1}{(H_\bz + \Sigma)^2} \dfrac{1}{\lambda - H_\bz} \right) \leq C.
\end{align*}
\end{lemma}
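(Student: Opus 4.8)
The plan is to follow the proof of Lemma~\ref{lem:res_ineg} almost verbatim, with the free operator $-\tfrac12\Delta_{\bz}$ playing the role that $H_{\bk}-\tfrac12|\by|^2$ played there. For $\bz=\bk+\ri\by\in S_Y$ and $Y\geq1$, the potential $V$ is a bounded perturbation of $-\tfrac12\Delta_{\bz}$, and by the choice of $\alpha_c$ recalled just above the statement one has $\|(\lambda+\tfrac12\Delta_{\bz})^{-1}\|_{\sB}\leq(2\|V\|_{L^\infty})^{-1}$ uniformly over $\lambda\in\sC$, $\bz\in S_Y$ and $Y\geq1$. Writing, as in \eqref{eq:lambda-Hz},
\[
\lambda-H_{\bz}=\left(\lambda+\tfrac12\Delta_{\bz}\right)\left(1-\left(\lambda+\tfrac12\Delta_{\bz}\right)^{-1}V\right),
\]
the second factor is inverted by a Neumann series of $\sB$-norm at most $2$, which gives the first estimate with $C=\|V\|_{L^\infty}^{-1}$.

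For the Hilbert--Schmidt bound, one observes that the real vertex $-\Sigma$ of the parabola $\sC$ lies on $\sC$, so the above applies at $\lambda=-\Sigma$, yielding $\|(-\tfrac12\Delta_{\bz}+\Sigma)^{-1}\|_{\sB}\leq(2\|V\|_{L^\infty})^{-1}$; hence $H_{\bz}+\Sigma=(-\tfrac12\Delta_{\bz}+\Sigma)(1+(-\tfrac12\Delta_{\bz}+\Sigma)^{-1}V)$ with the bracket invertible of norm $\leq2$, so $\|(H_{\bz}+\Sigma)^{-1}\|_{\fS_2}\leq2\,\|(-\tfrac12\Delta_{\bz}+\Sigma)^{-1}\|_{\fS_2}$. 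Since $-\tfrac12\Delta_{\bz}$ is diagonal in the plane-wave basis with eigenvalues $\tfrac12(\bK+\bz)^2$, $\bK\in\RLat$,
\[
\left\|(-\tfrac12\Delta_{\bz}+\Sigma)^{-1}\right\|_{\fS_2}^2=\sum_{\bK\in\RLat}\left|\tfrac12(\bK+\bz)^2+\Sigma\right|^{-2},
\]
and the elementary bound $|\tfrac12(\bK+\bz)^2+\Sigma|\geq\Re(\tfrac12(\bK+\bz)^2+\Sigma)=\tfrac12|\bK+\bk|^2-\tfrac12|\by|^2+\Sigma\geq\tfrac12|\bK+\bk|^2+\tfrac12(\alpha_c^2-1)Y^2$ (using $|\by|\leq Y$ and $\alpha_c>1$) shows, for $Y\geq1$, that the series is dominated by $\sum_{\bK\in\RLat}(\tfrac12|\bK+\bk|^2+\tfrac12(\alpha_c^2-1))^{-2}$, which converges for $d\leq3$ and is bounded uniformly in $\bk\in\BZ$. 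The third estimate then follows from $|\Tr(B^2A)|\leq\|B\|_{\fS_2}^2\|A\|_{\sB}$ with $B=(H_{\bz}+\Sigma)^{-1}$ and $A=(\lambda-H_{\bz})^{-1}$, exactly as in Lemma~\ref{lem:res_ineg}.

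For the last estimate I would differentiate using $\partial_{\bz}(H_{\bz}-\mu)^{-1}=-(H_{\bz}-\mu)^{-1}(-\ri\nabla+\bz)(H_{\bz}-\mu)^{-1}$; since $\Sigma$ is independent of $\bz$, $\partial_{\bz}\Tr\big((H_{\bz}+\Sigma)^{-2}(\lambda-H_{\bz})^{-1}\big)$ is a finite sum of traces of operator products in which, inserting one extra factor $(-\ri\nabla+\bz)$ between two resolvents and using cyclicity of the trace, one can always isolate two Hilbert--Schmidt factors $(H_{\bz}+\Sigma)^{-1}$ and leave bounded factors of the form $(\lambda-H_{\bz})^{-1}$, $(-\ri\nabla+\bz)(H_{\bz}+\Sigma)^{-1}$, $(-\ri\nabla+\bz)(\lambda-H_{\bz})^{-1}$, and their formal adjoints. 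The uniform boundedness of these last operators over $\bz\in S_Y$, $\lambda\in\sC$, $Y\geq1$ is obtained from the Neumann factorization above together with the Fourier-side bounds $\sup_{\bK\in\RLat}|\bK+\bz|\,|\tfrac12(\bK+\bz)^2+\Sigma|^{-1}\leq C$ and $\sup_{\bK\in\RLat}|\bK+\bz|\,|\tfrac12(\bK+\bz)^2-\lambda|^{-1}\leq C$, the analogues of \eqref{res_times_nabla}; the second is the one genuinely requiring care, since the spectrum of $-\tfrac12\Delta_{\bz}$ lies in the parabolic region $(\Im w)^2\leq Y^2(2\Re w+Y^2)$ while $\lambda$ runs over the dilated parabola $\sC=\sC_{\alpha_c,Y}$, so that $|\tfrac12(\bK+\bz)^2-\lambda|$ stays a definite relative distance away; a parametrization of $\sC$ as in the proof of Lemma~\ref{lem:G_analytic} makes this quantitative.

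The main obstacle is the uniformity in $Y$: both the contour $\sC_{\alpha_c,Y}$ and the shift $\Sigma=\alpha_c^2Y^2/2$ scale like $Y^2$, and the estimates work only because this scaling exactly compensates the $-\tfrac12|\by|^2\geq-\tfrac12Y^2$ term in the real part of the spectrum of $-\tfrac12\Delta_{\bz}$, keeping all Fourier-side denominators bounded below by $c\,(1+|\bK+\bk|^2)$ with $c>0$ independent of $Y$ (the strict dilation $\alpha_c>1$ and the restriction $Y\geq1$ supply the margin). Once these uniform operator bounds are in hand, the four inequalities follow as in Lemma~\ref{lem:res_ineg}; the corresponding estimate on $F^{\rho,W}_{\varepsilon,\sigma}$ needed for Lemma~\ref{lem:farRealLine} is obtained by the same scheme with $G_{\sigma}$ replaced by the weighted contour integral $G^W_{\sigma}$ of the footnote, using \eqref{eq:growth_Wnk} to control $\|u_{n\bk}\|_{H^{s+2}_{\per}}$ and hence the relevant trace-class norms.
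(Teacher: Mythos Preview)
Your proposal is correct and follows exactly the approach the paper indicates: adapt the proof of Lemma~\ref{lem:res_ineg} with the free operator $-\tfrac12\Delta_{\bz}$ in place of $H_{\bk}-\tfrac12|\by|^2$, the paper itself omitting all details with the remark that the argument ``is similar''. You in fact supply more detail than the paper does, and correctly isolate the key mechanism---that $\Sigma=\tfrac12\alpha_c^2Y^2$, the contour $\sC_{\alpha_c,Y}$, and the shift $\tfrac12|\by|^2$ all scale like $Y^2$, so the Fourier-side denominators stay bounded below by $c(1+|\bK+\bk|^2)$ uniformly in $Y\geq1$ (one minor caveat: since $S_Y$ uses $|\by|_\infty\leq Y$, one has $|\by|^2\leq dY^2$ rather than $Y^2$, but this is absorbed by taking $\alpha_c$ large enough, which the paper's construction already allows).
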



We now define, for $0<\sigma \le \sigma_0$ and $Y \ge 1$, the function defined on $S_Y$ by
\begin{equation}  \label{eq:def:GsigmaY}
G_{\sigma,Y}(\bz) := \dfrac{1}{2 \pi \ri} \int_{\sC} \left[\lambda f^{1}(\lambda/\sigma)(\lambda + \Sigma)^2 \right] \Tr_{L^2_\per} \left( \dfrac{1}{(H_\bz + \Sigma)^2} \dfrac{1}{\lambda - H_\bz} \right) \rd \lambda.
\end{equation}
Before stating a bound on $G_{\sigma,Y}$, we need the following
technical lemma on the growth of $f^{1}$:
\begin{lemma}[Growth of $f^{1}$ in the complex plane] \label{lem:propf1} When $f^{1}$ is one of the
Gaussian-type smearing functions
(\ref{smearing_function_1}$''$-\ref{smearing_function_1}$'''$),
then $f^1$ is entire, and there exists $C \in \R_{+}$ and $q \ge 0$ such that
\begin{equation} \label{eq:ineq_erfc}
\forall x,y \in \R, \quad \left| f^1(x + \ri y) \right| \le
\left\{ \begin{aligned}
& C (1+(x^2 + y^2)^q) \left(\re^{ y^2 - x^2} \right) \quad \text{if} \quad x \ge 0, \\
& C (1+(x^2 + y^2)^q)  \left( 1 + \re^{ y^2  - x^2} \right) \quad \text{if} \quad x < 0.
\end{aligned} \right.
\end{equation}
\end{lemma}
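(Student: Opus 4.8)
The plan is to reduce everything to the complementary error function, since each Gaussian-type smearing function differs from $f^1_{\rm G}$ only by a polynomial times a Gaussian $\re^{-z^2}$, and then to establish a two-regime bound on $\erf$ (or equivalently $\operatorname{erfc}$) on the whole complex plane. First I would write, for $z = x + \ri y$,
\[
f^1_{\rm G}(z) = \tfrac12\bigl(1 - \erf(z)\bigr) = \tfrac12 \operatorname{erfc}(z),
\qquad \operatorname{erfc}(z) = \frac{2}{\sqrt\pi}\int_z^\infty \re^{-t^2}\,\rd t,
\]
and recall the standard scaled complementary error function $w(z) := \re^{-z^2}\operatorname{erfc}(-\ri z)$ (the Faddeeva function), which is entire and \emph{bounded} on the closed upper half-plane $\{\Im z \ge 0\}$, with $|w(z)| \le 1$ there, and satisfies $|w(z)| \le C/(1+|z|)$ for $\Im z \ge 0$. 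Translating this back, for $x \ge 0$ one has $\operatorname{erfc}(z) = \re^{-z^2} w(\ri z)$ with $\ri z$ in the closed upper half-plane (since $\Re z \ge 0$), and $|\re^{-z^2}| = \re^{y^2 - x^2}$, giving $|\operatorname{erfc}(z)| \le \re^{y^2 - x^2}$; for $x < 0$ I use the reflection formula $\operatorname{erfc}(z) = 2 - \operatorname{erfc}(-z)$ and apply the previous bound to $-z$ (whose real part is now nonnegative), yielding $|\operatorname{erfc}(z)| \le 2 + \re^{y^2 - x^2}$. This is exactly the claimed dichotomy for $f^1_{\rm G}$, with polynomial prefactor trivially $1$ (i.e.\ $q = 0$).

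Next I would handle the extra terms. For the Methfessel--Paxton function $f^1_{{\rm MP},N}$ and the cold-smearing function $f^1_{\rm cs}$, the correction to $f^1_{\rm G}$ is of the form $P(z)\re^{-z^2}$ where $P$ is a fixed polynomial (a combination of Hermite polynomials of degree at most $2N-1$, resp.\ degree $2$). Since $|\re^{-z^2}| = \re^{y^2 - x^2}$ and $|P(x + \ri y)| \le C(1 + (x^2 + y^2)^q)$ for $q = \deg P$, this contributes a term bounded by $C(1 + (x^2+y^2)^q)\,\re^{y^2 - x^2}$, which is absorbed into the right-hand side of \eqref{eq:ineq_erfc} in both the $x \ge 0$ and $x < 0$ cases (noting $\re^{y^2 - x^2} \le 1 + \re^{y^2 - x^2}$). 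Combining the bound on $f^1_{\rm G}$ with the bound on the polynomial-times-Gaussian correction, and taking $q$ to be the maximal degree appearing and $C$ the largest constant, gives \eqref{eq:ineq_erfc} for all the Gaussian-type smearing functions simultaneously.

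The only mildly delicate point is justifying the uniform bound on the Faddeeva function $w$ on the closed upper half-plane; I would either cite the classical estimate (e.g.\ from Abramowitz--Stegun or the analysis of $w(z)$ via its integral representation $w(z) = \frac{\ri}{\pi}\int_{\R} \frac{\re^{-t^2}}{z - t}\,\rd t$ for $\Im z > 0$, which immediately gives $|w(z)| \le \frac{1}{\pi}\int_\R \frac{\re^{-t^2}}{|\Im z - \ri(\Re z - t)|}$, hence both the $O(1)$ and the $O(|z|^{-1})$ decay by splitting the integral), or prove it directly in a short paragraph. Everything else is bookkeeping: the real work is the single complex-analytic input that $\re^{-z^2}$ controls $\operatorname{erfc}(z)$ in the right half-plane up to a bounded factor, and I expect that verification of the Faddeeva bound — and being careful about which half-plane the rotation $z \mapsto \ri z$ or $z \mapsto -z$ lands in — to be the main obstacle, such as it is.
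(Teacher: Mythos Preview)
Your argument is correct and takes a genuinely different route from the paper. The paper proceeds by a direct contour computation: it writes
\[
f^1_{\rm G}(x+\ri y) = f^1_{\rm G}(x) + \frac{\ri\,\re^{-x^2}}{\sqrt\pi}\int_0^y \re^{-2\ri x t}\,\re^{t^2}\,\rd t,
\]
bounds $|f^1_{\rm G}(x)|$ separately for $x\ge 0$ (by $C\re^{-x^2}$) and $x<0$ (by $1$), and bounds the second term crudely by $C\re^{-x^2}\re^{y^2}$. You instead invoke the Faddeeva function $w(z)=\re^{-z^2}\operatorname{erfc}(-\ri z)$ and its boundedness on the closed upper half-plane, which packages the same analytic content more structurally via $\operatorname{erfc}(z)=\re^{-z^2}w(\ri z)$ together with the reflection $\operatorname{erfc}(z)=2-\operatorname{erfc}(-z)$. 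Both approaches treat the polynomial-times-Gaussian corrections identically.

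What each buys: your approach is cleaner and reduces the problem to a single classical fact about $w$, at the cost of either citing that fact or proving it; the paper's approach is entirely self-contained and elementary but slightly longer. Two small cautions on your side: the precise claim $|w(z)|\le 1$ on $\{\Im z\ge 0\}$ is stronger than you need (any constant suffices for the lemma), so if you prove it yourself you need only show boundedness; and your suggested justification via the Cauchy-type integral $w(z)=\frac{\ri}{\pi}\int_\R \frac{\re^{-t^2}}{z-t}\,\rd t$ gives only $|w(z)|\le (\sqrt\pi\,\Im z)^{-1}$, which degenerates at the real axis, so you will need to supplement it with the boundedness of $w$ on $\R$ (equivalently, of the Dawson function) and a continuity or Phragm\'en--Lindel\"of argument on the strip $0\le \Im z\le 1$. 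You already flag this as the main point of care, and it is indeed routine.
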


\begin{proof}
We first handle the case when $f^{1}$ is the Gaussian smearing function (in which case we can choose $q = 0$). We have
\begin{equation*}
f^1(x) = \frac12 \left( 1 - {\rm erf} \left( x \right) \right) =
\frac{1}{\sqrt{\pi}} \int_{x}^\infty \re^{-t^2} \rd t.
\end{equation*}
First recall that for $x \in \R$, we have $0 < f^1(x) < 1$. Moreover, for $x \ge 1$, it holds that
\[
0 \le f^1(x) \le \ \frac{1}{\sqrt{\pi}} \int_{x}^\infty t \re^{-t^2} \rd t = \frac{1}{2 \sqrt{\pi}} \re^{-x^2}.
\]
This already proves~\eqref{eq:ineq_erfc} for $y = 0$. 
Then, we notice that an analytic continuation of $f^1$ is given by
\begin{equation*}
f^1(x + \ri y) = \dfrac{1}{\sqrt{\pi}} \left( \int_x^\infty \re^{-t^2} \rd t + \ri
\int_{0}^{y} \re^{-(x + \ri t)^2} \rd t\right) = f^1(x) + \ri \dfrac{\re^{-x^2}}{\sqrt{\pi}} \int_0^y \re^{- 2 \ri x t} \re^{t^2} \rd t.
\end{equation*}
Lemma~\ref{lem:propf1} then follows from the inequalities
\[
\left| f^1(x + \ri y) \right| \le \left| f^1(x) \right| +  \dfrac{\re^{-x^2}}{\sqrt{\pi}} \int_{[0,y]}  \re^{t^2} \rd t,
\]
together with the fact that
\[
\int_{[0,y]}  \re^{t^2} \rd t = \int_{[0,1]} \re^{t^2} \rd t + \int_{[1,y]} \re^{t^2} \rd t  \le \int_{[0,1]} \re^{t^2} \rd t + \int_{[1,y]} t \re^{t^2} \rd t \le C \re^{y^2}.
\]
The case of the Methfessel-Paxton and cold smearing schemes follows immediately by noting
that they differ from the Gaussian smearing function by terms of the
form $x^n \re^{-x^{2}}$, and the fact that
\[
\left| z^n \re^{-z^2} \right| = ( x^2 + y^2 )^{n/2} \re^{y^2 - x^2}.
\]
\end{proof}

We are now in position to prove estimates on $G_{\sigma, Y}$ defined on~\eqref{eq:def:GsigmaY}.

\begin{lemma} \label{lem:GsigmaY}
There exists $C \in \R_+$ and $\eta > 0$ such that, for all
$0 < \sigma \leq \sigma_{0}$ and all $Y \geq 1$, the function $G_{\sigma,Y}$ is analytic on $S_Y$, and
\begin{align*}
\sup_{\bz \in S_{Y}} |G_{\sigma,Y}(\bz)| \leq C \re^{\eta \frac{Y^{4}}{\sigma^{2}}}.
\end{align*}

\end{lemma}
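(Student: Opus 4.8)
The plan is to mimic the proof of Lemma~\ref{lem:G_analytic} on the dilated contour $\sC = \sC_{\alpha_c,Y}$, the only genuinely new ingredient being the growth of $f^1$ along this contour, which is controlled by Lemma~\ref{lem:propf1}. Analyticity of $G_{\sigma,Y}$ on $S_Y$ is immediate from Lemma~\ref{lem:boundRes2}: for each $\lambda \in \sC$ the map $\bz \mapsto \Tr_{L^2_\per}\bigl((H_\bz+\Sigma)^{-2}(\lambda-H_\bz)^{-1}\bigr)$ is holomorphic on $S_Y$ with a $\bz$-derivative bounded by $C$ uniformly in $\lambda \in \sC$, so $G_{\sigma,Y}$ is holomorphic by Morera's theorem (the integral in~\eqref{eq:def:GsigmaY} converges locally uniformly in $\bz$ by the same contour estimates used below for the bound).

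For the quantitative bound I would first invoke the third inequality of Lemma~\ref{lem:boundRes2} to remove the $\bz$-dependence:
\[
|G_{\sigma,Y}(\bz)| \le \frac{C}{2\pi}\int_{\sC} \bigl|\lambda\, f^1(\lambda/\sigma)\bigr|\,|\lambda+\Sigma|^2\,|\rd\lambda|.
\]
Then I would parametrize the parabola explicitly. With $\Sigma = \tfrac{\alpha_c^2 Y^2}{2}$, one checks that $\lambda(t) = \Sigma(t+\ri)^2$, $t \in \RR$, traces $\sC$, and that $|\lambda(t)| = \Sigma(t^2+1)$, $|\lambda(t)+\Sigma| = \Sigma|t|\sqrt{t^2+4}$ and $|\rd\lambda| = 2\Sigma\sqrt{t^2+1}\,\rd t$, so that the integrand above is bounded by $C\,\Sigma^4(1+|t|)^7\,|f^1(\lambda(t)/\sigma)|\,\rd t$.

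The heart of the matter is estimating $|f^1(\lambda(t)/\sigma)|$. Writing $\lambda(t)/\sigma = x+\ri y$ with $x = (\Sigma/\sigma)(t^2-1)$ and $y = 2(\Sigma/\sigma)t$, one computes $x^2+y^2 = (\Sigma/\sigma)^2(t^2+1)^2$ and, crucially, $y^2-x^2 = (\Sigma/\sigma)^2(-t^4+6t^2-1)$, whose maximum over $t \in \RR$ equals $8(\Sigma/\sigma)^2$ (attained at $t^2=3$). Plugging this into Lemma~\ref{lem:propf1} — using the sharper $x\ge 0$ estimate on $\{|t|\ge 1\}$, which is exactly the set where $x\ge 0$, and the $x<0$ estimate on the compact set $\{|t|\le 1\}$ — and splitting $\{|t|\ge 1\}$ once more into a bounded piece and the tail $\{t^4\ge 12\}$, on which $-t^4+6t^2-1\le -\tfrac12 t^4$ so that the factor $\re^{-\frac12(\Sigma/\sigma)^2 t^4}$ dominates every polynomial weight after the rescaling $s = (\Sigma/\sigma)^{1/2}t$, one obtains
\[
\int_{\sC} \bigl|\lambda\, f^1(\lambda/\sigma)\bigr|\,|\lambda+\Sigma|^2\,|\rd\lambda| \le C\,\Sigma^4\bigl(1+(\Sigma/\sigma)^{2q}\bigr)\,\re^{8(\Sigma/\sigma)^2}.
\]
Substituting $\Sigma = \tfrac{\alpha_c^2 Y^2}{2}$ turns the right-hand side into $C\,Y^8\bigl(1+Y^{4q}\sigma^{-2q}\bigr)\re^{2\alpha_c^4 Y^4/\sigma^2}$; since $Y\ge 1$, $0<\sigma\le\sigma_0$ and $\alpha_c\ge 1$, both prefactors $Y^8$ and $(Y^2/\sigma)^{2q}$ are bounded by $C_\epsilon\re^{\epsilon Y^4/\sigma^2}$ for any $\epsilon>0$, and taking $\eta := 2\alpha_c^4+1$ yields $\sup_{\bz\in S_Y}|G_{\sigma,Y}(\bz)| \le C\re^{\eta Y^4/\sigma^2}$.

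The main obstacle — really the only non-routine point — is the contour-integral estimate, and within it the computation that $y^2-x^2 \le 8(\Sigma/\sigma)^2$ along $\sC$: this is what produces the $Y^4/\sigma^2$ in the exponent. The rest (the rescaling argument killing the polynomial weights at infinity, and absorbing $Y^8$ and $(Y^2/\sigma)^{2q}$ into a slightly larger $\eta$) is bookkeeping that uses only $Y\ge 1$, $\sigma\le\sigma_0$ and $\alpha_c\ge 1$.
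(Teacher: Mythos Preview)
Your proof is correct and follows essentially the same route as the paper: the same parametrization $\lambda(t)=\Sigma(t^2-1+2it)$ (your $\Sigma(t+i)^2$ is just a tidier way to write it), the same key computation $y^2-x^2=(\Sigma/\sigma)^2(-t^4+6t^2-1)$, and the same splitting into a bounded piece and a Gaussian-decaying tail. You are also slightly more complete than the paper in that you track the general $q$ from Lemma~\ref{lem:propf1} and explicitly absorb the $Y^8$ and $(Y^2/\sigma)^{2q}$ prefactors into a slightly larger $\eta$, whereas the paper only writes out the $q=0$ case.

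One small arithmetic slip: on the tail $\{t^4\ge 12\}$, i.e.\ $t^2\ge 2\sqrt{3}$, the inequality $-t^4+6t^2-1\le -\tfrac12 t^4$ is \emph{not} yet valid (it fails for $2\sqrt{3}\le t^2 < 6+\sqrt{34}$). Replace the threshold by, say, $t^2\ge 12$, where $-t^4+6t^2-1 = -\tfrac12 t^4 - \tfrac12(t^2-6)^2 + 17 \le -\tfrac12 t^4 + 17$ (or even more simply $t^2(t^2-12)+2>0$ gives exactly your claimed bound). The bounded piece then becomes $\{1\le|t|\le\sqrt{12}\}$, and the rest of your argument goes through unchanged.
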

\begin{proof}
For the sake of clarity, we do the proof when $f^1$ is the Gaussian smearing function (\ie $q = 0$). From Lemma~\ref{lem:boundRes2}, it holds that
\begin{align*}
|G_{\sigma,Y}(\bz)| &\leq C \int_{\sC} | \lambda f^{1}(\lambda/\sigma) (\lambda + \Sigma)^2| |\rd \lambda|.
\end{align*}
We parametrize the contour $\sC$ with $\lambda(t) := \tfrac{\alpha_c^{2} Y^{2}}{2} ( (t^{2}-1) + \ri 2t)$, so that (compare with~\eqref{eq:estimatelambda})
\[
| \lambda | \le C Y^2 (t^2 + 1 ) \quad \text{and} \quad | \rd \lambda | \le C Y^2 (| t | + 1) \rd t.
\]
Then,
\begin{align*}
\left| G_{\sigma,Y}(\bz) \right| &\leq C Y^{8} \int_{\R} \left|f^{1} \left(\tfrac{\alpha_c^{2}Y^{2}}{2 \sigma} \left(t^{2}-1+\ri 2t\right)\right)\right|(1+|t|^{7}) \rd t.
\end{align*}
We split this integral in regions where $| t | \ge 1$, and $| t | \le 1$. When $| t | \le 1$, it holds that $\Re \lambda(t) \le 0$. Together with the second inequality of Lemma~\ref{lem:propf1} and the inequalities
\[
	\forall -1 \le t \le 1, \quad (1 + | t |^7) \le 2, \quad \text{and} \quad 4t^2 - (t^2 - 1)^2 \le 4, 
\] 
we get
\begin{align*}
\int_{[-1, 1]} \left|f^{1} \left(\tfrac{\alpha_c^{2}Y^{2}}{2 \sigma} \left(t^{2}-1+\ri 2 t\right)\right)\right|(1+|t|^{7}) \rd t 
& \le 2 C \int_{[-1,1]} \left[ 1 + \exp\left( \tfrac{\alpha_c^4 Y^4}{4 \sigma^2}(4 t^2 - (t^2 - 1)^2) \right) \right] \rd t  \\
&  \le 4 C \left[ 1 + \re^{\frac{\alpha_c^4 Y^4}{\sigma^2} } \right] \le 8C \re^{\frac{\alpha_c^4 Y^4}{\sigma^2}}.
\end{align*}
For $| t | \ge 1$, we use the first inequality of Lemma~\ref{lem:propf1}, and obtain
\begin{align*}
& \int_{[-1, 1]^c} \left|f^{1} \left(\tfrac{\alpha_c^{2}Y^{2}}{2\sigma} \left(t^{2}-1+\ri 2 t\right)\right)\right|(1+|t|^{7}) \rd t 
\le 2C \int_1^\infty \exp\left( \tfrac{\alpha_c^4 Y^4}{4 \sigma^2}(4 t^2 - (t^2-1)^2)  \right) (1 + | t |^7)\rd t.
\end{align*}
When $t \ge 1$, we have the inequalities
\[
(1 + | t |^7) \le 2 | t |^7 \quad \text{while} \quad 6 t^2 \le \frac12(t^4 + 6^2)
\quad \text{so that} \quad
4 t^2 -  (t^2 - 1)^2 \le  - \tfrac{t^4}{2} + 17.
\]
 As a result,
\[
\int_1^\infty \exp\left( \tfrac{\alpha_c^4 Y^4}{4 \sigma^2}(4 t^2 - (t^2-1)^2)  \right) (1 + | t |^7)\rd t \le  2 \exp\left( \tfrac{17 \alpha_c^4 Y^4}{4 \sigma^2 } \right) \int_0^\infty  \exp\left( \tfrac{- \alpha_c^4 Y^4}{8\sigma^2}t^4 \right)  t^7 \rd t.
\]
We finally perform the change of variable $u = \frac{\alpha_c Y}{8^{1/4} \sqrt{\sigma}}t$ and get
\[
\int_0^\infty  \exp\left( \tfrac{- \alpha_c^4 Y^4}{8\sigma^2}t^4 \right)  t^7 \rd t = \left( \frac{8^{1/4} \sqrt{\sigma}}{\alpha_c Y} \right)^8 \int_0^\infty \re^{-u^4} u^7 \rd u,
\]
where the right-hand side is uniformly bounded for $0<\sigma \le \sigma_0$ and $Y \ge 1$. Combining all the inequalities concludes the proof of Lemma~\ref{lem:GsigmaY}
\end{proof}

Lemma~\ref{lem:farRealLine} is a consequence of the Lemma~\ref{lem:GsigmaY} and the following one, whose proof is similar to the one of Lemma~\ref{lem:G_eq_F}
\begin{lemma}
\label{lem:G_eq_F_superexp}
For all $0 < \sigma \leq \sigma_{0}$, all $Y \geq 1$,  and all $\bk \in \R^d$, it holds that $G_{\sigma,Y}(\bk) = F_{\sigma}(\bk)$.
\end{lemma}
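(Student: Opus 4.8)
The plan is to repeat, with the obvious modifications, the proof of Lemma~\ref{lem:G_eq_F}, the only differences being that the parabola $\eqref{eq:def:sC}$ is replaced by the dilated parabola $\sC=\sC_{\alpha_c,Y}$ entering the definition $\eqref{eq:def:GsigmaY}$ of $G_{\sigma,Y}$, that $\Sigma$ now denotes $\tfrac{\alpha_c^2Y^2}{2}$, and that $f^1$ is now one of the entire Gaussian-type functions, so that no poles have to be avoided by the contour. Fix $0<\sigma\le\sigma_0$, $Y\ge1$ and $\bk\in\R^d$. Recall $F_\sigma(\bk)=\Tr_{L^2_\per}[H_\bk f^1(H_\bk/\sigma)]=\sum_{n\ge1}\varepsilon_{n\bk}f^1(\varepsilon_{n\bk}/\sigma)$, a series that converges absolutely: by Lemma~\ref{lem:propf1} at $y=0$, $|f^1(x/\sigma)|$ decays faster than any inverse power of $x$ as $x\to+\infty$, while $\varepsilon_{n\bk}\to+\infty$ by $\eqref{eq:comparison_eigenvalues}$.

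First I would introduce the finite-rank truncation $H_\bk^Q:=\sum_{n=1}^Q\varepsilon_{n\bk}\,|u_{n\bk}\rangle\langle u_{n\bk}|$ (the compression of $H_\bk$ to $\mathrm{span}(u_{1\bk},\dots,u_{Q\bk})$) and bound
\[
|G_{\sigma,Y}(\bk)-F_\sigma(\bk)|\le\bigl|G_{\sigma,Y}(\bk)-\Tr_{L^2_\per}\bigl(H_\bk^Q f^1(H_\bk^Q/\sigma)\bigr)\bigr|+\Bigl|\sum_{n>Q}\varepsilon_{n\bk}f^1(\varepsilon_{n\bk}/\sigma)\Bigr|,
\]
the last term tending to $0$ as $Q\to\infty$ by absolute convergence. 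For the first term, for $M>\max_{1\le n\le Q}\varepsilon_{n\bk}$ I would close $\sC$ at $\Re\lambda=M$ by a vertical segment to form a closed contour $\sC_M$: since $\alpha_c$ was chosen so that the vertex $-\Sigma$ of $\sC$ lies strictly left of $\Spec(H_\bz)$ and $\sC$ encircles it, $\sC_M$ encloses the nonzero spectrum $\{\varepsilon_{n\bk}\}_{1\le n\le Q}$ of $H_\bk^Q$. Restricting to the finite-dimensional range of $H_\bk^Q$ (on its kernel the scalar integrand $\lambda f^1(\lambda/\sigma)(\lambda+\Sigma)^2\Sigma^{-2}\lambda^{-1}$ is holomorphic inside $\sC_M$ and contributes nothing), the residue theorem gives
\[
\Tr_{L^2_\per}\bigl(H_\bk^Q f^1(H_\bk^Q/\sigma)\bigr)=\frac{1}{2\pi\ri}\oint_{\sC_M}\bigl[\lambda f^1(\lambda/\sigma)(\lambda+\Sigma)^2\bigr]\,\Tr_{L^2_\per}\Bigl(\frac{1}{(H_\bk^Q+\Sigma)^2}\,\frac{1}{\lambda-H_\bk^Q}\Bigr)\,\rd\lambda,
\]
the residue of the integrand at $\varepsilon_{n\bk}$ being $\varepsilon_{n\bk}f^1(\varepsilon_{n\bk}/\sigma)$ (the factors $(\lambda+\Sigma)^2$ and $(H_\bk^Q+\Sigma)^{-2}$ cancelling there). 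Then I would let $M\to\infty$: along $\sC$ one has $(\Im\lambda)^2=\alpha_c^2Y^2(2\Re\lambda+\alpha_c^2Y^2)$, hence $(\Im\lambda)^2-(\Re\lambda)^2\to-\infty$, and combining the growth bound of Lemma~\ref{lem:propf1} with the uniform (in $Q$) versions of the resolvent estimates of Lemma~\ref{lem:boundRes2} and $\eqref{eq:comparison_eigenvalues}$, the integral over the vertical segment, whose length grows only like $\sqrt M$, is $O\bigl(\mathrm{poly}(M)\,\re^{-(M^2-CM)/\sigma^2}\bigr)\to0$; so the right-hand side converges to the integral over the full parabola, i.e. to $G_{\sigma,Y}(\bk)$ with $H_\bk^Q$ in place of $H_\bk$. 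Finally, for each fixed $\lambda\in\sC$ the pointwise limit $\sum_{n=1}^Q\frac{1}{(\varepsilon_{n\bk}+\Sigma)^2(\lambda-\varepsilon_{n\bk})}\to\Tr_{L^2_\per}\bigl(\frac{1}{(H_\bk+\Sigma)^2}\frac{1}{\lambda-H_\bk}\bigr)$ holds with a $Q$-uniform dominating function coming from $\eqref{eq:comparison_eigenvalues}$ and Lemma~\ref{lem:boundRes2}, so dominated convergence gives $\Tr_{L^2_\per}(H_\bk^Q f^1(H_\bk^Q/\sigma))\to G_{\sigma,Y}(\bk)$. Combining the three limits yields $G_{\sigma,Y}(\bk)=F_\sigma(\bk)$.

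The step I expect to be the main obstacle is the uniform-in-$M$ control of the closing segment at $\Re\lambda=M$: one must verify that the quartic-in-$Y$ decay of $f^1(\cdot/\sigma)$ \emph{along the parabola} $\sC$ — the mechanism that made Lemma~\ref{lem:GsigmaY} work — still dominates on this vertical segment, where $\Im\lambda$ sweeps an interval of length $\sim\sqrt M$ and the Gaussian growth $\re^{(\Im\lambda)^2/\sigma^2}$ of $f^1$ off the real axis is largest; the saving is that there $(\Im\lambda)^2=O(M)$ while $(\Re\lambda)^2=M^2$, so the factor $\re^{-M^2/\sigma^2}$ wins for $M$ large. A secondary technical point, exactly as in the proof of Lemma~\ref{lem:G_eq_F}, is that $\Tr_{L^2_\per}\bigl(\frac{1}{(H_\bk^Q+\Sigma)^2}\frac{1}{\lambda-H_\bk^Q}\bigr)$ must be understood as the trace over the finite-dimensional range of $H_\bk^Q$, so that the residue theorem genuinely applies.
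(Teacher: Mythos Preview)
Your proposal is correct and follows exactly the approach the paper indicates: the paper does not give an independent proof but simply states that the argument is ``similar to the one of Lemma~\ref{lem:G_eq_F}'', and your write-up carries out precisely that adaptation, with the contour $\sC$ replaced by $\sC_{\alpha_c,Y}$, the shift $\Sigma$ by $\tfrac{\alpha_c^2Y^2}{2}$, and the decay of $f^1$ along the closing segment now supplied by Lemma~\ref{lem:propf1} rather than by the Fermi--Dirac asymptotics. Your identification of the two technical points (control of the vertical closing segment and the interpretation of the trace for the finite-rank truncation) is apt and in fact makes explicit details that the paper's proof of Lemma~\ref{lem:G_eq_F} itself glosses over.
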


\section*{Acknowledgements}

The authors are grateful to Gus Hart, Volker Blum and Nicola Marzari for interesting discussions. This work was supported in part by ARO MURI Award W911NF-14-1-0247. This project has received funding from the European Research Council (ERC) under the European Union’s Horizon 2020 research and innovation programme (grant agreement No 810367).


\bibliography{kpts}
\bibliographystyle{plain}
{\footnotesize

\begin{tabular}{rl}
(E. Canc\`es) & \textsc{Universit\'e Paris-Est, CERMICS (ENPC)} \\
 &  F-77455 Marne-la-Vall\'ee, France\\
 &  \textsl{E-mail address}: \href{mailto:eric.cances@enpc.fr}{\texttt{eric.cances@enpc.fr}} \\
 \\
(V. Ehrlacher) & \textsc{Universit\'e Paris-Est, CERMICS (ENPC)} \\
 &  F-77455 Marne-la-Vall\'ee, France\\
 &  \textsl{E-mail address}: \href{mailto:virginie.ehrlacher@enpc.fr}{\texttt{virginie.ehrlacher@enpc.fr}} \\
 \\
(D. Gontier) & \textsc{Universit\'e Paris-Dauphine, PSL Research University, CEREMADE} \\
& 75775 Paris, France \\
& \textsl{E-mail address}: \href{mailto:gontier@ceremade.dauphine.fr}{\texttt{gontier@ceremade.dauphine.fr}} \\
\\
  (A. Levitt) & \textsc{Inria Paris and Universit\'e Paris-Est, CERMICS (ENPC)} \\
 &  F-75589 Paris Cedex 12, France\\
 &  \textsl{E-mail address}: \href{mailto:antoine.levitt@inria.fr}{\texttt{antoine.levitt@inria.fr}} \\
  \\
(D. Lombardi) & \textsc{Inria Paris and Sorbonne Universit\'es, UPMC Univ Paris 6, UMR 7598 LJLL} \\
 &  F-75589 Paris Cedex 12, France\\
 &  \textsl{E-mail address}: \href{mailto:damiano.lombardi@inria.fr}{\texttt{damiano.lombardi@inria.fr}} \\
 \\
\end{tabular}

}

\end{document}